
\documentclass[english,a4paper]{amsart}
\pdfoutput=1


\usepackage[T1]{fontenc}
\usepackage{ae}
\usepackage{aecompl}
\usepackage{amssymb}

\input{xy} 
\xyoption{all}
\SelectTips{cm}{}
\usepackage{babel}
\usepackage[pdftex]{graphicx}    
\usepackage[leqno]{amsmath}
\usepackage{amsthm}


\renewcommand\phi{\varphi}
\renewcommand\epsilon{\varepsilon}
\renewcommand\theta{\vartheta}



\newcommand\mbb{\mathbb}

\newcommand\mcal{\mathcal}

\newcommand\ol{\overline}

\newcommand\ul{\underline}
\newcommand\wh{\widehat}
\newcommand\wt{\widetilde}


\newcommand\sH{\mcal{H}}
\newcommand\sI{\mcal{I}}

\newcommand\sO{\mcal{O}}
\newcommand\sP{\mcal{P}}

\newcommand\sS{\mcal{S}}

\newcommand\sV{\mcal{V}}


\newcommand\fm{\mathfrak{m}}

\newcommand\A{\mbb{A}}

\newcommand\N{\mbb{N}}
\renewcommand\P{\mbb{P}}

\newcommand\R{\mbb{R}}
\newcommand\Z{\mbb{Z}}




\renewcommand\div{{\rm div}}

\newcommand\ratmap{\dashrightarrow}

\DeclareMathOperator\Spec{Spec}


\DeclareMathOperator\PO{\rm PO}

\DeclareMathOperator\Sper{\rm Sper}


\newcommand\injects{\hookrightarrow}

\newcommand\into{\rightarrow}

\newcommand\iso{\xrightarrow{\sim}}
\newcommand\isom{\cong}

\renewcommand\le{\leqslant}
\renewcommand\ge{\geqslant}


\numberwithin{equation}{section}

\theoremstyle{plain}
\newtheorem{Thm}[equation]{Theorem}
\newtheorem{Prop}[equation]{Proposition}
\newtheorem{Cor}[equation]{Corollary}
\newtheorem{Lemma}[equation]{Lemma}

\newtheorem*{Thm*}{Theorem}
\newtheorem*{Satz*}{Satz}
\newtheorem*{Prop*}{Proposition}
\newtheorem*{Cor*}{Corollary}
\newtheorem*{Lemma*}{Lemma}
\newtheorem*{Hilfssatz*}{Lemma}
\newtheorem*{Sublemma*}{Sublemma}
\newtheorem*{Conjecture*}{Conjecture}

\theoremstyle{definition}
\newtheorem{Def}[equation]{Definition}

\newtheorem{Example}[equation]{Example}
\newtheorem{Examples}[equation]{Examples}

\newtheorem{Remark}[equation]{Remark}

\newtheorem*{Def*}{Definition}
\newtheorem*{Defs*}{Definitions}
\newtheorem*{Example*}{Example}
\newtheorem*{Examples*}{Examples}
\newtheorem*{LemmaDef*}{Lemma and Definition}
\newtheorem*{Notation*}{Notation}
\newtheorem*{Problem*}{Problem}
\newtheorem*{Question*}{Question}
\newtheorem*{Remark*}{Remark}
\newtheorem*{Remarks*}{Remarks}
\newtheorem*{Warning*}{Warning}

\newtheorem{Text}[equation]{}
\newtheorem*{Text*}{}

\renewcommand\nabla\triangledown


\linespread{1.05}
\addtolength{\textheight}{0.05cm}

\begin{document}

\title{Sums of squares on reducible real curves}
\author{Daniel Plaumann}
\address{Fachbereich Mathematik\\ Universit{\"a}t Konstanz\\ 78457 Konstanz\\ Germany}
\email{Daniel.Plaumann@uni-konstanz.de}
\subjclass[2000]{Primary 14P99; Secondary 11E25, 13J30, 14H99, 14P05}
\keywords{Sums of Squares, Real algebraic curves, Positive Polynomials}
\date{\today}
\begin{abstract}
We ask whether every polynomial function that is non-negative on a
real algebraic curve can be expressed as a sum of squares in the
coordinate ring. Scheiderer has classified all irreducible curves for
which this is the case. For reducible curves, we show how the answer
depends on the configuration of the irreducible components and give
complete necessary and sufficient conditions. We also prove partial
results in the more general case of finitely generated preorderings
and discuss applications to the moment problem for semialgebraic sets.
\end{abstract}

\maketitle
\section*{Introduction}

Let $V$ be an affine variety over $\R$, and let $V(\R)$ be its set of
real points. Can every polynomial function that is non-negative
(\emph{psd}) on $V(\R)$ be expressed as a sum of squares (\emph{sos})
in the coordinate ring $\R[V]$? If this is the case, we say that
\emph{psd$=$sos} in $\R[V]$. It has been known since Hilbert (1888)
that not every real psd polynomial in $n$ variables can be expressed
as a sum of squares of real polynomials, unless $n=1$. On the one
hand, Hilbert's negative result can be generalised to show that
psd$\neq $sos as soon as the dimension of $V(\R)$ is at least $3$; see
Scheiderer \cite{MR1675230}. This leaves the question of psd vs.~sos
for varieties of dimension at most $2$ where the answer depends on the
geometry of the curve or surface in question. On the other hand,
weaker statements can be proved in any dimension: For example, a
famous theorem of Schm\"udgen implies that if $V(\R)$ is compact, then
every strictly positive element of $\R[V]$ (i.e.~every psd function
without real zeros) is a sum of squares; see Schm\"udgen
\cite{MR1092173} or Prestel and Delzell \cite{MR1829790},
Thm.~5.1.17. This and similar statements of an approximative nature
have applications to polynomial optimization and functional
analysis. (General references on this topic are the books of Marshall
\cite{MR2383959}, Prestel and Delzell \cite{MR1829790}, and
Scheiderer's survey \cite{ScheidererSOSGuide}.)

The case when $V$ is one-dimensional (a real algebraic curve) is
completely understood if $V$ is irreducible, i.e.~if $V$ cannot be
expressed as a union of two non-empty curves. Scheiderer has shown
that psd$=$sos in the coordinate ring $\R[C]$ of a non-singular
irreducible real affine curve $C$ if and only if $C$ is rational or if
$C$ admits a non-constant bounded function (a function $f\in\R[C]$,
$f\notin\R$, such that $|f|\le n$ on $C(\R)$ for some $n\in\N$). He
calls the latter property \emph{virtual compactness} since the problem
of psd vs.~sos for such curves always reduces to the compact
situation. The singular case, too, is completely understood. For all
these results, see
Scheiderer \cite{MR1675230} and \cite{MR2020709}. It is the goal of this paper to extend them to the reducible case. 
Motivation, other than a desire for completeness, comes from Schm\"udgen's fibration theorem, which can be combined with our results to study the one- and two-dimensional moment problem of functional analysis.

\medskip

For the purpose of this introduction, let us consider the case of plane curves, which is conceptually simpler. Let $F\in\R[x,y]$ be a square-free polynomial with real coefficients, and let
\[
C(\R)=\bigl\{P\in\R^2\:|\: F(P)=0\bigr\}
\]
be the set of real points of the (affine) plane curve $C$ determined
by $F$. We will use the informal notation $C=\{F=0\}$. If $F=F_1\cdots
F_r$ is the factorization of $F$ in $\R[x,y]$ into its irreducible
factors, the curves $C_i=\{F_i=0\}$ yield the decomposition
$C=C_1\cup\cdots\cup C_r$ of $C$ into its ($\R$-)irreducible
components.  The coordinate ring $\R[C]$ is just the residue class ring $\R[x,y]/(F)$. Write $F=\sum F^{(j)}$ with $F^{(j)}$ homogeneous of degree $j$. If the point $(0,0)$ lies on $C$, recall that $(0,0)$ is called an \emph{ordinary double point} of $C$ if either $F^{(1)}\neq 0$ or $F^{(1)}=0$ and $F^{(2)}$ is a product of two distinct linear factors (up to scalar multiples) in $\R[x,y]$. Geometrically, this means that $(0,0)$ is either a non-singular point of $C$ or a singular point contained in exactly two branches with linearly independent tangents at $(0,0)$. With a change of coordinates, this notion extends to any real point of the plane. 

If $C$ is the union of two irreducible components $C_1$ and $C_2$ that intersect at a real ordinary double point $P$ and have no further intersection points, it is easy to describe polynomial functions on $C$ in terms of polynomial functions on $C_1$ and $C_2$, namely
\[
\R[C]\isom\biggl\{(f,g)\in\R[C_1]\times\R[C_2] \;\biggl|\; f(P)=g(P)\biggr\}.
\]
Now if $(f,g)$ is non-negative on $C(\R)$ and $f=\sum^n f_i^2$ and
$g=\sum^n g_i^2$ are sums of squares in $\R[C_1]$ resp.~$\R[C_2]$,
then $(f,g)=\sum^n (f_i,g_i)^2$ in $\R[C_1]\times\R[C_2]$, and we must
only see to it that $f_i(P)=g_i(P)$ for all $i$. This can easily be
done (Prop.~\ref{Prop:redcurvetransversal}). Thus, for example,
psd$=$sos in $\R[x,y]/(xy)$, which is the coordinate ring of two
intersecting lines. We show that either a curve can be build up
inductively from irreducible (or compact reducible) curves in this
simple way, or else there exists some obstacle that prevents
psd$=$sos. The main result, which completes Scheiderer's
classification of affine curves for which psd$=$sos holds, is the
following:

\begin{Thm*}
Let $C$ be an affine curve over $\R$, and let $C'$ be the union of all irreducible components of $C$ that do not admit any non-constant bounded polynomial function. Then psd$=$sos in $\R[C]$ if and only if the following conditions are satisfied:
\begin{enumerate}
\item All real singularities of $C$ are ordinary multiple points with independent tangents.
\item All intersection points of $C$ are real.
\item All irreducible components of $C'$ are non-singular and rational.
\item The configuration of the irreducible components of $C'$ contains no loops.
\end{enumerate}
\end{Thm*}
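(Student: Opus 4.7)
The plan is to treat necessity and sufficiency of the four conditions separately. Write $C = C_1 \cup \cdots \cup C_r$ as the union of its $\R$-irreducible components and encode the intersection pattern in the dual graph $\Gamma(C)$ with vertices the components and edges the intersection points. The sufficiency rests on inductive gluing via Prop.~\ref{Prop:redcurvetransversal} (in a multi-point form), combined with Scheiderer's classification of the irreducible case, while necessity is established by producing explicit psd but non-sos elements whenever any condition fails.

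For sufficiency, decompose $C = \tilde C \cup C'$, where $\tilde C$ is the union of virtually compact components. On $\tilde C$ alone, psd$=$sos follows from Scheiderer's result for the irreducible components together with the transversal gluing at each ordinary real intersection point allowed by (1) and (2); since the real points of $\tilde C$ are compact, Schm\"udgen-style arguments let the gluing proceed regardless of loops within $\tilde C$. To incorporate $C'$, I exploit that condition (4) makes the subgraph $\Gamma(C')$ a forest. Peel off a leaf component $D$: by (1) and (2) it meets the remainder of $C$ at finitely many real ordinary intersection points, and being a leaf in $\Gamma(C')$ means $D$ meets the rest of $C'$ at only one such point. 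By (3), $D$ is non-singular and rational, so Scheiderer yields psd$=$sos on $\R[D]$. Apply the gluing proposition to combine a sum-of-squares representation on $\R[C\setminus D]$ (known by induction) with one on $\R[D]$, adjusting summands so they match at all common intersection points; iteration on the number of components of $C'$ closes the argument.

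For necessity, argue contrapositively. If (1) fails, a non-ordinary real singularity produces a local obstruction in the completed local ring $\wh\sO_{C,P}$, already shown by Scheiderer to preclude psd$=$sos; this lifts to $\R[C]$ by choosing a psd function supported near $P$. If (2) fails, take a non-real intersection point $P,\bar P$ meeting two components $C_i, C_j$ and construct a psd function that vanishes to sufficient order at $P, \bar P$ on $C_i$ while being zero on $C_j$; any sum-of-squares representation would force each summand to vanish at $P, \bar P$ as elements of $\R[C_i]$, an infeasible divisibility constraint. If (3) fails, Scheiderer's necessity on the bad component of $C'$ yields a psd non-sos element, which extends to $C$ after multiplication by a suitable power of the product of defining equations of the other components. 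If (4) fails, pick a loop in $\Gamma(C')$; the unbounded rational components along the loop are each isomorphic to open subsets of $\A^1$, and one builds a function whose would-be sum-of-squares summands, traced around the cycle via their leading behaviour at the points at infinity of each component, satisfy inconsistent matching conditions.

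The principal obstacle is the loop obstruction in (4). One needs an invariant detecting the cyclic topology of $\Gamma(C')$ that vanishes on sums of squares but is non-zero on a carefully chosen psd element. A natural candidate is a cocycle built from signs and orders of vanishing at the infinite places of each unbounded rational component, summed around the cycle: sum-of-squares summands contribute only even leading terms, forcing the cycle sum to be trivial, whereas a well-chosen psd function contributes non-trivially around the loop. Secondary difficulties include the interface between $\tilde C$ and $C'$ in sufficiency, where intersection points belonging to both parts must receive mutually compatible sum-of-squares representations, and carrying the gluing through multi-point (rather than single-point) intersections of transversal branches in the form needed to reduce to Prop.~\ref{Prop:redcurvetransversal}.
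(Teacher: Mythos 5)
Your high-level decomposition $C=\tilde C\cup C'$ and the leaf-peeling on the forest $\Gamma(C')$ match the paper's strategy, but the order in which you propose to glue contains a real gap. You suggest peeling a leaf $D$ of $\Gamma(C')$ from all of $C$ at once, and then "adjusting summands so they match at all common intersection points." The trouble is that $D$ may meet $\tilde C$ at several points in addition to its single point of contact with the rest of $C'$, so the gluing is genuinely multi-point — and at that stage \emph{neither} side is compact or virtually compact ($D$ has $B(D)=\R$ by definition, and $C\setminus D$ still contains $C'\setminus D$). Prop.~\ref{Prop:redcurvetransversal} only handles the single-point case, and there is no general multi-point version without a compactness hypothesis on one side. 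This is not a "secondary difficulty," as you label it; it is precisely the technical issue that the paper's Prop.~\ref{Thm:SOSOneCompactComp} is designed to resolve. Conversely, the loop obstruction (4) that you flag as "the principal obstacle" is handled in the paper by a rather concrete divisor argument on $\P^1$ within a single component (Thm.~\ref{Thm:sosredcurvnonvc}), and is not where the main subtlety lies.

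The paper's fix for the gluing is to change the order of operations: first establish psd$=$sos on $\R[C']$ \emph{alone}, where leaf-peeling works because each leaf of the forest $\Gamma(C')$ meets the rest of $C'$ at exactly one point (so only Prop.~\ref{Prop:redcurvetransversal} is needed); then glue $\tilde C$ to $C'$ in a single step using Prop.~\ref{Thm:SOSOneCompactComp}, whose proof exploits virtual compactness of $\tilde C$ via a polynomial partition-of-unity lemma to match sos-summands at the finitely many (possibly numerous) real intersection points. Your instinct that "Schm\"udgen-style arguments" make the compact side flexible is correct; you just need to invoke that flexibility at the interface between $\tilde C$ and $C'$, not within $C'$.

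On necessity, your sketch for (1) and (2) is in line with the paper. For (3), the mechanism you describe ("extends to $C$ after multiplication by a suitable power of the product of defining equations of the other components") is not reliable — multiplying a psd non-sos element by a square need not preserve the non-sos property. The paper instead proves that psd$=$sos passes to closed subcurves (Prop.~\ref{Prop:psdsos_passestoirrcomp}), via a lifting lemma that uses the hypothesis that intersection points are ordinary multiple points; necessity of (3) then follows by applying the irreducible result to each component of $C'$. For (4), your "cocycle of signs and vanishing orders around the loop" is the right intuition; the paper makes it concrete by reducing to a single rational component $C_i$ meeting the rest at $r\ge 2$ real points, constructing $f\in\R[C_i]$ with $f(P_j)=(-1)^j$ and a prescribed divisor on $\P^1_R$, and using the valuation inequality of Lemma~\ref{Cor:ValuationSOS} at the points at infinity to force every summand in an sos-decomposition of $f^2$ in the subring $A=\{g:g(P_1)=\dots=g(P_r)\}$ to be a scalar multiple of $f$ — which is impossible since $f\notin A$.

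Finally, note that you also need to dispose of non-real components: the paper first reduces to the case where $C$ is real via Cor.~\ref{Prop:Nonrealcomponents}, a step not mentioned in your outline.
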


Here, an ordinary multiple point with independent tangents is just the proper higher-dimensional analogue of an ordinary double point. The last condition will be made precise in section \ref{Sec:SOSCurves}. Some examples:

\begin{enumerate}
\item Let $C=\{xy(1-x-y)=0\}$, three lines forming a triangle. This
constitutes the kind of loop that condition (4) of the theorem
forbids.  For a concrete example of a psd function that is not a sum
of squares in $\R[C]$, write $\R[C]\isom\bigl\{(f,g,h)\in \R[t]\times
\R[u]\times \R[v]\:\bigl|\: f(0)=g(1), f(1)=h(0), g(0)=h(1)\bigr\}$,
and put $f=2t-1$, $g=2u-1$, $h=2v-1$. Then $(f^2,g^2,h^2)$ is an
element of $\R[C]$ which is clearly psd. But if $f^2=\sum f_i^2$, with
all $f_i\neq 0$, then $f_i\bigl(\frac 12\bigr)=0$ and $\deg f_i=1$, so
$f_i=a_i f$ with constants $a_i\in\R$ for all $i$. The same is true
for $g$ and $h$. Therefore, if we had $(f^2,g^2,h^2)=\sum s_i^2$ in
$\R[C]$, then each $s_i$ would have to be given as $s_i=(a_if,b_i
g,c_i h)$ with constants $a_i,b_i,c_i\neq 0$. Such a triple can never
define a function on $C$, for if $a_if(0)$ and $b_ig(1)$ have the same
sign and $a_if(1)$ and $c_ih(0)$ have the same sign, then $b_ig(0)$
and $c_ih(1)$ will have different signs.

\item Consider the family of curves $C_a=\{(y-x^2)(y-a)=0\}$ for
$a\in\R$, the union of a parabola and a line. Again, psd$\neq$sos in
$\R[C_a]$ for any value of the parameter $a$, but for varying reasons:
If $a<0$, then the parabola and the line intersect at a pair of
distinct complex-conjugate points so that condition (2) of the theorem
is not met. For $a=0$, the intersection of the line and the parabola
in the origin is not an ordinary double point, violating condition
(1). And if $a>0$, the line and the parabola intersect at two distinct
real points which violates condition (4). (In all three cases, one can
argue directly in a similar way as in the previous example.)

\item For a positive example, let $C$ be the curve $\{(x^2+y^2-1)y=0\}$, a line intersecting a circle in two distinct real points. Since the real points of the circle $\{x^2+y^2=1\}$ are compact, condition (4) is empty, and psd$=$sos in $\R[C]$.
\end{enumerate}

A brief overview of the structure of this paper: After a few preliminaries in section \ref{sec:somefacts}, we introduce general techniques for dealing with sums of squares on reducible varieties, that are not peculiar to curves, in section \ref{Sec:Generalities}. But even if we assume that irreducible components intersect at only finitely many points, there is not much to be said here in complete generality. The most useful results are the basic Prop.~\ref{Prop:redcurvetransversal}, as well as Prop.~\ref{Thm:SOSOneCompactComp} which deals with a union of two subvarieties one of which is assumed compact. Section \ref{Sec:SOSCurves} is the longest, mostly devoted to the proof of the above theorem (Thm.~\ref{MainResult}) in several steps. In section \ref{Sec:POCurves}, we look at possible generalisations to the case of finitely generated preorderings in place of sums of squares. We give sufficient conditions for a reasonably large class of examples (Prop.~\ref{Prop:SatPORedCurv}), but many cases remain open. In section \ref{Sec:MP}, we briefly explain how our results can be applied to the moment problem of functional analysis.

\medskip

\emph{Acknowledgements:} This paper has developed from a part of my
PhD thesis \cite{MeineDiss}. I want to thank my advisor Claus
Scheiderer for introducing me to the questions considered here as well
as for his insights and suggestions. I am grateful to Michel
Coste for pointing out to me a simplification of the combinatorics in section
\ref{Sec:SOSCurves}. I also wish to thank the referee for valuable
remarks and suggestions.

\section{Preliminaries}\label{sec:somefacts}

In this section, we fix notations and briefly discuss some notions from real algebra and geometry, as well as some general facts concerning reducible algebraic varieties and curves. From algebraic geometry, we need only basic concepts and results, but because of the real ground field and a few non-reduced phenomena, it is convenient to work with schemes.  

\medskip
Let always $k$ be a field and $R$ a real closed field, for example the field of real numbers denoted by $\R$. 

\begin{Text} A \emph{variety} over $k$ is a reduced separated scheme
of finite type over $\Spec(k)$, not necessarily irreducible. A
\emph{curve} is a variety all of whose irreducible components have
(Krull-)dimension $1$. If $V$ is a variety over $k$ and $K/k$ a field
extension, we denote by $V(K)$ the set of $K$-valued points of $V$. We
frequently consider $V(k)$ as a subset of $V$ by identifying $V(k)$
with the set of points of $V$ with residue field $k$. We use the notation $k[V]$ for the coordinate ring $\sO_V(V)$ of an affine $k$-variety $V$.
\end{Text}

\begin{Text}
Let $V$ be a variety over $R$. Any closed point $P\in V$ has residue
field $\kappa(P)=R$ or $\kappa(P)=R(\sqrt{-1})$. In the first case,
the point $P$ is called \emph{real}, in the second \emph{non-real}.
The set $V(R)$ of real points is equipped with the semialgebraic
topology, induced by the ordering of $R$, which is the euclidean
topology if $R=\R$. Unless explicitly stated otherwise, topological
statements about subsets of $V(R)$ will refer to that topology.  If
$C$ is a non-singular curve over $R$, a \emph{divisor} on $C$ is any
finite $\Z$-linear combination of closed points. The \emph{degree} of
a divisor $\sum_{P\in C} n_P P$ is defined as $\sum n_P[\kappa(P):R]$;
in other words, non-real points are counted with multiplicity $2$. If
$C$ is irreducible and $P\in C$ is a closed point, we denote by ${\rm
  ord}_P$ the discrete valuation of the function field $R(C)$
corresponding to the order of vanishing at $P$. For a rational
function $f\in R(C)$, we write $\div_C(f)$ for the divisor $\sum_{P\in
  C} {\rm ord}_P(f)P$ of zeros and poles of $f$.
\end{Text}

\begin{Def}
Let $V$ be an affine $R$-variety, and let $V_1$, $V_2$ be closed subvarieties of $V$. Assume that $V_1$ and $V_2$ intersect at only finitely many points $P_1,\dots,P_r$ of $V$. We say the intersection of $V_1$ and $V_2$ is \emph{transversal} or that $V_1$ and $V_2$ intersect transversally if $\sI_{V_1}+\sI_{V_2}=\bigcap_{i=1}^r\fm_{P_i}$.
\end{Def}

Here, $I_W$ denotes the vanishing ideal of a subvariety $W$ in $R[V]$, and $\fm_P$ denotes the vanishing ideal of a point $P$. 

\begin{Lemma}\label{Lemma:FunctionsOnTransversalIntersection}
If $V$ is the union of two closed subvarieties $V_1$ and $V_2$ that intersect at finitely many points $P_1,\dots,P_r$, then the intersection of $V_1$ and $V_2$ is transversal if and only if the diagonal homomorphism 
\[
R[V]\into\bigl\{(f,g)\in R[V_1]\times R[V_2]\:|\: \forall i\in\{1,\dots,r\}\colon f(P_i)=g(P_i)\bigr\}
\]
given by $f\mapsto (f|_{V_1},f|_{V_2})$ is an isomorphism.
\end{Lemma}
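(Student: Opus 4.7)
My plan is to build the short exact sequence of Mayer--Vietoris type
\[
0 \to R[V] \to R[V_1] \times R[V_2] \to R[V]/(\sI_{V_1} + \sI_{V_2}) \to 0,
\]
where the first map is $f \mapsto (f|_{V_1}, f|_{V_2})$ and the second is $(f,g) \mapsto \ol{f-g}$. Exactness in the middle is a general fact, and injectivity on the left follows because $V = V_1 \cup V_2$ and $V$ is reduced, so the vanishing ideal $\sI_V = \sI_{V_1} \cap \sI_{V_2}$ is zero in $R[V]$. Surjectivity on the right holds because the composite $R[V_1] \hookrightarrow R[V_1] \times R[V_2] \to R[V]/(\sI_{V_1} + \sI_{V_2})$ factors the surjection $R[V] \epi R[V]/(\sI_{V_1}+\sI_{V_2})$.

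Next I identify the subring of compatible pairs in terms of this sequence. Since each $P_i$ lies in $V_1 \cap V_2$, both $\sI_{V_1}$ and $\sI_{V_2}$ are contained in $\fm_{P_i}$, so $\sI_{V_1} + \sI_{V_2} \subseteq \bigcap_i \fm_{P_i}$ and the ideal $J := (\bigcap_i \fm_{P_i})/(\sI_{V_1} + \sI_{V_2})$ of $R[V]/(\sI_{V_1}+\sI_{V_2})$ makes sense. By construction, a pair $(f,g) \in R[V_1] \times R[V_2]$ satisfies $f(P_i) = g(P_i)$ for all $i$ exactly when its image $\ol{f-g}$ lies in $J$. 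Hence
\[
\bigl\{(f,g) \in R[V_1] \times R[V_2] \:\bigl|\: f(P_i) = g(P_i) \text{ for all } i\bigr\}/R[V] \;\isom\; J.
\]

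Combining these observations, the diagonal homomorphism takes values in the compatible pairs, is always injective, and is surjective onto them if and only if $J = 0$, i.e.\ $\bigcap_i \fm_{P_i} = \sI_{V_1} + \sI_{V_2}$, which is precisely the definition of transversality. The only real obstacle is the bookkeeping in setting up the exact sequence and checking that the compatible-pair subring is correctly identified with the preimage of $J$; once that is in place, the equivalence is immediate.
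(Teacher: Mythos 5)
Your proof is correct and is essentially the same as the paper's, just packaged in the language of a Mayer--Vietoris exact sequence: the paper also notes injectivity from $\sI_{V_1}\cap\sI_{V_2}=(0)$, characterizes the image of the diagonal as pairs with $f-g\in\sI_{V_1}+\sI_{V_2}$, compares this with the condition $f-g\in\bigcap_i\fm_{P_i}$, and concludes. Your quotient $J$ is a slightly more conceptual repackaging of the same comparison, but the underlying computation is identical.
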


\begin{proof}
Let $\phi$ be the map $R[V]\into R[V_1]\times R[V_2]$ given by $f\mapsto (f|_{V_1},f|_{V_2})$. We have $\sI_{V_1}\cap\sI_{V_2}=(0)$, because $V$ is the union of $V_1$ and $V_2$, hence $\phi$ is injective. The image of $\phi$ consists of all elements $(f|_{V_1},g|_{V_2})\in R[V_1]\times R[V_2]$, $f,g\in R[V]$, such that $f-g\in\sI_{V_1}+\sI_{V_2}$. On the other hand, $(f,g)\in R[V]\times R[V]$ satisfies $f(P_i)=g(P_i)$ for all $i\in\{1,\dots,r\}$ if and only if $f-g\in\bigcap_{i=1}^r\fm_{P_i}$. Thus the image of $\phi$ has the desired form if and only if $\sI_{V_1}+\sI_{V_2}=\bigcap_{i=1}^r\fm_{P_i}$ which proves the claim.
\end{proof}

\begin{Text} Let $C$ be a curve over $k$. 
A closed point $P\in C$ with residue field $K$ is called
an \emph{ordinary multiple point with independent tangents} if the
completed local ring $\wh{\sO}_{C,P}$ is isomorphic to
$K[[x_1,\dots,x_n]]/(x_ix_j\:|\:1\le i<j\le n)$ for some
$n$.\\
Every non-singular point on a curve is an ordinary multiple point with
independent tangents ($n=1$). A point on a plane curve is an ordinary multiple
point with independent tangents if and only if it is an ordinary
double point as defined in the introduction ($n=2$). 
\end{Text}

\begin{Lemma}\label{Lemma:OMPIT}
For any field $K$ and any $n\ge 1$, there is an isomorphism
\begin{align*}
& K[[x_1,\dots,x_n]]/(x_ix_j\:|\:1\le i< j\le n)\\
\isom\; &\bigl\{(f_1,\dots,f_n)\in K[[t_1]]\times\cdots\times K[[t_n]]\:\bigl|\:
f_1(0)=\cdots=f_n(0)\bigr\}.
\end{align*}
given by the the map 
$
\phi\colon K[[x_1,\dots,x_n]]/(x_ix_j\:|\:i,j)\longrightarrow \prod_i K[[t_i]]
$
that sends the class of $f\in K[[x_1,\dots,x_n]]$ to $\bigl(f(t_1,0,\dots,0),\dots,f(0,\dots,0,t_n)\bigr)$,
\end{Lemma}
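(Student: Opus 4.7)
The plan is to exhibit explicit $K$-vector space decompositions of both rings and show that $\phi$ matches them.

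First I would decompose the source. Any element of $K[[x_1,\dots,x_n]]$ has a unique monomial expansion $\sum_\alpha a_\alpha x^\alpha$, and the ideal $I=(x_ix_j : 1\le i<j\le n)$ is a monomial ideal: it consists precisely of the series whose expansion is supported on multi-indices $\alpha$ with at least two nonzero entries. Modulo $I$, every element is therefore uniquely represented by $c+\sum_{i=1}^n g_i(x_i)$ with $c\in K$ and $g_i\in x_iK[[x_i]]$, yielding a $K$-module decomposition
\[
A:=K[[x_1,\dots,x_n]]/(x_ix_j : i<j)\isom K\oplus\bigoplus_{i=1}^n x_iK[[x_i]].
\]
The target ring admits a parallel decomposition: an element $(f_1,\dots,f_n)$ of $B:=\{(f_1,\dots,f_n)\in\prod K[[t_i]] : f_1(0)=\cdots=f_n(0)\}$ is determined by the common constant value $c:=f_1(0)$ together with the tails $f_i-c\in t_iK[[t_i]]$, giving $B\isom K\oplus\bigoplus_{i=1}^n t_iK[[t_i]]$.

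Next I would verify that $\phi$ is a well-defined ring homomorphism. It is the product of the $n$ evaluation maps $\epsilon_i:K[[x_1,\dots,x_n]]\to K[[t_i]]$ sending $x_i\mapsto t_i$ and $x_j\mapsto 0$ for $j\neq i$; each $\epsilon_i$ is a continuous ring homomorphism for the $\fm$-adic topology, and each generator $x_jx_k$ of $I$ is annihilated by every $\epsilon_i$ (since at least one of $x_j,x_k$ lies in the kernel). Hence the product factors through $A$, and the image lies in $B$ because all components share the constant term $f(0,\dots,0)$.

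Finally I would match the two decompositions: $\phi$ sends $c\in K$ to $(c,\dots,c)$ and sends $x_i^m$ (for $m\ge 1$) to the tuple with $t_i^m$ in the $i$-th slot and zeros elsewhere. Extending $K$-linearly, the image of $c+\sum_i g_i(x_i)$ is $(c+g_1(t_1),\dots,c+g_n(t_n))$, which is a $K$-linear bijection between the two decompositions. Being a ring homomorphism and a bijection, $\phi$ is an isomorphism. There is no real obstacle here; the whole content of the lemma is organizational, and the only point requiring any care is recognizing $I$ as the span of the monomials whose support has size at least two, which is standard for monomial ideals (and survives passage to power series since we may split any element of $I$ by collecting monomials according to which generator $x_jx_k$ divides them).
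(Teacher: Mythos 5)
Your proof is correct and is essentially the same argument as the paper's, just written out in more detail. The paper asserts well-definedness and injectivity and then proves surjectivity by exhibiting the explicit preimage $f_1(x_1)+\cdots+f_n(x_n)-(n-1)f_1(0)$ of $(f_1,\dots,f_n)$; this is exactly the inverse of your matched decomposition $c+\sum_i g_i(x_i)\leftrightarrow(c+g_1(t_1),\dots,c+g_n(t_n))$. Your observation that the monomial-ideal description of $I$ survives passage to power series (by sorting monomials according to a chosen generator dividing them) is the content the paper leaves implicit under ``one checks that $\phi$ is ... injective,'' and it is the one step genuinely worth spelling out.
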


\begin{proof}
One checks that $\phi$ is well-defined and injective. To see
that the image is as claimed, let $(f_1,\dots,f_n)$ be an element of the
right-hand side such that $f_1(0)=\cdots=f_n(0)$. Then
$\phi\bigl(f_1(x_1)+\cdots+f_n(x_n)-(n-1)f_1(0)\bigr)=(f_1,\dots,f_n)$.
\end{proof}

\begin{Cor}\label{Cor:FnOnRedCurves}
Let $C$ be a curve with irreducible components $C_1,\dots,C_m$ over a field $k$. Assume that all singularities of $C_1,\dots,C_m$ are ordinary multiple points with independent tangents. The following are equivalent:

\begin{enumerate}
\item All singularities of $C$ are ordinary multiple points with
independent tangents.
\item  The closed subcurves $C_i$ and $C_i'=\bigcup_{j\neq i}
C_j$ intersect transversally for every $i=1,\dots,m$.
\item The diagonal homomorphism $k[C]\into\prod_{i=1}^m k[C_i]$
given by $f\mapsto (f|_{C_i})_{i=1,\dots,m}$ induces an isomorphism between $k[C]$ and
\[
\bigl\{(f_i)\in\prod_ik[C_i]\:\bigl|\: f_i(P)=f_j(P)\text{
for all }P\in C_i\cap C_j,\ i,j=1,\dots,m\bigr\}.
\]
(where $C_i\cap C_j$ is the usual, set-theoretic intersection).
\end{enumerate}
\end{Cor}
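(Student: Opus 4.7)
My plan is to split the equivalence into $(2) \Leftrightarrow (3)$, handled by induction on $m$ via Lemma~\ref{Lemma:FunctionsOnTransversalIntersection}, and $(1) \Leftrightarrow (2)$, handled by a local analysis at each point via Lemma~\ref{Lemma:OMPIT}. The first equivalence is really a general statement about how subvarieties combine and does not use the singularity hypothesis on the $C_i$. Throughout I write $C_1' = C_2 \cup \cdots \cup C_m$.

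For $(2) \Leftrightarrow (3)$, the key preliminary is that (2) for $C$ implies (2) for $C_1'$, and likewise for (3). Descent of (2) is a local check at each $P$: transversality of $C_i$ and $C_i' = \bigcup_{j \neq i} C_j$ in $C$ amounts to
\[
\sI_{C_i} + \Bigl(\sI_{C_1} \cap \bigcap_{j \neq i,\, j \ge 2}\sI_{C_j}\Bigr) = \fm_P
\]
in $\sO_{C,P}$, and upon passing to $\sO_{C_1',P} = \sO_{C,P}/\sI_{C_1'}$ the factor $\sI_{C_1}$ is absorbed because $\sI_{C_1'} = \bigcap_{j \ge 2}\sI_{C_j} \subseteq \bigcap_{j \neq i,\, j \ge 2}\sI_{C_j}$. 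Descent of (3) uses that given a compatible tuple $(f_i)_{i \ge 2}$ one can always prescribe a matching $f_1 \in k[C_1]$ at the finitely many points of $C_1 \cap C_1'$ by the Chinese Remainder Theorem, after which (3) for $C$ produces the desired global element. With these in hand, Lemma~\ref{Lemma:FunctionsOnTransversalIntersection} applied to $C = C_1 \cup C_1'$, combined with (2) (resp.~(3)) for $C_1'$, produces (3) (resp.~(2)) for $C$.

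For $(1) \Leftrightarrow (2)$, fix a closed point $P$ with residue field $K = \kappa(P)$ and set $S = \{i : P \in C_i\}$. By the singularity hypothesis on each $C_i$, Lemma~\ref{Lemma:OMPIT} presents $\wh\sO_{C_i,P}$ as the ring of tuples $(g_j^{(i)})_{j=1}^{n_i} \in \prod_j K[[t_j^{(i)}]]$ with common constant term. Assuming (2), and hence (3), completing at $P$ identifies $\wh\sO_{C,P}$ with the fiber product over $K$ of the $\wh\sO_{C_i,P}$ for $i \in S$ (components not through $P$ contribute nothing to the completion); substituting the tuple descriptions collapses this into the single ring $\{(g_l)_{l=1}^n : g_l(0)\text{ all equal}\}$ with $n = \sum_{i \in S} n_i$, which is of the form required by (1) via Lemma~\ref{Lemma:OMPIT}. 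Conversely, given (1), use Lemma~\ref{Lemma:OMPIT} to present $\wh\sO_{C,P}$ in tuple form and let $B_i \subseteq \{1,\ldots,n\}$ index the branches belonging to $C_i$; then $\sI_{C_i}$ and $\sI_{C_i'}$ consist of tuples whose components vanish on $B_i$ and outside $B_i$ respectively, and any $(u_l) \in \fm_P$ splits as $u_l \mathbf{1}_{l \notin B_i} + u_l \mathbf{1}_{l \in B_i}$ to show $\sI_{C_i} + \sI_{C_i'} = \fm_P$. Faithful flatness of completion descends this to $\sO_{C,P}$, and a local-to-global argument at the finite intersection locus yields (2).

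The main obstacle I anticipate is the careful bookkeeping of ambient rings in the inductive step of $(2) \Leftrightarrow (3)$, in particular the verification that both conditions descend cleanly to $C_1'$. Once this is in place, the rest is essentially a formal application of Lemmas~\ref{Lemma:FunctionsOnTransversalIntersection} and~\ref{Lemma:OMPIT}.
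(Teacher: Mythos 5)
Your proof is correct in outline and rests on the same two pillars as the paper's, Lemma~\ref{Lemma:FunctionsOnTransversalIntersection} and Lemma~\ref{Lemma:OMPIT}, but it organizes the equivalences differently. The paper proves $(2)\Leftrightarrow(3)$ by declaring it ``immediate'' from Lemma~\ref{Lemma:FunctionsOnTransversalIntersection}, and then establishes $(1)\Leftrightarrow(3)$ directly: assuming~(1), one checks that the diagonal map becomes an isomorphism after completion at every maximal ideal (using Lemma~\ref{Lemma:OMPIT} together with the Bourbaki criterion that a map of noetherian rings which is an isomorphism on all completions is an isomorphism), and assuming~(3) one reads off~(1) pointwise from Lemma~\ref{Lemma:OMPIT}. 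You instead prove $(1)\Leftrightarrow(2)$ by the local completion analysis and $(2)\Leftrightarrow(3)$ by an explicit induction on $m$ that peels off $C_1$. Your treatment of $(2)\Leftrightarrow(3)$ is actually more careful than the paper's one-liner: the reduction to the two-subvariety Lemma~\ref{Lemma:FunctionsOnTransversalIntersection} genuinely requires an induction of the sort you give, together with the descent steps you describe, and your observation that this equivalence is independent of the singularity hypothesis on the $C_i$ is a correct and worthwhile point. What the paper's route buys is economy: the completion criterion makes $(1)\Rightarrow(3)$ a one-step local-to-global argument without explicit induction.

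One place where your write-up is a little too terse to stand as is: in $(3)\Rightarrow(2)$ you apply Lemma~\ref{Lemma:FunctionsOnTransversalIntersection} to $C = C_1 \cup C_1'$, which yields transversality of $C_1$ and $C_1'$ in $C$, i.e.~condition~(2) at index $i=1$ only. For $i\ge 2$, the inductive hypothesis gives transversality of $C_i$ with $\bigcup_{j\ge 2,\,j\neq i}C_j$ \emph{inside} $C_1'$, which is not the same as transversality of $C_i$ with $C_i'$ inside $C$. This is easily repaired by observing that the whole peeling argument (including the descent of~(3)) is symmetric in the index, so one may redo it with $C_i$ in the role of $C_1$; but you should say this explicitly rather than leaving it to ``(2) for $C_1'$ yields (2) for $C$.'' With that addition, and the local verification of your descent claims (which is straightforward), the argument is complete.
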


\begin{proof}
The equivalence of (2) and (3) is immediate from Lemma \ref{Lemma:FunctionsOnTransversalIntersection}. Assume that (1) holds and write $\phi$ for the diagonal homomorphism $k[C]\into\prod_{i=1}^m k[C_i]$. For any closed point $P\in C$, consider the induced map $\phi_P\colon{\wh\sO}_{C,P}\into\prod_i{\wh\sO}_{C_i,P}$ on completed local rings (note that ${\wh\sO}_{C_i,P}=0$ if $P\notin C_i$). Since $P$ is an ordinary multiple point with independent tangents, it follows from Lemma \ref{Lemma:OMPIT} that $\phi_P$ is an isomorphism. So $\phi$ is an isomorphism after completion with respect to any maximal ideal of $k[C]$, hence it is an isomorphism (see for example Bourbaki \cite{MR0171800}, \S3, no.~5, Cor.~5). 

Conversely, if (3) holds and $P\in C$ is any closed point, then $P$ is an ordinary multiple point with independent tangents on each irreducible component of $C$ passing through $P$ by assumption, hence by Lemma \ref{Lemma:OMPIT}, $P$ must also be an ordinary multiple point with independent tangents on $C$.
\end{proof}

\begin{Text}
A field is called \emph{real} if it possesses an ordering, i.e.~a linear order of the set $K$ that respects addition and multiplication. A valuation $v\colon K^{\times}\into\Gamma$ of a field $K$ is called \emph{real} if its residue field is real. The following simple lemma is of fundamental importance to the study of sums of squares on real varieties. It generalises the observation that ``leading terms cannot cancel'' in a sum of squares of real polynomials.

\end{Text}

\begin{Lemma}\label{Cor:ValuationSOS}
Let $K$ be a field equipped with a real valuation $v$, and let
$a_1,\dots,a_r\in K$. Then
$v(a_1^2+\cdots+a_r^2)=2\min\{v(a_1),\dots,v(a_r)\}$.
\end{Lemma}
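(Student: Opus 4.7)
The plan is to separate the two standard inequalities. The bound
$v\bigl(\sum a_i^2\bigr)\ge 2\min_i v(a_i)$ follows from the general valuation inequalities $v(x+y)\ge\min\{v(x),v(y)\}$ and $v(xy)=v(x)+v(y)$, applied to the $r$ summands $a_i^2$. The content of the lemma is that equality holds, i.e.\ there is no ``cancellation of leading terms.'' We may assume not all $a_i$ are zero, so $m:=\min_i v(a_i)$ lies in the value group $\Gamma$.

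To rule out cancellation I would renormalize. Choose any $t\in K^\times$ with $v(t)=m$ (for instance, an $a_j$ attaining the minimum), and set $u_i:=a_i/t$. Then each $u_i$ lies in the valuation ring $\sO_v$, and after reindexing we may assume $v(u_1)=\cdots=v(u_s)=0$ and $v(u_i)>0$ for $i>s$, where $s\ge 1$. Writing $\bar{u}_i\in\kappa(v)$ for the residue, we have $\bar{u}_1,\dots,\bar{u}_s\in\kappa(v)^\times$ and $\bar{u}_i=0$ for $i>s$. Since
\[
\sum_{i=1}^r a_i^2 \;=\; t^2\sum_{i=1}^r u_i^2,
\]
the claim $v\bigl(\sum a_i^2\bigr)=2m$ reduces to $v\bigl(\sum u_i^2\bigr)=0$, i.e.\ to the statement that the residue $\bar{u}_1^2+\cdots+\bar{u}_s^2\in\kappa(v)$ is nonzero.

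The final step is then a pure statement about formally real fields: if $K$ is a real (formally real) field and $y_1,\dots,y_s\in K$ with some $y_j\neq 0$, then $y_1^2+\cdots+y_s^2\neq 0$. Otherwise one could write $-1=\sum_{i\neq j}(y_i/y_j)^2$, contradicting the existence of an ordering on $K$. Applying this to $y_i=\bar{u}_i$ in the real residue field $\kappa(v)$ (with $\bar{u}_1\neq 0$) gives $\sum\bar{u}_i^2\neq 0$, and hence $v\bigl(\sum u_i^2\bigr)=0$, as required. The only point that requires care is the reduction to a nonzero sum of squares in $\kappa(v)$, but this is clean once the $u_i$ are normalized so that at least one residue is a unit.
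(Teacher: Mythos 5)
Your proof is correct, and it takes a genuinely different route from the paper's. The paper invokes the Baer--Krull theorem to produce an ordering of $K$ compatible with $v$, and then argues directly: compatibility means $0\le x\le y$ implies $v(x)\ge v(y)$, so $\sum_j a_j^2\ge a_i^2$ gives $v(\sum_j a_j^2)\le 2v(a_i)$ for every $i$, while the reverse inequality is the standard ultrametric one. You instead normalize by an element of minimal value, pass to the residue field $\kappa(v)$, and use only that a sum of squares of elements of a formally real field, not all zero, is nonzero. Your argument is more elementary in the sense that it avoids Baer--Krull (a nontrivial existence theorem for compatible orderings) and uses nothing beyond the definition of a real field and basic valuation arithmetic; it is essentially the classical ``no cancellation of leading terms'' argument made precise. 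The paper's version is shorter on the page once Baer--Krull is granted, and it foregrounds the order-theoretic picture that recurs elsewhere in the paper, but both proofs are clean and yield exactly the same statement. One small presentational point: you should say explicitly at the outset that if all $a_i=0$ both sides equal $v(0)=\infty$, so the normalization step (choosing $t$ with $v(t)=m<\infty$) is legitimate; you do gesture at this, and the rest is airtight.
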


\begin{proof}
By the Baer-Krull theorem (see for example Bochnak, Coste, and Roy
\cite{MR1659509}, Thm.~10.1.10), there exists an ordering of $K$ that
is compatible with the valuation $v$. Then $\sum_j a_j^2\ge a_i^2$ in $K$ implies $v(\sum_j a_j^2)\le 2v(a_i)$ for all $i$. The opposite inequality holds trivially.
\end{proof}

\begin{Text}
An irreducible variety $V$ over $R$ is called \emph{real} if the following equivalent conditions are satisfied: (1) $V(R)$ is Zariski-dense in $V$. (2) $V(R)$ contains a non-singular point of $V$. (3) The semialgebraic dimension of $V(R)$ coincides with the (Krull-) dimension of $V$. (4) The function field $R(V)$ is real (see \cite{MeineDiss}, Prop.~A.1 or Bochnak, Coste, and Roy \cite{MR1659509}, section 7.6). A reducible variety over $R$ is called real if all its irreducible components are real.
\end{Text}

\begin{Text}\label{Def:RBF}
Let $V$ be an affine $R$-variety, and let $S$ be a semialgebraic subset of $V(R)$. We write
\[
B_V(S)=\bigl\{f\in R[V]\:|\: \exists\lambda\in R\;\forall x\in S\colon |f(x)|\le\lambda\bigr\}
\]
for the ring of bounded functions on $S$. Its size can be seen as a measure for the ``compactness'' of $S$. Mostly, we will be interested in the case $S=V(R)$ for which we use the notation
\[
B(V)=B_V(V(\R)).
\]
We collect what we will need about rings of bounded functions on
curves in the following lemma:
\end{Text}

\begin{Lemma}\label{Lemma:VirtualCompactness}
Let $C$ be an affine curve with irreducible components $C_1,\dots,C_m$ over
$R$. Let $S$ be a semialgebraic subset of $C(R)$ and write $S_i=S\cap
C_i(R)$.

\begin{enumerate}
\item There exists an open-dense embedding of $C$ into a projective curve $X$ such that the finitely many points of $X\setminus C$ are non-singular. The embedding $C\injects X$ is unique up to isomorphism. 
\item Let $T$ be the union of all points of $X\setminus C$ that are
either non-real, or real but not contained in the closure of
$S$ in $X(R)$. Put $\wt C=X\setminus T$. Then $B_C(S)\isom\sO_{\wt{C}}(\wt C)$.
\item If $C$ is irreducible, then $B_C(S)\neq R$ if and only if
$T\neq\emptyset$. 
\item If $B_{C_i}(S_i)\neq R$ for all $i=1,\dots,m$, then $\wt C\isom\Spec(B_C(S))$. 
\item If $B_{C_j}(S_j)\neq R$ for some $j\in\{1,\dots,m\}$, there
exists a non-constant element $f\in B_C(S)$ such that $f|_{C_i}=0$ for
all $i\neq j$. In particular, if $C$ is connected, then $B_C(S)=R$ if
and only if $B_{C_i}(S_i)=R$ for all $i=1,\dots,m$. 
\item If $h\in B_C(S)$ vanishes at all points of $\wt C\setminus C$, then for every $f\in R[C]$ there exists $N\ge 0$ such that $h^Nf\in B_C(S)$.
\item If $B_C(S)=R$ and $f_1,\dots,f_r\in R[C]$ are such that $f_1^2+\cdots+f_r^2\in R$, then $f_1,\dots,f_r\in R$.
\end{enumerate}
\end{Lemma}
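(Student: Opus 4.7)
The plan is to prove (7) by a direct evaluation-and-estimate argument that exploits the real closedness of $R$. Set $c := f_1^2+\cdots+f_r^2$, which by hypothesis lies in $R\subseteq R[C]$. For every real point $x\in C(R)$, evaluation yields
\[
f_1(x)^2+\cdots+f_r(x)^2 \;=\; c
\]
in $R$. Since $R$ is ordered, each summand on the left is nonnegative, so $c\ge 0$ and in particular $f_i(x)^2\le c$ for every $i$. Because $R$ is real closed, $\sqrt{c}\in R$, and taking square roots gives the pointwise bound $|f_i(x)|\le\sqrt{c}$ for every $x\in C(R)$, hence \emph{a fortiori} for every $x\in S$.

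This shows that each $f_i$ lies in $B_C(S)$. Invoking the hypothesis $B_C(S)=R$, we conclude $f_i\in R$ for all $i$, which is the assertion.

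One small point worth checking: the evaluation step above is vacuous if $S=\emptyset$, in which case $B_C(S)=R[C]$ by the definition of bounded functions. But then $B_C(S)=R$ would force $R[C]=R$, contradicting the fact that $C$ is a curve (so $R[C]$ has Krull dimension $1$). Hence the hypothesis forces $S\neq\emptyset$, and the argument applies. There is really no obstacle here; the statement amounts to the elementary fact that in a real closed field a sum of squares dominates each of its summands, so the assertion is essentially a tautology once bounded functions on $S$ are characterized pointwise.
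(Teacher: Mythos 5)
Your proof is correct and takes a genuinely different, and in fact considerably more elementary, route than the paper's. The paper first reduces to the irreducible case via part (5), then observes via part (3) that all points of $X\setminus C$ must be real, applies the real-valuation lemma (Lemma~\ref{Cor:ValuationSOS}, the ``leading terms cannot cancel'' principle) to conclude that each $f_i$ satisfies ${\rm ord}_P(f_i)\ge 0$ at every point $P$ of $X\setminus C$, and finishes by invoking the fact that a rational function with no poles on a complete curve is constant. You instead argue pointwise: from $\sum_j f_j(x)^2 = c\in R$ at every real point $x$, each term $f_i(x)^2$ is bounded by $c$, and real closedness of $R$ gives the uniform bound $\abs{f_i(x)}\le\sqrt{c}$; hence $f_i\in B_C(S)=R$ by the very definition of $B_C(S)$. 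This sidesteps valuation theory, the compactification, and the other parts of the lemma entirely, at the price of needing to dispatch the degenerate case $S=\emptyset$ (which you do correctly: $S=\emptyset$ would give $B_C(S)=R[C]$, and $R[C]=R$ is impossible for a curve). Your approach buys brevity and self-containment; the paper's valuation-theoretic approach is the one that recurs throughout (e.g.\ in the proof of Thm.~\ref{Thm:sosredcurvnonvc}, where a finer control on orders of vanishing, not just boundedness, is needed), so the author likely reached for it out of uniformity with the surrounding arguments rather than necessity.
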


\begin{Remark}
We will mostly use this lemma in the global case $S=C(R)$. Note that,
since the points of $X\setminus C$ in (1) are non-singular by
definition, $C(R)$ is dense in $X(R)$. So if $S=C(R)$, then
$T$ in (2) consists exactly of the non-real points of $X\setminus
C$; in particular, $\wt C(R)=X(R)$ is semialgebraically compact.
\end{Remark}

\begin{proof}
(1) For the existence of $C\injects X$, start with any embedding of
$C$ into affine space, take the closure $X_0$ in the corresponding
projective space, and apply resolution of singularities for curves to
the finitely many points of $X_0\setminus C$. If $C\injects X_1$,
$C\injects X_2$ are two such embeddings, the identity map $C\into C$
induces a birational morphism $X_1\ratmap X_2$ which is an isomorphism
since $X_1$ and $X_2$ are projective and all points of $X_1\setminus C$ and $X_2\setminus C$ are non-singular (see for example Hartshorne \cite{MR0463157}, Prop.~6.8).

(2) Since $X$ is projective, $X(R)$ is semialgebraically
compact. Hence so is the closure $\ol S$ of $S$ in $X(R)$. Therefore,
$\ol S\subset\wt C(R)$ implies $\sO_{\wt C}(\wt C)\subset B_C(S)$. For
the converse, if $f\in R[C]$, $f\notin\sO_{\wt{C}}(\wt C)$, then $f$
has a pole at a point of $\ol{S}\setminus S$. It is easy to see that $f$ cannot be bounded on $S$ (see also \cite{MeineDiss}, Lemma 1.8).

(3) $B_C(S)=R$ if and only if there does not exist a rational function
$f\in R(C)$ with poles only at points of $T$. This only happens if $T=\emptyset$, by the Riemann-Roch theorem.

(4) In general, if $V$ is any variety, then $V\isom\Spec(\sO_V(V))$ if
and only if $V$ is affine. But an irreducible curve is either affine
or projective, and the hypothesis $B_{C_i}(S_i)\neq R$ for
all $i=1,\dots,m$ implies that none of the $\wt C_i$ is
projective, by (3) (where $\wt C_i$ is defined for $C_i$ as $\wt C$ for $C$ in
(2)).

(5) Let $j\in\{1,\dots,m\}$ with $B_{C_j}(S_j)\neq R$. Write
$C'=\bigcup_{i\neq j} C_i$. To find $f$ as in the claim, let $J$ be
the vanishing ideal of $C_j$ in $R[C]$, so that $R[C_j]=R[C]/J$, and
let $I$ be the vanishing ideal of $C'$ in $R[C]$. The ideal
$I_j=(I+J)/J$ of $R[C_j]$ is non-zero, since $C_j\nsubseteq
C'$. Therefore, the residue class ring $R[C_j]/I_j$ is
zero-dimensional, thus it is a finite-dimensional $R$-vector
space. On the other hand, $B_{C_j}(S_j)$ is isomorphic to the
coordinate ring of an affine curve by (2) and (4) and is therefore an
infinite-dimensional subspace of $R[C_j]$. It follows that $I_j\cap
B_{C_j}(S_j)$ is also infinite-dimensional. Hence there exists $f\in I$ such that
$f|_{C_j}\in B_{C_j}(S_j)\setminus R$, and any such $f$ will do what
we want.

If $C$ is connected, a non-constant function on $C$ that is
bounded on $S$ must be non-constant on some $C_i$ and bounded on
$S_i$, so $B_{C_i}(S_i)=R$ for all $i=1,\dots,m$ implies
$B_C(S)=R$. 

(6) We have $B_C(S)=\bigcap B_{C}(S_i)$. Therefore, we may assume that $C$ is
irreducible. Write $\wt C\setminus C=\{P_1,\dots,P_r\}$. Statement (2) says that $0\neq f\in R[C]$ lies in $B(C)$ if and only if ${\rm ord}_{P_i}(f)\ge 0$ for all $i=1,\dots,r$. We may assume $h\neq 0$. Since ${\rm ord}_{P_i}(h)>0$ for all $i=1,\dots,r$ by hypothesis, it follows that there exists $N\ge 0$ such that ${\rm ord}_{P_i}(h^Nf)=N\cdot {\rm ord}_{P_i}(h)+{\rm ord}_{P_i}(f)\ge 0$ holds for all $i=1,\dots,r$.

(7) By (5), $B_C(S)=R$ implies $B_{C_i}(S_i)=R$ for all irreducible components $C_i$ of $C$. Thus we may assume that $C$ is irreducible. Let $P\in X\setminus C$. Then $P$ must be real by (3), hence the corresponding valuation ${\rm ord}_P$ is a real valuation of the function field $R(C)$. Thus if $f_1^2+\cdots+f_r^2\in R$, then ${\rm ord}_P(f_i)=0$ for all $i=1,\dots,r$ by Lemma~\ref{Cor:ValuationSOS}. It follows that the $f_i$ have no poles on the complete curve $X$, i.e.~they are contained in the intersection of all valuation rings of $R(C)$, which is $R$. 
\end{proof}

\begin{Def}\label{Def:VC}
The points of $X\setminus C$ with $C\injects X$ as in the lemma, are called the \emph{points at infinity} of $C$. The set of real points $C(R)$ (or, loosely speaking, the curve $C$ itself) is called \emph{virtually compact} if $B(C_i)\neq R$ holds for every irreducible component $C_i$ of $C$ or, equivalently, if every irreducible component of $C$ has a non-real point at infinity (see Def.~4.8.~in Scheiderer \cite{MR2020709}).
\end{Def}

\begin{Text}\label{Def:PO}
We briefly fix notations and terminology for preorderings and
semialgebraic sets that will be used in section \ref{Sec:POCurves}:
Let $A$ be a ring. A \emph{preordering} of $A$ is a subset $T$ of $A$
that is closed under addition and multiplication and contains all
squares of elements of $A$. Given a subset $\sH$ of $A$, the \emph{preordering generated by $\sH$} is the intersection of all preorderings of $A$ containing $\sH$ and is denoted by $\PO_A(\sH)$ or just $\PO(\sH)$. If $\sH=\{h_1,\dots,h_r\}$ is finite, the generated preordering has a simple explicit description:
\[
\PO_A(\sH)=\left\{\sum_{i\in\{0,1\}^r} s_i \ul h^i\;\bigl|\; s_i\in\sum A^2\right\},
\]
where we use the notation $\ul h^i=h_1^{i_1}\cdots h_r^{i_r}$. A preordering $T$ of $A$ is called finitely generated if there exists a finite subset $\sH$ of $A$ such that $T=\PO(\sH)$.

If $V$ is an affine $R$-variety with coordinate ring $R[V]$ and $S$ a subset of $V(R)$, the psd-cone of $S$ 
\[
\sP_V(S)=\bigl\{f\in R[V]\;|\; \forall x\in S\colon f(x)\ge 0\bigr\}
\]
is a preordering of $R[V]$. Conversely, given a subset $\sH$ of $R[V]$, we write
\[
\sS(\sH)=\bigl\{x\in V(R)\:|\: \forall h\in\sH\colon h(x)\ge 0\bigr\}.
\]
A subset $S$ of $V(R)$ is called \emph{basic closed} if there exists some finite subset $\sH$ of $R[V]$ such that $S=\sS(\sH)$. A finitely generated preordering $T$ of $R[V]$ is called \emph{saturated} if $T=\PO(\sS(T))$.

If $T$ is a preordering of $R[V]$ and $Z$ a closed subvariety of $V$ with vanishing ideal $\sI_Z\subset R[V]$, we denote the induced preordering $(T+\sI_Z)/\sI_Z$ of $R[Z]=R[V]/\sI_Z$ by $T|_Z$.
\end{Text}

\section{Sums of squares on reducible varieties: Generalities}\label{Sec:Generalities}

Let always $R$ be a real closed field, and let $V$ be an affine
$R$-variety with coordinate ring $R[V]$. We say that \emph{psd$=$sos in $R[V]$} if for every $f\in R[V]$ with $f(P)\ge 0$ for all $P\in V(R)$ there exist $f_1,\dots,f_n\in R[V]$ such that $f=f_1^2+\cdots+f_n^2$. This is what is known in the irreducible case:

\begin{Text}\label{Rem:KnownResults}
\begin{enumerate}
\item If $V$ is zero-dimensional, then $R[V]$ is a direct product of copies of $R$ and $R(\sqrt{-1})$, so that psd$=$sos in $R[V]$. 
\item If $R=\R$ and $V$ is an irreducible affine curve, Scheiderer has
given necessary and sufficient conditions for psd$=$sos in $\R[V]$; see Cor.~\ref{Cor:SOSirrcurvVC} and Thm.~\ref{Thm:SOSirrCurvesNVC} below.
\item If $R=\R$ and $V$ is a non-singular irreducible affine surface over $\R$ such that $V(\R)$ is compact, then psd$=$sos in $\R[V]$ (Scheiderer \cite{MR2223624}, Cor.~3.4).
\item There exist examples of non-singular irreducible affine surfaces $V$ over $\R$ such that $V(\R)$ is not compact and psd$=$sos in $\R[V]$ (Scheiderer \cite{MR2223624}, Remark 3.15, and \cite{MeineDiss}, section 4.4). 
\item If $V$ is irreducible and not real, then it is not hard to show
that psd$=$sos in $R[V]$ if and only if $V(R)=\emptyset$; see also Cor.~\ref{Prop:Nonrealcomponents} below. 
\end{enumerate}
\end{Text}

We now turn to reducible varieties. To avoid confusion, note that we
will be looking at three kinds of components of $V$:
\emph{Irreducible} components of $V$ (for the Zariski-topology);
\emph{connected} components of $V$ (again for the Zariski-topology);
and occasionally connected components of $V(R)$ (for the semialgebraic
topology which is the euclidean topology for $R=\R$).  Any irreducible
component is connected, and any connected component is a union of
irreducible components. But even if $V$ is irreducible, $V(R)$ need
not be connected.

\medskip

\begin{Question*}
Let $V_1,\dots,V_m$ be the irreducible components of $V$, and assume
that psd$=$sos in $R[V_1],\dots,R[V_m]$. Under what circumstances can we conclude that psd$=$sos in $R[V]$?
\end{Question*}

\begin{Remark*}
It is not \emph{a priori} clear that psd$=$sos in $R[V]$ implies
psd$=$sos in each $R[V_i]$, since a psd function on $V_i(R)$ need not
extend to a psd function on $V(R)$. We will later see that this
implication does indeed hold for curves
(Prop.~\ref{Prop:psdsos_passestoirrcomp}). It would seem that it
should be true in general, but I do not know of a way to prove this.
\end{Remark*}

Note that if $V$ has connected components
$W_1,\dots,W_l$, then $R[V]\isom R[W_1]\times\cdots\times R[W_l]$ and
psd$=$sos in $R[V]$ if and only if psd$=$sos in all
$R[W_1],\dots,R[W_l]$. Thus we can always assume that $V$ is
connected, and the interesting data is how the irreducible components
$V_1,\dots,V_m$ intersect. We begin with the simplest conceivable
case:

\begin{Prop}\label{Prop:redcurvetransversal}
Let $V$ be an affine $R$-variety that is the union of two closed
subvarieties $V_1$ and $V_2$ intersecting transversally in a single
real point, and let $f\in R[V]$. If $f|_{V_1}$ and
$f|_{V_2}$ are sums of squares in $R[V_1]$ resp.~$R[V_2]$, then $f$ is a sum of squares in $R[V]$.
\end{Prop}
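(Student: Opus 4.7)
The plan is to use Lemma \ref{Lemma:FunctionsOnTransversalIntersection} to identify
\[
R[V]\isom\bigl\{(g,h)\in R[V_1]\times R[V_2]\:|\: g(P)=h(P)\bigr\},
\]
so that $f$ corresponds to the pair $(f_1,f_2)$ with $f_i=f|_{V_i}$. By hypothesis I can write $f_1=g_1^2+\cdots+g_n^2$ in $R[V_1]$ and $f_2=h_1^2+\cdots+h_n^2$ in $R[V_2]$, after padding one of the representations with zeros so that the number of summands is the same. The only obstruction to assembling these into a sum of squares in $R[V]$ is the compatibility condition $g_i(P)=h_i(P)$ for every $i$, which need not hold on the nose.

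The key step is to fix this by an orthogonal change of basis applied to the $g_i$. Set $u=(g_1(P),\dots,g_n(P))$ and $v=(h_1(P),\dots,h_n(P))$ in $R^n$. Evaluating at $P$ gives
\[
u_1^2+\cdots+u_n^2=f(P)=v_1^2+\cdots+v_n^2,
\]
so $u$ and $v$ have the same Euclidean norm. Since the orthogonal group $O(n,R)$ acts transitively on spheres in $R^n$ (for instance, via the Householder reflection through the hyperplane orthogonal to $u-v$ when $u\neq v$, and by the identity otherwise), there exists $A\in O(n,R)$ with $Au=v$.

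Now I set $(g_1',\dots,g_n')^T=A\cdot(g_1,\dots,g_n)^T$, regarded as an $n$-tuple of elements of $R[V_1]$. Because $A^TA=I$, the sum of squares is preserved: $\sum_i (g_i')^2=\sum_i g_i^2=f_1$. By construction, $g_i'(P)=(Au)_i=v_i=h_i(P)$ for each $i$. Thus each pair $(g_i',h_i)$ satisfies the compatibility condition and defines an element $s_i\in R[V]$, and
\[
s_1^2+\cdots+s_n^2=\bigl(\textstyle\sum_i (g_i')^2,\sum_i h_i^2\bigr)=(f_1,f_2)=f,
\]
as desired. There is no real obstacle in this argument; the only small point to take care of is the existence of the orthogonal transformation $A$ over the real closed field $R$, which is immediate from the Householder construction.
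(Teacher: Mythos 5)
Your proof is correct and is essentially the same argument as the paper's: identify $R[V]$ with the fibre product over $P$ via Lemma \ref{Lemma:FunctionsOnTransversalIntersection}, pad the two sum-of-squares representations to the same length, and apply an orthogonal transformation on the $V_1$ side to match the evaluation vectors at $P$. Your extra remark that the needed orthogonal matrix can be taken to be a Householder reflection is a nice way to make the existence over an arbitrary real closed field $R$ explicit, but otherwise the two proofs coincide.
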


\begin{proof}
Write $V_1\cap V_2=\{P\}\in V(R)$. Using Lemma
\ref{Lemma:FunctionsOnTransversalIntersection}, we can identify $R[V]$
with the subring of $R[V_1]\times R[V_2]$ consisting of all pairs
$(f,g)$ such that $f(P)=g(P)$. Let $(f,g)\in R[V]$ be such that
$f=\sum_{i=1}^n f_i^2$, $g=\sum_{i=1}^n g_i^2$ with $f_1,\dots,f_n\in
R[V_1]$ and $g_1,\dots,g_n\in R[V_2]$ (not necessarily $\neq 0$). The
vectors $v=\bigl(f_1(P),\dots,f_n(P)\bigr)^t$ and
$w=\bigl(g_1(Q),\dots,g_n(Q)\bigr)^t$ in $R^n$ have the same euclidean
length since $f(P)=g(Q)$. Therefore, there exists an orthogonal matrix
$B\in O_n(R)$ such that $Bv=w$. Put $(\wt f_1,\dots,\wt f_n)^t=B\cdot
(f_1,\dots,f_n)^t$, then $f=\sum\wt f_i^2$ and $(\wt f_i,g_i)\in R[V]$
for all $i$, hence $(f,g)=\sum_{i=1}^n (\wt f_i,g_i)^2$.
\end{proof}

\begin{Examples}\label{Ex:TwoCurvesPSDSOS}
\begin{enumerate}
\item Let $C$ be the plane curve $\{xy=0\}$. Then psd$=$sos in $R[C]$ by the proposition.
\item More generally, let $V_1$ and $V_2$ be any two affine $R$-varieties with $V_i(R)\neq\emptyset$. Fix points $P\in V_1(R)$ and $Q\in V_2(R)$, take the ring $A=\{(f,g)\in R[V_1]\times R[V_2]\:|\: f(P)=g(Q)\}$, and let $V$ be the affine variety $\Spec(A)$. (To see that $A$ is a finitely generated $R$-algebra, choose generators $x_1,\dots,x_n$ of $R[V_1]$ and $y_1,\dots,y_m$ of $R[V_2]$ such that $x_i(P)=0$ and $y_i(Q)=0$ for all $i$. Then $A$ is generated by the elements $(x_i,0),(0,y_i)$ and $(1,1)$.) The variety $V$ is $V_1$ and $V_2$ glued transversally along $P$ and $Q$. If psd$=$sos in $R[V_1]$ and $R[V_2]$, it also holds in $R[V]$. A simple example would be given by two $2$-dimensional spheres over $\R$ intersecting transversally.
\end{enumerate}
\end{Examples}

If $V_1$ and $V_2$
intersect at more than one point, the statement of
\ref{Prop:redcurvetransversal} becomes false in general, as we shall
see later. However, we can still say something if $R=\R$ and $V_1(\R)$
is compact. For the case of curves, we will also allow the slightly
weaker condition of virtual compactness: Recall from \ref{Def:RBF}
that $B(V)$ denotes the ring of bounded functions on an affine variety
$V$; if $C$ is a curve over $\R$, then $C(\R)$ is called virtually
compact if every irreducible component of $C$ admits a non-constant
bounded function; see \ref{Def:VC}. 

We will need the existence of a certain kind of polynomial partition of unity adapted to points in the sense of the following Lemma. It is an easy consequence of a basic topological lemma due to Kuhlmann, Marshall, and Schwartz. 

\begin{Lemma}
Let $V$ be an affine $\R$-variety such that $V(\R)$ is compact or, if $V$ is a curve, virtually compact. Given finitely many distinct points $P_1,\dots,P_r\in V(\R)$, there exist $h_1,\dots,h_r\in B(V)$ with the following properties:
\begin{enumerate}
\item $\sum_{i=1}^rh_i=1$;
\item $h_i\ge 0$ on $V(\R)$;
\item $h_i(P_j)=0$ for all $i\neq j$ (hence $h_i(P_i)=1$).
\end{enumerate}
\end{Lemma}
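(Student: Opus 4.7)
The plan is to apply the Kuhlmann--Marshall--Schwartz topological partition-of-unity lemma to a carefully chosen open cover of $V(\R)$ that separates the $P_i$, and then repackage the output into the three required conditions. Before doing so, I would reduce the virtually compact curve case to a genuinely compact one: if $C$ is a virtually compact curve, then $B_{C_i}(C_i(\R)) \neq \R$ for every irreducible component, so parts (2) and (4) of Lemma~\ref{Lemma:VirtualCompactness}, together with the remark following it, let me replace $C$ by $\widetilde{C} = \Spec(B(C))$, an affine variety with $\R[\widetilde{C}] = B(C)$ and $\widetilde{C}(\R) = X(\R)$ semialgebraically compact. A solution to the lemma on $\widetilde{C}$ automatically produces the required $h_i \in B(C)$, since $C(\R) \subseteq \widetilde{C}(\R)$ and the three properties only restrict when passed from $\widetilde{C}(\R)$ down to $C(\R)$.

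With $V(\R)$ now assumed compact, I would cover it by pairwise disjoint open semialgebraic neighborhoods $U_i$ of the points $P_i$ together with the open set $U_0 = V(\R) \setminus \{P_1, \ldots, P_r\}$. Applying the KMS lemma to this open cover yields polynomials $q_0, q_1, \ldots, q_r \in \R[V]$ with $q_i \ge 0$ on $V(\R)$, $\sum_{i=0}^r q_i = 1$, and $q_i$ vanishing on $V(\R) \setminus U_i$. Because the $U_i$ with $i \ge 1$ are disjoint and no $P_j$ lies in $U_0$, one reads off $q_i(P_j) = 0$ whenever $i \neq j$, and $q_0(P_j) = 0$ for every $j$. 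Absorbing $q_0$ symmetrically by setting
\[
h_i = q_i + \frac{q_0}{r} \qquad (i = 1, \ldots, r)
\]
then delivers $\sum_{i=1}^r h_i = 1$, $h_i \ge 0$ on $V(\R)$, and $h_i(P_j) = 0$ for $i \neq j$, which forces $h_i(P_i) = 1$.

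The main obstacle --- indeed essentially the only nontrivial ingredient --- is the KMS topological partition of unity itself, which I would cite as a black box; the rest of the argument is combinatorial bookkeeping. The only real subtlety is verifying that the reduction to the compact case is legitimate in the virtually compact curve setting, but this is exactly what Lemma~\ref{Lemma:VirtualCompactness} is engineered to provide.
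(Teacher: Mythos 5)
Your reduction to the compact case via $\Spec(B(V))$ is exactly right and matches the paper, and the intended combinatorics at the end (absorbing $q_0$ symmetrically) would be fine. The gap is in what you claim the Kuhlmann--Marshall--Schwartz lemma produces. You assert that applying it to the open cover $\{U_0, U_1, \dots, U_r\}$ yields \emph{regular} functions $q_i \in \R[V]$ with $q_i \ge 0$, $\sum q_i = 1$, and $q_i$ vanishing on $V(\R)\setminus U_i$. But for $i \ge 1$ the closed set $V(\R)\setminus U_i$ has non-empty semialgebraic interior (it is all of $V(\R)$ minus a small neighbourhood of $P_i$); a regular function vanishing on a set with non-empty interior vanishes on every irreducible component of $V$ meeting that interior, hence $q_i$ would be identically zero on those components. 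Already for $V = \A^1_\R$ with two points this forces $q_1 = q_2 = 0$ and $q_0 = 1$, contradicting $q_0(P_j)=0$. So a polynomial partition of unity subordinate to an open cover in the topological sense simply does not exist, and the KMS lemma is not claiming one.

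What KMS Lemma~2.1 actually gives, and what the paper's proof uses, is the ability to turn an algebraic identity $\sum c_i p_i = 1$ (over the compact set, with $p_i \ge 0$ having no common real zero) into one $\sum d_i p_i = 1$ with the coefficients $d_i$ \emph{strictly positive} on $V(\R)$. To exploit this you must first hand-build the $p_i$ with the correct \emph{finite} vanishing locus: the paper takes $g_i$ with $g_i(P_j)=0$ for $j \ne i$, forms $p = g_1^2 + \cdots + g_{r-1}^2$ (vanishing exactly at $P_r$ among the $P_j$), obtains a complementary $g$ vanishing at $P_1,\dots,P_{r-1}$ via the Nullstellensatz, and only then applies KMS to the identity $s'p + t'g^2 = 1$ to make $s, t$ positive. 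Finite sets are Zariski-closed, which is why this works and the open-cover version does not. To repair your proof you would need to replace the subordinate-partition step by this construction of explicit nonnegative regular functions with prescribed finite vanishing, at which point you essentially reproduce the paper's argument.
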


\begin{proof}
Assume $r\ge 2$, the case $r=1$ being trivial. Let $W=\Spec(B(V))$. If $V(\R)$ is compact, then $B(V)=\R[V]$ and $W=V$. If $V$ is a curve and $V(\R)$ is virtually compact, then the canonical morphism $V\into W$ induced by the inclusion $B(V)\subset\R[V]$ is an embedding of affine curves and $W(\R)$ is compact (Lemma \ref{Lemma:VirtualCompactness}). It then suffices to prove the Lemma for $W$. It is therefore not restrictive to assume that $V(\R)$ is compact.

Choose elements $g_1,\dots,g_r\in\R[V]$ such that $g_i(P_j)=0$ for all
$i\neq j$ and $g_i(P_i)\neq 0$. Since
$(g_1^2+\cdots+g_{r-1}^2)(P_i)=0$ if and only if $i=r$, the Nullstellensatz gives an identity
$a(g_1^2+\cdots+g_{r-1}^2)+g=1$ with $a,g\in\R[V]$ and $g(P_i)=0$ for
$i=1,\dots,r-1$. It follows that $g_1^2+\cdots+g_{r-1}^2$ and $g$ have
no common (complex) zeros on $V$. Hence again by the Nullstellensatz,
there is an identity $s(g_1^2+\cdots+g_{r-1}^2)+tg^2=1$,
$s,t\in\R[V]$. Now since $V(\R)$ is compact, Lemma 2.1 in
\cite{MR2174483} states that we can find such $s,t$ with the
additional property that $s$ and $t$ are strictly positive on
$V(\R)$. Thus $h_i=sg_i^2$ for $i\in\{1,\dots,r-1\}$ and $h_r=tg^2$
will do what we want.
\end{proof}

\begin{Cor}\label{Cor:PoU}
Let $V$ be an affine $\R$-variety such that $V(\R)$ is compact or, if
$V$ is a curve, virtually compact. Given finitely many distinct points
$P_1,\dots,P_r\in V(\R)$ and real numbers $a_1,\dots,a_r\in [-1,1]$,
there exists a function $g\in\R[V]$ such that $g(P_i)=a_i$ for all
$i\in\{1,\dots,r\}$ and $|g(x)|\le 1$ for all $x\in V(\R)$. If
$a_1,\dots,a_r$ are all non-negative, one can find such a $g$ that is non-negative on $V(\R)$, as well.
\end{Cor}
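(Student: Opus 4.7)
The proof is an immediate application of the preceding Lemma. My plan is as follows.

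Apply the Lemma to the given points $P_1,\dots,P_r\in V(\R)$ to obtain a partition of unity $h_1,\dots,h_r\in B(V)\subset\R[V]$ satisfying $\sum_{i=1}^r h_i=1$, $h_i\ge 0$ on $V(\R)$, and $h_i(P_j)=\delta_{ij}$. Then simply set
\[
g=\sum_{i=1}^r a_i h_i\in\R[V].
\]

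Evaluating at $P_j$ gives $g(P_j)=\sum_i a_i h_i(P_j)=a_j$, which gives the interpolation condition. For the bound, for any $x\in V(\R)$ we estimate
\[
|g(x)|\le\sum_{i=1}^r |a_i|\, h_i(x)\le\sum_{i=1}^r h_i(x)=1,
\]
using $|a_i|\le 1$ and $h_i(x)\ge 0$. Finally, if all $a_i\ge 0$, then $g$ is a non-negative combination of the non-negative functions $h_i$, so $g\ge 0$ on $V(\R)$.

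There is no obstacle to address here: all the work has been done in the previous Lemma (producing the polynomial partition of unity adapted to the points $P_1,\dots,P_r$, which in turn rests on the Kuhlmann--Marshall--Schwartz positivity result in the compact case combined with passage to $\Spec(B(V))$ in the virtually compact case). Once that Lemma is in hand, the corollary is pure linear algebra on coefficients of a convex combination.
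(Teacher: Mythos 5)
Your proof is correct and follows the same approach as the paper: apply the preceding Lemma, set $g=\sum_i a_i h_i$, and verify the interpolation and bound directly. The only difference is in the final claim: you observe that the same $g$ is automatically non-negative when all $a_i\ge 0$ (which is correct and immediate), whereas the paper instead reapplies the first part of the Corollary to $\sqrt{a_1},\dots,\sqrt{a_r}$ to get $h$ and sets $g=h^2$, producing a $g$ that is actually a square; since the Corollary only asserts non-negativity, your shorter argument suffices.
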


\begin{proof}
Take $h_1,\dots,h_r$ as in the lemma and put $g=\sum_{i=1}^r
a_ih_i$. Then $g(P_i)=a_i$ and $|g(x)|\le\sum_i |a_i|h_i(x)\le\sum_i
h_i(x)=1$ for all $x\in V(\R)$. If $a_1,\dots,a_r$ are non-negative, take
$h\in\R[V]$ such that $h(P_i)=\sqrt{a_i}$ for all $i=1,\dots,r$ and $|h|\le 1$ on $V(\R)$, and put $g=h^2$.
\end{proof}

\begin{Lemma}
Let $V$ be an affine $\R$-variety such that $V(\R)$ is compact or, if
$V$ is a curve, virtually compact. Assume that psd$=$sos in
$\R[V]$. Let $P_1,\dots,P_r\in V(\R)$ be distinct points, and let
$f\in\R[V]$ be psd. Given $m\ge 1$ and vectors
$a^{(1)},\dots,a^{(r)}\in\R^m$ such that $||a^{(i)}||=f(P_i)$ for all
$i\in\{1,\dots,r\}$, there exist $n\ge m$ and $f_1,\dots,f_n\in\R[V]$
with the following properties:
\begin{enumerate}
\item $f=\sum_{i=1}^n f_i^2$;
\item $f_j(P_i)=a_j^{(i)}$ for all $i=1,\dots,r$, $1\le j\le m$;
\item $f_j(P_i)=0$ for all $i=1,\dots,r$, $m+1\le j\le n$.
\end{enumerate}
\end{Lemma}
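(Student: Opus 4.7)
I will proceed by induction on $r$. For the base case $r=0$ there is nothing to check: fix any sum-of-squares representation of $f$ supplied by psd$=$sos, and pad it with zero summands until it has at least $m$ terms.

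For the inductive step, assume the claim holds for $r-1$ points. Applied to $P_1,\dots,P_{r-1}$, it produces $f=\sum_{j=1}^n h_j^2$ with $h_j(P_i)=a_j^{(i)}$ for $i\le r-1,\ j\le m$, and $h_j(P_i)=0$ for $i\le r-1,\ j>m$. Set
\[
b=(h_1(P_r),\dots,h_n(P_r))\in\R^n,\qquad a=(a_1^{(r)},\dots,a_m^{(r)},0,\dots,0)\in\R^n,
\]
which are two vectors of common Euclidean length $\sqrt{f(P_r)}$. If $b=a$ we are done. Otherwise the reflection $R=I-\frac{2}{\|b-a\|^2}(b-a)(b-a)^t\in O_n(\R)$ satisfies $Rb=a$. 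The idea is to apply $R$ only near $P_r$. By Corollary \ref{Cor:PoU}, choose $e\in\R[V]$ with $0\le e\le 1$ on $V(\R)$, $e(P_r)=1$ and $e(P_i)=0$ for $i<r$, and define
\[
f_j=h_j-\frac{2e}{\|b-a\|^2}(b-a)_j\cdot\bigl((b-a)^t h\bigr),\qquad j=1,\dots,n,
\]
where $(b-a)^t h=\sum_l(b_l-a_l)h_l\in\R[V]$. Then $f_j(P_i)=h_j(P_i)$ for $i<r$ (since $e$ vanishes there) and $f_j(P_r)=(Rb)_j=a_j$, so the required point values are in place.

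A direct expansion of $\sum_j f_j^2$, using $(b-a)^t h\cdot 2/\|b-a\|^2$ for the scalar factor, yields
\[
f-\sum_{j=1}^n f_j^2=\frac{4}{\|b-a\|^2}\,e(1-e)\bigl((b-a)^t h\bigr)^2,
\]
which is non-negative on $V(\R)$ since $0\le e\le 1$. The crucial observation is that $e(1-e)$ vanishes at every $P_i$ (at $P_i$ for $i<r$ because $e=0$, and at $P_r$ because $e=1$). Applying psd$=$sos to the element $e(1-e)$, write $e(1-e)=\sum_k\sigma_k^2$ with $\sigma_k\in\R[V]$; then each $\sigma_k$ also vanishes at every $P_i$. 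Setting $\tau=2/\|b-a\|$, one has
\[
f-\sum_{j=1}^n f_j^2=\sum_k\bigl(\tau\,\sigma_k\cdot(b-a)^t h\bigr)^2,
\]
and each additional summand $\tau\sigma_k(b-a)^t h$ vanishes at every $P_i$. Adjoining these summands to $f_1,\dots,f_n$ gives the desired representation and completes the induction.

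The main obstacle is that no polynomial-valued orthogonal matrix function on $V$ can simultaneously be the identity at $P_1,\dots,P_{r-1}$ and a non-trivial orthogonal matrix at $P_r$, so the "localised reflection" above cannot preserve the sum of squares exactly; it produces the explicit error $\frac{4}{\|b-a\|^2}e(1-e)((b-a)^t h)^2$. The decisive point is the choice of $e$ so that $e(1-e)$ vanishes at every $P_i$, which is what guarantees that the extra square summands obtained by invoking psd$=$sos on the error all vanish at $P_1,\dots,P_r$ and therefore do not disturb the prescribed values.
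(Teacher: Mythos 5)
Your proof is correct, and it takes a genuinely different route from the paper's. The paper's argument first replaces $f$ by a bounded $\tilde f\in B(V)$ with $0\le\tilde f\le f$ and $\tilde f(P_i)=f(P_i)$ (using Lemma~\ref{Lemma:VirtualCompactness}(6) in the virtually compact curve case), normalises so that $0\le f\le 1$, and then builds $f_1,\dots,f_m$ one coordinate at a time: at stage $j$ it uses Cor.~\ref{Cor:PoU} to produce $g_j$ with the right values at all $P_i$ simultaneously and with $|g_j|\le 1$, sets $f_j=g_j\cdot(\text{a psd factor})$, and finally applies psd$=$sos to the remainder $f-f_1^2-\cdots-f_m^2$. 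You instead induct on the number $r$ of points and, at each stage, transform an existing sos representation by a Householder reflection that is ``switched on'' near the new point $P_r$ via a cutoff $e$ from Cor.~\ref{Cor:PoU}; the cost of localising the reflection is the explicit error $\frac{4}{\|b-a\|^2}e(1-e)\bigl((b-a)^t h\bigr)^2$, which you correctly observe is psd, vanishes at every $P_i$, and can itself be absorbed by psd$=$sos without disturbing any prescribed values. Your approach buys a cleaner argument — it bypasses the bounded-function reduction and normalisation entirely, since psd$=$sos is applied only to the manifestly bounded element $e(1-e)$ — and it makes transparent the analogy with the orthogonal-matrix trick of Prop.~\ref{Prop:redcurvetransversal}, of which this lemma is a multi-point generalisation. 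The paper's approach buys a non-inductive, more constructive description of the summands and makes the role of boundedness explicit. Both proofs lean on Cor.~\ref{Cor:PoU} as the essential interpolation tool.

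(A small remark on the statement itself, which you silently corrected: the hypothesis $\|a^{(i)}\|=f(P_i)$ should read $\|a^{(i)}\|^2=f(P_i)$, since the conclusion forces $f(P_i)=\sum_j f_j(P_i)^2=\|a^{(i)}\|^2$; this is also the form in which the lemma is invoked in Prop.~\ref{Thm:SOSOneCompactComp}.)
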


\begin{proof}
Note first that property (3) is automatic if (1) and (2) are
satisfied. We first show that there exists $\wt f\in B(V)$ such that
$0\le\wt f\le f$ on $V(\R)$ and such that $\wt f(P_i)=f(P_i)$ for all
$i=1,\dots,r$. If $V(\R)$ is compact, we can take $\wt f=f$. If $V$ is
a curve and $V(\R)$ is virtually compact, put $W=\Spec(B(V))$ and
consider $V$ as a subcurve of $W$ via the embedding $V\injects W$
induced by the inclusion $B(V)\subset\R[V]$ (Lemma
\ref{Lemma:VirtualCompactness}). Let $Q_1,\dots,Q_s\in W(\R)$ be the
finitely many points of $W\setminus V$. By Cor.~\ref{Cor:PoU}, there
exists $h\in B(V)$ such that $h(Q_i)=0$ for all $i=1,\dots,s$,
$h(P_i)=1$ for all $i=1,\dots,r$ and $0\le h\le 1$ on $W(\R)$. Then
$h^Nf\in B(V)$ for sufficiently large $N$ (by Lemma
\ref{Lemma:VirtualCompactness} (6)), and we can take $\wt f=h^Nf$ for
such an $N$.

We have $f-\wt f\ge 0$ on $V(\R)$ and $(f-\wt f)(P_i)=0$ for all
$i=1,\dots,r$. Since psd$=$sos in $\R[V]$, we can write $f=h_1^2+\cdots+h_k^2+\wt f$ with $h_1,\dots,h_k\in\R[V]$ and $h_j(P_i)=0$ for all $i=1,\dots,r$ and
$j=1,\dots,k$. It therefore suffices to prove the claim for $\wt
f$. 

Hence we may assume that $f$ is bounded on $V(\R)$. We may further
assume that $f\neq 0$ and that $f\le 1$, upon replacing $f$ by $f\cdot\sup_{x\in
V(\R)}\{f(x)\}^{-1}$. We define $f_1,\dots,f_m$
recursively as follows: Apply Cor.~\ref{Cor:PoU} and choose
$g_1\in\R[V]$ with $|g_1|\le 1$ on $V(\R)$ and such that $g_1(P_i)=0$
if $f(P_i)=0$ and $g_1(P_i)=a^{(i)}_1\cdot f(P_i)^{-1}$ otherwise (for
all $i=1,\dots,r$). Put $f_1=g_1 f$, then $f_1(P_i)=a_1^{(i)}$ (note
that $f(P_i)=0$ implies $a_1^{(i)}=0$) and $f_1^2\le f$. Now since
$f-f_1^2\ge 0$, we can apply Cor.~\ref{Cor:PoU} again and choose
$f_2\in\R[V]$ such that $f_2(P_i)=a_2^{(i)}$ and $f_2^2\le
f-f_1^2$. Continuing recursively in this manner will produce
$f_1,\dots,f_m$ satisfying property (2) and such that $f-f_1^2-\cdots-
f_m^2\ge 0$ holds on $V(\R)$. Since psd$=$sos in $\R[V]$, there exist
$n\ge m$ and elements $f_{m+1},\dots,f_n\in \R[V]$ such that
$f=\sum_{i=1}^n f_i^2$.
\end{proof}

\begin{Prop}\label{Thm:SOSOneCompactComp}
Let $V$ be an affine $\R$-variety, and assume that $V$ is the union of
two closed subvarieties $V_1$ and $V_2$ with the following properties:

\begin{enumerate}
\item psd$=$sos in $\R[V_1]$;
\item $V_1(\R)$ is compact, or $V_1$ is a curve with $V_1(\R)$ virtually compact;
\item The intersection of $V_1$ and $V_2$ is finite and transversal;
\item All points of $V_1\cap V_2$ are real.
\end{enumerate}
Then every psd function $f\in\R[V]$ such that $f|_{V_2}$ is a sum of squares in
$\R[V_2]$ is a sum of squares in $\R[V]$.
\end{Prop}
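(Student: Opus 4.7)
The plan is to reduce to an application of the preceding lemma on ``lifting'' an sos decomposition so that its values at prescribed points are also prescribed. Let $P_1,\dots,P_r$ denote the (real) intersection points $V_1\cap V_2$. By transversality and Lemma~\ref{Lemma:FunctionsOnTransversalIntersection}, I identify
\[
\R[V]\isom\bigl\{(u,v)\in\R[V_1]\times\R[V_2]\;\bigl|\;u(P_j)=v(P_j)\text{ for }j=1,\dots,r\bigr\},
\]
and write $f=(f^{(1)},f^{(2)})$ with $f^{(1)}=f|_{V_1}$, $f^{(2)}=f|_{V_2}$. Since $f$ is psd on $V(\R)\supset V_1(\R)$, the function $f^{(1)}$ is psd, and by hypothesis $f^{(2)}=\sum_{i=1}^n g_i^2$ for some $g_1,\dots,g_n\in\R[V_2]$.

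Next I set $a^{(j)}=(g_1(P_j),\dots,g_n(P_j))\in\R^n$ for $j=1,\dots,r$. The key compatibility is
\[
\|a^{(j)}\|^2=\sum_{i=1}^n g_i(P_j)^2=f^{(2)}(P_j)=f^{(1)}(P_j),
\]
where the last equality holds because $(f^{(1)},f^{(2)})$ comes from an element of $\R[V]$, hence agrees at each $P_j$. Now apply the preceding lemma to the variety $V_1$ (which is compact, or a virtually compact curve), the psd function $f^{(1)}$, and the vectors $a^{(1)},\dots,a^{(r)}$. It produces $N\ge n$ and $f_1,\dots,f_N\in\R[V_1]$ with
\[
f^{(1)}=\sum_{i=1}^N f_i^2,\quad f_i(P_j)=g_i(P_j)\ (i\le n),\quad f_i(P_j)=0\ (i>n),
\]
for all $j=1,\dots,r$.

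Finally, I set $g_{n+1}=\cdots=g_N=0\in\R[V_2]$. Then for every $i\in\{1,\dots,N\}$ the pair $(f_i,g_i)$ satisfies $f_i(P_j)=g_i(P_j)$ at each intersection point, so it defines an element of $\R[V]$; and
\[
\sum_{i=1}^N(f_i,g_i)^2=\Bigl(\sum_{i=1}^N f_i^2,\sum_{i=1}^N g_i^2\Bigr)=(f^{(1)},f^{(2)})=f,
\]
proving that $f$ is sos in $\R[V]$. The substantive step is the invocation of the previous lemma, whose proof already absorbed the real difficulty (producing the polynomial partition of unity of Corollary~\ref{Cor:PoU} from the compactness/virtual compactness assumption, and using it to prescribe values of summands in an sos decomposition); here one only has to observe that hypotheses (1)--(4) of the proposition are precisely what is needed to feed that lemma and to glue the resulting functions across the transversal intersection via Lemma~\ref{Lemma:FunctionsOnTransversalIntersection}.
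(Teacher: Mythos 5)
Your proposal is correct and follows essentially the same route as the paper's own proof: identify $\R[V]$ with the fibre product of $\R[V_1]$ and $\R[V_2]$ over the intersection points via Lemma~\ref{Lemma:FunctionsOnTransversalIntersection}, apply the preceding lemma to adjust the sos decomposition of $f|_{V_1}$ so that its summands take the prescribed values $g_i(P_j)$ at the intersection points, pad the $g_i$ with zeros, and glue. (You also correctly read the norm condition in the preceding lemma as $\|a^{(j)}\|^2=f(P_j)$, which is what its proof actually uses and delivers.)
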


\begin{proof}
Write $V_1\cap V_2=\{P_1,\dots,P_r\}\subset V(\R)$, $r\ge 0$. Using Lemma \ref{Lemma:FunctionsOnTransversalIntersection}, we can
identify $\R[V]$ with the subring of $\R[V_1]\times\R[V_2]$ consisting
of all pairs $(f,g)$ such that $f(P_i)=g(P_i)$ for all
$i=1,\dots,r$. Let
$F\in\R[V]$ be non-negative on $V(\R)$ and such that
$g=F|_{V_2}=g_1^2+\cdots+g_m^2$ for some $m\ge 1$,
$g_1,\dots,g_m\in\R[V_2]$. 

By the preceding lemma (applied with $a^{(i)}_j=g_j(P_i)$), there
exist $n\ge m$ and elements $f_1,\dots,f_n\in\R[V_1]$ such that
$f=F|_{V_1}=\sum_{j=1}^n f_j^2$, $f_j(P_i)=g_j(P_i)$ for all $j\le m$,
and $f_j(P_i)=0$ for all $j>m$ ($i=1,\dots,r$). It follows that
$F_j=(f_j,g_j)$ for $j=1,\dots,m$ and $F_j=(f_j,0)$ for
$j=m+1,\dots,n$ are elements of $\R[V]$, and that
$F=(f,g)=\sum_{j=1}^n F_j^2$.
\end{proof}

We will mostly apply this proposition in the proof of our main result on curves (Thm.~\ref{MainResult}). As mentioned above, the condition that psd$=$sos in $\R[V_1]$ can only be satisfied if $V_1$ has dimension at most
$2$. But even in the two-dimensional case, hypothesis (3) is really
too restrictive for the proposition to be of much use. One can produce examples though: 

\begin{Example}
Let $V_1=\{x^2+y^2+z^2=1\}$, $V_2=\{x=y=0\}$, $V=V_1\cup V_2$, a sphere and a line intersecting transversally in affine three-space with coordinates $x,y,z$ over $\R$. Then psd$=$sos holds in $\R[V_1]$ by Scheiderer's results and in $\R[V_2]$ (elementary), thus it also holds in $\R[V]$.
\end{Example}

\noindent We conclude this section with an observation in the non-real case: 

\begin{Lemma}[Scheiderer \cite{MR1675230}, Lemma 6.3]
Let $A$ be a connected noetherian ring with $\Sper A\neq\emptyset$, and suppose that $A$ is not real reduced. Then there exists $f\in A$ that vanishes identically on $\Sper A$ but is not a sum of squares in $A$.
\end{Lemma}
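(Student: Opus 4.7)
The plan is to argue by contradiction: suppose every element of the real nilradical $N$ of $A$ — the intersection of all real primes, equivalently the ideal of elements vanishing identically on $\Sper A$ — lies in $\sum A^2$, and derive $N=0$, contradicting the hypothesis that $A$ is not real reduced. Note that $\Sper A\neq\emptyset$ already guarantees $N$ is a proper ideal of $A$.

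The key first step is to establish $N=N^2$. I would take $a\in N$, write $a=\sum_i f_i^2$, and reduce modulo an arbitrary real prime $\fp$. Because $\fp$ is real, $A/\fp$ embeds into the formally real field $\qf(A/\fp)$, and in that field the identity $\sum_i \bar f_i^2=0$ forces each $\bar f_i=0$. Running this over every real prime shows $f_i\in N$ for every $i$, hence $a\in N^2$; the reverse inclusion $N^2\subseteq N$ is automatic.

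At this point I would apply Nakayama's lemma. Since $A$ is noetherian, $N$ is finitely generated, and $N=N\cdot N$ gives an element $a\in N$ with $(1-a)N=0$. Because $a\in N$, the identity $(1-a)+a=1$ exhibits $(1-a)$ and $N$ as comaximal, so $V(1-a)\cap V(N)=\emptyset$; meanwhile $(1-a)N=0$ yields $\Spec A=V(1-a)\cup V(N)$. Connectedness of $A$ (hence of $\Spec A$) forces one of these two disjoint closed pieces to be empty. Since $N$ is proper, $V(N)\neq\emptyset$, so $V(1-a)=\emptyset$, i.e.\ $1-a$ is a unit in $A$; then $(1-a)N=0$ collapses to $N=0$, the required contradiction.

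The step I expect to be the subtle one is the passage $a=\sum f_i^2\in N\Rightarrow f_i\in N$ via the rigidity of sums of squares in formally real fields — this is the only genuinely real-algebraic ingredient of the proof, and it is what makes the argument work for real primes specifically rather than arbitrary primes. Once $N=N^2$ is in hand, the rest is a clean application of Nakayama combined with the connectedness of $\Spec A$ that forces a self-reproducing proper ideal to vanish.
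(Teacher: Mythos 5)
The paper cites this lemma from Scheiderer's \emph{Sums of squares of regular functions on real algebraic varieties} without reproducing a proof, so there is no in-paper argument to compare against; your proof is correct and is, in substance, Scheiderer's own. The two real ingredients are exactly as you identify them: anisotropy of sums of squares over a real prime (applied to every real prime at once) gives $N=N^2$ for the real nilradical $N=\bigcap_{\fp\ \text{real}}\fp$, and then the determinant-trick form of Nakayama produces $a\in N$ with $(1-a)N=0$, so that $\Spec A=V(1-a)\sqcup V(N)$; connectedness plus $V(N)\neq\emptyset$ (from $\Sper A\neq\emptyset$) forces $1-a$ to be a unit and hence $N=0$, contradicting non-real-reducedness.
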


\begin{Cor}\label{Prop:Nonrealcomponents}
Let $V$ be an affine $R$-variety and let $V_r$ be the union of all real irreducible components, $V_{nr}$ the union of all non-real irreducible components of $V$. Then psd$=$sos in $R[V]$ if and only if $V_{nr}(R)=\emptyset$, $V_r\cap V_{nr}=\emptyset$, and psd$=$sos in $R[V_r]$. 
\end{Cor}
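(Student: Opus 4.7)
The plan is to prove both implications using the preceding lemma as the key tool, with the reverse direction reducing to a componentwise calculation and the forward direction being three separate applications of that lemma.

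For the \emph{if} direction, the hypotheses $V=V_r\cup V_{nr}$ and $V_r\cap V_{nr}=\emptyset$ together yield $\sI_{V_r}+\sI_{V_{nr}}=R[V]$, so by the Chinese Remainder Theorem $R[V]\isom R[V_r]\times R[V_{nr}]$. I would then observe that every element of $R[V_{nr}]$ is already a sum of squares: since $V_{nr}(R)=\emptyset$, the real Nullstellensatz gives $-1\in\sum R[V_{nr}]^2$, after which the identity $f=((f+1)/2)^2+((f-1)/2)^2\cdot(-1)$ expresses any $f\in R[V_{nr}]$ as a sum of squares. Given a psd element $(f,g)\in R[V_r]\times R[V_{nr}]$, the first coordinate $f$ is psd on $V_r(R)=V(R)$ and hence sos by assumption, the second is sos automatically, so $(f,g)$ is sos in $R[V]$.

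For the \emph{only if} direction, I would establish the three conditions in turn. \emph{First}, $V_r\cap V_{nr}=\emptyset$: if not, some connected component $W$ of $V$ contains both a real and a non-real irreducible component, so $R[W]$ is connected noetherian with $\Sper R[W]\neq\emptyset$ but not real reduced; the preceding lemma produces a psd non-sos element of $R[W]$ which, extended by zero across the remaining connected components of $V$, contradicts the hypothesis. \emph{Second}, $V_{nr}(R)=\emptyset$: were some non-real component to carry a real point $P$, the connected component $W$ of $V$ through $P$ would (by the first step) lie entirely inside $V_{nr}$, so $R[W]$ is again connected noetherian, not real reduced, with nonempty real spectrum, and the same lemma applies. \emph{Third}, once the first two conditions hold we have $R[V]\isom R[V_r]\times R[V_{nr}]$ and $V(R)=V_r(R)$, so any psd $f\in R[V_r]$ lifts to the psd element $(f,0)\in R[V]$, which is sos by hypothesis, and projecting to the first factor expresses $f$ as a sum of squares in $R[V_r]$.

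No serious obstacle is anticipated. The only point requiring care is to always work on a single connected component of $V$ before invoking the lemma, so that its connectedness hypothesis and the simultaneous conditions ``not real reduced'' and ``$\Sper\neq\emptyset$'' both hold.
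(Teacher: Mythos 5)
Your proof is correct and follows essentially the same route as the paper's: the forward direction is the cited lemma of Scheiderer applied to an appropriate connected component (the paper combines your first two cases into the single observation that either hypothesis produces a non-real connected component with a real point), and the reverse direction is the product decomposition $R[V]\isom R[V_r]\times R[V_{nr}]$ plus the real Nullstellensatz applied to $R[V_{nr}]$.
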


\begin{proof}
Both $V_{nr}(R)\neq\emptyset$ or $V_r\cap V_{nr}\neq\emptyset$ imply that there exists a connected component $V'$ of $V$ such that $V'$ is not real but $V'(R)\neq\emptyset$, so that psd$\neq$sos in $R[V']$ by the lemma. As noted above, this implies psd$\neq$sos in $R[V]$. Conversely, if $V_{nr}(R)=\emptyset$ and $V_r\cap V_{nr}=\emptyset$, then every element of $R[V_{nr}]$ is a sum of squares in $R[V_{nr}]$ by the real Nullstellensatz and $R[V]\isom R[V_r]\times R[V_{nr}]$, so that psd$=$sos in $R[V_r]$ is necessary and sufficient for psd$=$sos in $R[V]$.
\end{proof}

\section{Sums of squares on curves}\label{Sec:SOSCurves}

Let $C$ be an affine curve over $R$ with irreducible components $C_1,\dots,C_m$. We begin by showing that psd$=$sos in $R[C]$ implies psd$=$sos in $R[C_1],\dots,R[C_m]$. We will need the following

\begin{Prop}[Scheiderer \cite{MR2020709}, Cor.~4.22]\label{Prop:BadRealSing}
Let $C$ be an affine curve over $R$. If $C$ has a real singular point
that is not an ordinary multiple point with independent tangents, then
psd$\neq$sos in $R[C]$.
\end{Prop}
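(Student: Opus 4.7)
My plan rests on a clean local condition for being a sum of squares: if $f \in \fm_P \subset \wh\sO_{C,P}$ and $f = \sum f_i^2$, then each $f_i(P) = 0$ forces $f_i \in \fm_P$ and hence $f \in \fm_P^2$. Since $R[C] \to \wh\sO_{C,P}$ is a ring homomorphism, it therefore suffices to find $f \in R[C]$ that is (i) non-negative on $C(R)$, (ii) vanishing at $P$, and (iii) whose image in $\wh\sO_{C,P}$ lies outside $\fm_P^2 \wh\sO_{C,P}$.

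The crux is that at a real point which \emph{is} an OMPIT, conditions (i) and (ii) together force (iii) to fail: Lemma~\ref{Lemma:OMPIT} gives $\wh\sO_{C,P} \isom R[[x_1,\ldots,x_n]]/(x_ix_j : i<j)$ with $\fm_P^2 = (x_1^2,\ldots,x_n^2)$, and a branch-wise order-of-vanishing argument shows that any psd $f \in \fm_P$ must vanish to order $\geq 2$ on each real branch, hence lies in $\fm_P^2$. When $P$ is a real singularity that is not OMPIT, by the same lemma the completion differs from the OMPIT model in one of three ways: (a) some formal branch is analytically singular (cusp-like); (b) two smooth formal branches share a tangent direction (tacnode-like); or (c) some formal branches come as non-real complex-conjugate pairs. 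In each case one can exhibit an $f \in \fm_P \setminus \fm_P^2$ that is psd on $C(R)$ near $P$: for a cusp $\{y^2 = x^3\}$ take $f = x$ (equal to $t^2$ after normalisation, but $x \notin (x^2,xy) = \fm_P^2$); for the tacnode $\{y(y-x^2) = 0\}$ take $f = y$ (zero on one branch, $x^2$ on the other); for non-real branches such as at the isolated singular point of $\{y^2 = x^2(x-1)\}$ take $f = x$ (nonnegative globally since $C(R)$ consists of $\{(0,0)\}$ together with the branch $x \geq 1$). In each prototype, direct computation verifies $f \notin \fm_P^2$, and the general case of (a), (b), (c) reduces to its prototype via a suitable formal change of coordinates.

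The remaining step is globalisation: lifting the local $f$ to an element of $R[C]$ which is non-negative on all of $C(R)$, not merely near $P$, without altering its germ in $\fm_P \setminus \fm_P^2$. I would use Chinese-remainder-style surjectivity of $R[C] \to \wh\sO_{C,P}/\fm_P^N$ for large $N$, combined with the vanishing ideal of other irreducible components of $C$, to realise the prescribed germ; and then add a correction that vanishes to order $\geq 3$ at $P$ (hence leaves $f$ untouched modulo $\fm_P^2$) and that absorbs the negative values of $f$ on $C(R) \setminus \{P\}$, using Cor.~\ref{Cor:PoU} after localising to a virtually compact neighbourhood of $P$ via Lemma~\ref{Lemma:VirtualCompactness}. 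The principal obstacle is this globalisation step, since partition-of-unity tools are available for virtually compact curves but not directly for arbitrary ones; a preliminary reduction (for example, multiplying by a high power of a function vanishing at the non-real points at infinity as in Lemma~\ref{Lemma:VirtualCompactness}(6)) may be required before the partition argument applies.
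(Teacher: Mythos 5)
The paper does not actually prove this statement: it is quoted as Scheiderer (\cite{MR2020709}, Cor.~4.22), so there is no in-paper proof to compare against. Judging your proposal on its own terms, the reduction to finding a psd $f\in\fm_P\setminus\fm_P^2$ is correct as far as it goes, and each of your three prototypes is analysed correctly, but the case analysis (a), (b), (c) is \emph{not} exhaustive, and in the missing case your obstruction vanishes, so the plan cannot be completed as written.

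A real singular point can fail to be an OMPIT even when every formal branch is smooth and real and no two branches share a tangent direction: the ``independent tangents'' in the definition means \emph{linearly} independent, not merely pairwise distinct. The simplest example is three concurrent coplanar lines, $C=\{xy(x+y)=0\}\subset\A^2_R$ with $P$ the origin. Here $\wh\sO_{C,P}=R[[x,y]]/(xy(x+y))$ has embedding dimension $2$, whereas the OMPIT model with three branches has embedding dimension $3$, so $P$ is not an OMPIT; yet none of your cases (a)--(c) applies. Worse, your proposed obstruction is empty here: $\fm_P^2=(x^2,xy,y^2)$ consists exactly of classes with trivial linear part, so $f\in\fm_P\setminus\fm_P^2$ forces $f=ax+by+O(2)$ with $(a,b)\neq 0$, and then $f$ changes sign along one of the two coordinate-axis branches near $P$. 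Hence every psd $f\in\fm_P$ already lies in $\fm_P^2$, and the test in your first paragraph detects nothing. Nonetheless psd$\neq$sos does hold: under the identification of $R[C]$ with $\{(f_1,f_2,f_3)\in R[t_1]\times R[t_2]\times R[t_3] : f_1(0)=f_2(0)=f_3(0),\ f_3'(0)=f_1'(0)-f_2'(0)\}$, a sum of squares $f=\sum g_j^2$ vanishing at $P$ has $g_j=(c_{j1}t_1,\,c_{j2}t_2,\,(c_{j1}-c_{j2})t_3)+\cdots$, so the leading quadratic coefficients $b_i$ of $f$ satisfy $(\sqrt{b_1}-\sqrt{b_2})^2\le b_3\le(\sqrt{b_1}+\sqrt{b_2})^2$ by Cauchy--Schwarz. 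The function $f=x^2+y^2-98xy$ has $(b_1,b_2,b_3)=(1,1,100)$, is psd on $C(R)$, and violates this bound, so it is not sos. This is a second-order obstruction living in $\fm_P^2/\fm_P^3$, invisible to your $\fm_P/\fm_P^2$ criterion; making the local analysis work in general requires a full description of sums of squares in the complete local ring, which is essentially what Scheiderer's proof does. The globalisation step you flag is a second, genuine difficulty, but the incomplete case analysis is the more fundamental gap.
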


\begin{Lemma}\label{Lem:LiftingOnCurves}
Let $C$ be an affine curve over $R$ all of whose real intersection
points are ordinary multiple points with independent tangents. Let
$C'\subset C$ be a closed subcurve of $C$. Then every psd function on $C'$
can be extended to a psd function on $C$, i.e.~every psd element of
$R[C']$ is the image of a psd element of $R[C]$ under the restriction map $R[C]\into R[C']$.
\end{Lemma}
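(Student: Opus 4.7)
The plan is to write $C = C' \cup C''$, where $C''$ is the union of the irreducible components of $C$ not lying in $C'$, and to set $Z = C' \cap C''$ (scheme-theoretic intersection, a finite zero-dimensional scheme over $R$). Since $R[C]$ is reduced, $\sI_{C'} \cap \sI_{C''} = 0$, and the standard exact sequence
\[
0 \to R[C] \to R[C'] \times R[C''] \to R[Z] \to 0
\]
identifies $R[C]$ with the fibre product $R[C'] \times_{R[Z]} R[C'']$. Extending a psd $g \in R[C']$ to a psd $f \in R[C]$ therefore reduces to producing $f_0 \in R[C'']$ with $f_0 \ge 0$ on $C''(R)$ and $f_0|_Z = g|_Z$ in $R[Z]$; the pair $(g, f_0)$ then yields the required $f$.

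To construct such an $f_0$ it is enough to write $\alpha := g|_Z$ as a sum of two squares $c^2 + d^2$ in $R[Z]$: lifting $c, d$ to $h_1, h_2 \in R[C'']$ via the surjection $R[C''] \twoheadrightarrow R[Z]$ and setting $f_0 = h_1^2 + h_2^2$ is then automatic. Being Artinian, the ring $R[Z]$ decomposes as a finite product $\prod_\fp A_\fp$ of local Artinian $R$-algebras indexed by the closed points of $Z$, so the claim can be checked factor by factor.

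Suppose first that $\fp$ is a real closed point of $Z$, so $\fp$ is a real intersection point of $C$ and, by hypothesis, an ordinary multiple point of $C$ with independent tangents. The splitting of the branches of $C$ at $\fp$ between $C'$ and $C''$ forces $\sI_{C'} + \sI_{C''} = \fm_\fp$ locally, so the intersection is transversal at $\fp$ and $A_\fp = R$; and since $g$ is psd, $\alpha_\fp = g(\fp) \ge 0$, hence $\alpha_\fp = \bigl(\sqrt{\alpha_\fp}\bigr)^2$. Suppose instead that $\fp$ has residue field $R(\sqrt{-1})$. Because $A_\fp$ is Artinian local, hence complete, Hensel's lemma lifts a root of $X^2 + 1$ from the residue field to $A_\fp$, so $\sqrt{-1} \in A_\fp$, and the identity
\[
b = \bigl(\tfrac{b+1}{2}\bigr)^2 + \bigl(\tfrac{\sqrt{-1}\,(b-1)}{2}\bigr)^2
\]
writes every $b \in A_\fp$---and in particular $\alpha_\fp$---as a sum of two squares in $A_\fp$.

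The main obstacle I foresee is the treatment of the non-real points: the hypothesis says nothing about them and the scheme $Z$ may genuinely be non-reduced there, so $\alpha_\fp$ need not admit a square root in $A_\fp$. What saves the day is that $\sqrt{-1}$ is always available in a non-real local ring, which makes the above sum-of-two-squares identity work with no positivity input whatsoever---consistent with the fact that non-real points do not contribute to psd-ness on $C(R)$. For real points, by contrast, the OMPIT hypothesis is precisely what delivers the reducedness of $Z$, reducing the problem there to extracting one square root of a non-negative element of $R$.
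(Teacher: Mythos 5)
Your proof is correct and follows essentially the same strategy as the paper's: reduce to writing $g|_Z$ as a sum of squares in $R[Z]$, using the OMPIT hypothesis to get $A_\fp=R$ at real points of $Z$ and the presence of $\sqrt{-1}$ to make every element a sum of two squares at non-real points. You also make explicit the Hensel's-lemma step behind the containment $R(\sqrt{-1})\subset A_\fp$, which the paper asserts without comment.
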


\begin{proof}
Let $f$ be a psd function on $C'$, and let $D$ be the union of all
irreducible components of $C$ not contained in $C'$. Let $Z=C'\cap D$
be the scheme-theoretic intersection. We have to find a psd function
on $D$ that agrees with $f$ on the closed subscheme $Z$. If $Z$ is
supported on $r$ real points and $s$ non-real points, then the assumption on intersection points implies that the ring of
regular functions $R[Z]$ of $Z$ is a direct product
\[
R[Z]\isom \underbrace{R\times\cdots\times R}_{r}\times
A_1\times\cdots\times A_s
\]
with $R(\sqrt{-1})\subset A_i$ for all $i\in\{1,\dots,s\}$. This implies $\sum A_i^2=A_i$ for all $i$, and since $f$
takes only non-negative values at the real points of $Z$, it follows
that the class of $f$ in $R[Z]$ is a sum of squares in
$R[Z]$. Therefore, it can be lifted to a sum of squares in $R[D]$
which, in particular, is psd on $D$.
\end{proof}

\begin{Example*}
The condition on intersection points cannot be dropped in general: If
$C=\{y(y^2-x^3)=0\}$, the function $x$ is psd on
$C'=\{y^2-x^3=0\}$ but cannot be extended to a psd
function on $C$.
\end{Example*}

\begin{Prop}\label{Prop:psdsos_passestoirrcomp}
Let $C$ be an affine curve over $R$. If psd$=$sos in $R[C]$,
then psd$=$sos in $R[C']$ for every closed subcurve $C'$ of
$C$.
\end{Prop}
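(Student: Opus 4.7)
The plan is to combine the two preceding results: Proposition \ref{Prop:BadRealSing} gives us structural control over the real singularities of $C$ as soon as we know psd$=$sos in $R[C]$, and Lemma \ref{Lem:LiftingOnCurves} then lets us lift psd functions from subcurves back to $C$, where we can invoke the hypothesis directly.

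More concretely, suppose psd$=$sos in $R[C]$, and let $C'$ be a closed subcurve of $C$. Take any $f\in R[C']$ which is psd on $C'(R)$. First, I would observe that by Proposition \ref{Prop:BadRealSing}, every real singular point of $C$ must be an ordinary multiple point with independent tangents, since otherwise psd$\neq$sos in $R[C]$. In particular, every real intersection point between irreducible components of $C$ is of this type, so the hypothesis of Lemma \ref{Lem:LiftingOnCurves} is satisfied for $C$ (applied to the subcurve $C'$). The lemma then yields an element $F\in R[C]$ with $F|_{C'}=f$ and $F$ psd on $C(R)$.

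Since psd$=$sos in $R[C]$, we can write $F=F_1^2+\cdots+F_n^2$ for some $F_1,\dots,F_n\in R[C]$. Restricting to $C'$ gives $f=(F_1|_{C'})^2+\cdots+(F_n|_{C'})^2$, which exhibits $f$ as a sum of squares in $R[C']$. This completes the argument.

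The only nontrivial point is checking that Lemma \ref{Lem:LiftingOnCurves} is applicable, which is precisely what Proposition \ref{Prop:BadRealSing} provides; no further obstacle arises because the restriction map $R[C]\to R[C']$ is a ring homomorphism and so automatically preserves sums of squares. The argument is essentially a two-line combination of the two stated results.
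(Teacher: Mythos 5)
Your proof is correct and follows essentially the same route as the paper: both proofs combine Proposition~\ref{Prop:BadRealSing} with Lemma~\ref{Lem:LiftingOnCurves}. The only cosmetic difference is that the paper phrases it as a case split (either the singularities are good and the lemma applies, or else psd$\neq$sos and the statement is vacuous), whereas you use Proposition~\ref{Prop:BadRealSing} in contrapositive form to deduce directly from the hypothesis that the lemma's conditions hold; these are logically identical.
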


\begin{proof}
If all real singularities of $C$ are ordinary multiple points with
independent tangents, then every psd function $f$ on $C'$ can be
extended to a psd function $g$ on $C$ by Lemma
\ref{Lem:LiftingOnCurves}. Then $g$ is a sum of squares in $R[C]$ by
hypothesis, hence $f$ is a sum of squares in $R[C']$. If $C$ has a
real singular point that is not an ordinary multiple point with
independent tangents, then psd$\neq $sos in $R[C]$ by
Prop.~\ref{Prop:BadRealSing}, so the statement is empty.
\end{proof}

\begin{Prop}\label{Prop:redcurvenonrealintersection}
Let $C$ be a real curve over $R$, and assume that $C$ has a non-real
intersection point. Then psd$\neq$sos in $R[C]$.
\end{Prop}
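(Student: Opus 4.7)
The plan is to construct an explicit psd element of $R[C]$ that is forced not to be a sum of squares by the non-real valuation at the intersection point. First, by Proposition~\ref{Prop:BadRealSing} I may assume that every real singular point of $C$ is an ordinary multiple point with independent tangents (otherwise the conclusion is immediate), and by Proposition~\ref{Prop:psdsos_passestoirrcomp} it suffices to prove the claim for the union $C_1 \cup C_2$ of two irreducible components of $C$ through the non-real point $P$; so assume $C = C_1 \cup C_2$. Let $v_P$ denote the discrete valuation on $R(C_1)$ attached to $P$ (or, if $C_1$ is singular at $P$, to some preimage of $P$ on the normalization). Since $\kappa(P) = R(\sqrt{-1})$ the valuation $v_P$ is not real, but the ultrametric inequality nonetheless gives $v_P(a_1^2 + \dots + a_r^2) \ge 2\min_i v_P(a_i)$ for any $a_1,\dots,a_r \in R(C_1)$. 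Write $m \ge 1$ for the local order at $P$ of the ideal of the scheme-theoretic intersection $C_1 \cap C_2$ inside $R[C_1]$.

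The key ingredient is a psd element $g \in R[C_1]$ that lies in this intersection ideal and satisfies $v_P(g) = m$. I would construct it starting from a local uniformizer $\pi$ at the geometric point above $P$ in $R[C_1] \otimes_R R(\sqrt{-1})$: decomposing $\pi = a + \sqrt{-1}\,b$ with $a,b \in R[C_1]$ near $P$, the norm $N(\pi) = a^2 + b^2 \in R[C_1]$ is psd and has $v_P$-order exactly one, so its $m$-th power is psd with $v_P$-order $m$. Multiplying by analogous norms at the remaining non-real points of $C_1 \cap C_2$, and by squares of functions vanishing at the (necessarily transversal) real intersection points, produces $g$ with all required properties. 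The pair $(g,0) \in R[C_1] \times R[C_2]$ then determines an element $f \in R[C]$ via the fibre-product identification $R[C] = R[C_1] \times_{R[C_1 \cap C_2]} R[C_2]$, and $f$ is psd on $C(R)$ because $g$ is psd on $C_1(R)$ and $0$ is psd on $C_2(R)$.

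Now suppose $f = \sum_j s_j^2$ in $R[C]$. Restricting to $C_2$ gives $\sum_j (s_j|_{C_2})^2 = 0$ in the integral domain $R[C_2]$; since $C_2$ is real irreducible, its fraction field $R(C_2)$ is a real field, so each $s_j|_{C_2}$ vanishes. Hence every $s_j$ lies in the kernel of $R[C] \to R[C_2]$, whose image under restriction to $C_1$ is precisely the ideal of $C_1 \cap C_2$ in $R[C_1]$; consequently $v_P(s_j|_{C_1}) \ge m$ for all $j$. The ultrametric inequality then yields
\[
v_P(f|_{C_1}) \;=\; v_P\Bigl(\sum_j (s_j|_{C_1})^2\Bigr) \;\ge\; 2m,
\]
contradicting $v_P(f|_{C_1}) = v_P(g) = m < 2m$, so $f$ is not a sum of squares. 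The main difficulty of this approach lies in producing $g$: one must simultaneously achieve the prescribed non-real valuation $v_P(g) = m$, keep $g$ non-negative on all of $C_1(R)$, and arrange the correct vanishing at every other point of $C_1 \cap C_2$. The norm trick $a^2 + b^2$ supplies a psd local uniformizer at any non-real closed point, which handles the transversal case $m=1$, and appropriate products and even powers take care of higher multiplicities and of the further intersection points.
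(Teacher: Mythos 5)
Your approach is genuinely different from the paper's: you work with a discrete valuation at (a preimage of) the non-real intersection point $P$ and argue by comparing orders of vanishing, whereas the paper works purely with ideals and exploits the algebraic structure of a residue ring. Your contradiction step is fine as far as it goes: restricting to $C_2$ to see that each $s_j\vert_{C_1}$ lies in the ideal $J$ of the scheme-theoretic intersection, and then applying the ultrametric inequality $v_P(\sum a_j^2)\ge 2\min_j v_P(a_j)$. But the entire argument rests on producing a psd element $g\in J$ with $v_P(g)=m$ (or at least $<2m$), and that is exactly where the gap lies. Prop.~\ref{Prop:BadRealSing} only rules out bad \emph{real} singularities, so $C_1$ may very well be singular at the non-real point $P$. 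In that case $\sO_{C_1,P}$ is not a DVR, the local ideal $J_P$ is not cut out by a single order-of-vanishing condition (there may be several branches at $P$, plus conductor conditions relating them), and a uniformizer $\pi$ at a geometric point of $\wt C_1\otimes R(\sqrt{-1})$ lying over $P$ need not descend to a regular function on $C_1\otimes R(\sqrt{-1})$ at all. So the norm construction $g=N(\pi)^m\cdot(\cdots)$ neither obviously lands in $J$ nor obviously achieves $v_P(g)=m$. You explicitly flag that ``the main difficulty of this approach lies in producing $g$,'' and I agree: in the singular non-real case the sketch does not close that gap.

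The paper's proof gets around precisely this difficulty by never constructing a psd function with a prescribed valuation. Decomposing the scheme-theoretic intersection $Z=S\cup T$ into its real and non-real parts, with ideals $I_S, I_T\subset R[C_1]$, one picks $h\in I_T\setminus I_T^2$ (which exists by Krull intersection). Since $T$ has non-real support, $\sqrt{-1}\in R[C_1]/I_T^2$ and hence \emph{every} element there is a sum of squares; by the Chinese Remainder Theorem, $(0\bmod I_S,\ h\bmod I_T^2)$ lifts to an honest sum of squares $f\in\sum R[C_1]^2$. This $f$ is automatically psd, lies in $J=I_S\cap I_T$, and is not in $I_T^2$. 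Extending $f$ by zero to $C_2$ gives the psd element $F$, and if $F=\sum F_i^2$ then each $F_i\vert_{C_1}\in J$, forcing $f\in J^2\subset I_T^2$, a contradiction. Structurally, $h\notin I_T^2$ is the analogue of your $v_P(g)=m$, and $J^2\subset I_T^2$ is the analogue of your $v_P(\sum s_j^2)\ge 2m$; the crucial advantage is that the paper obtains $f$ directly as a sum of squares by algebra in the quotient, so psd-ness is free and there is no singular case to worry about. If you want to salvage your valuation route, you would have to handle non-real singularities of $C_1$ explicitly (or reduce to the case where $C_1$ is non-singular at the non-real intersection points), and it is not clear this is easier than adopting the ideal-theoretic argument.
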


\begin{proof}
Let $C_1$ and $C_2$ be two distinct irreducible components of $C$ that
intersect at a non-real point. By
Prop.~\ref{Prop:psdsos_passestoirrcomp}, it suffices to show that
psd$\neq$sos in $R[C_1\cup C_2]$. We may therefore assume that
$C=C_1\cup C_2$.

Let $I_j=\sI_{R[C]}(C_j)$ be the vanishing ideal of $C_j$ inside
$R[C]$ ($j=1,2$), and let $Z$ be the closed subscheme of $C_1$
determined by the ideal $J=(I_1+I_2)/I_1$ of $R[C_1]=R[C]/I_1$
(i.e.~$Z$ is the scheme-theoretic intersection of $C_1$ and $C_2$
inside $C_1$). Write $Z=S\cup T$ with closed subschemes $S$ and $T$ of
$C$ such that $S$ is supported on real points and $T$ is supported on
non-real points. Let $I_S=\sI_{R[C_1]}(S)$ and $I_T=\sI_{R[C_1]}(T)$
be the vanishing ideals of $S$ and $T$ in $R[C_1]$, so that
$J=I_S\cap I_T$. Since $T$ is non-empty by hypothesis, we have
$(0)\subsetneq I_T\subsetneq R[C_1]$. Put $A=R[C_1]/I_T^2$, and choose
$h\in I_T\setminus I_T^2$.  (Note that $I_T\neq I_T^2$ by the Krull
intersection theorem; see Bourbaki \cite{MR0171800}, \S 3.2.) Since
$T$ has non-real support, we have $\sqrt{-1}\in A$, so every element
of $A$ is a sum of squares; in particular, $h+I_T^2$, the class of $h$
in $A$, is a sum of squares in $A$. Therefore, the element
$(0+I_S,h+I_T^2)\in (R[C_1]/I_S)\times A$ is a sum of squares in
$(R[C_1]/I_S)\times A\isom R[C_1]/(I_S\cap I_T^2)$. Thus there exists
a sum of squares $f\in\sum R[C_1]^2$ that restricts to
$(0+I_S,h+I_T^2)$ modulo $I_S\cap I_T^2$. Since $h\in I_T$, we have
$f\in J$.

Now choose $F\in I_2\subset R[C]$ such that $F|_{C_1}=f$. Clearly,
$F$ is psd on $C(R)$. But we claim that $F$ cannot be a sum
of squares in $R[C]$. For if it were, say $F=\sum_{i=1}^r F_i^2$,
$F_i\in R[C]$, it would follow that $F_i\in I_2$ for all
$i=1,\dots,r$, since $C_2$ is real. This would imply
$f=\sum_{i=1}^r f_i^2$ with $f_i=F_i|_{C_1}$. From $F_i\in I_2$ we
could then conclude $f_i\in J$ for all $i=1,\dots,r$, hence $f\in J^2\subset (I_S\cap I_T^2)$, contradicting
the choice of $h$.
\end{proof}

In classifying all curves for which psd$=$sos, we first consider the
virtually compact case, i.e.~the case when every irreducible component
admits a non-constant bounded function. By Lemma
\ref{Lemma:VirtualCompactness}, this is equivalent to saying that all
irreducible components of $C$ have a non-real point at infinity. For
$R=\R$, Scheiderer has proved the following:

\begin{Thm}[Scheiderer \cite{MR2020709}, Cor.~4.15]\label{Thm:SOSredcurvVC}
Let $C$ be an affine curve over $\R$ with $C(\R)$ virtually
compact. If $C$ has no other real singularities than ordinary multiple
points with independent tangents, then every psd function in $\R[C]$
with only finitely many zeros is a sum of squares in $\R[C]$.
\end{Thm}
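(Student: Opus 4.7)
The plan is threefold: reduce to the compact case via virtual compactification, pass to the normalization $\pi\colon C^{\nu}\to C$ so as to apply the known irreducible result of Scheiderer on each component, and then glue the resulting representations at the singular points of $C$.

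\emph{Step 1 (compact reduction).} By Lemma~\ref{Lemma:VirtualCompactness}(2),(4), virtual compactness of $C(\R)$ means that $\wt C:=\Spec B(C)$ is an affine curve with compact real points, and $C\injects\wt C$ is an open embedding whose complement $\wt C\setminus C$ is a finite set of real points. One can choose $h\in B(C)$ vanishing on $\wt C\setminus C$ and strictly positive on $C(\R)$; by Lemma~\ref{Lemma:VirtualCompactness}(6) there is $N\ge 0$ with $h^{2N}f\in B(C)$. Since any representation $h^{2N}f=\sum s_i^2$ in $B(C)$ yields $f=\sum(s_i/h^N)^2$ in $\R[C]$, I may replace $C$ by $\wt C$ and assume $C(\R)$ compact.

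\emph{Step 2 (normalization).} Let $\pi\colon C^{\nu}\to C$ be the normalization. The hypothesis that every real singularity of $C$ is an ordinary multiple point with independent tangents, together with Lemma~\ref{Lemma:OMPIT}, extends the argument of Cor.~\ref{Cor:FnOnRedCurves} to the identification
\[
\R[C]\isom\bigl\{g\in\R[C^{\nu}]\:\bigl|\:g(Q)=g(Q')\ \text{whenever}\ \pi(Q)=\pi(Q')\ \text{is singular in}\ C\bigr\}.
\]
Each irreducible component of $C^{\nu}$ is a non-singular affine curve with compact real points and therefore satisfies psd$=$sos by the irreducible case cited in the introduction. Writing $\pi^*f|_{C^{\nu}_i}=\sum_{j=1}^n g_{ij}^2$ with a common $n$ (by padding with zeros) gives sums of squares on each component of $C^{\nu}$ separately.

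\emph{Step 3 (gluing).} For $(g_{ij})$ to descend to $\R[C]$ one needs, over every preimage set $\{Q_1,\dots,Q_m\}=\pi^{-1}(P)$ of a singular point $P\in C$, that the values $g_{i(k),j}(Q_k)$ coincide as $k$ varies, for each fixed $j$. Since the vectors $v_k=(g_{i(k),j}(Q_k))_j\in\R^n$ all have euclidean length $\sqrt{f(P)}$, there exist orthogonal matrices $B_k\in O_n(\R)$ carrying them to a common target vector, exactly as in the proof of Prop.~\ref{Prop:redcurvetransversal}; applying $B_k$ to the summands on the $k$-th branch makes them agree at $P$. The main obstacle is carrying out these rotations simultaneously at all finitely many singular points $P_1,\dots,P_r$, because an $O_n(\R)$-correction aimed at $P_l$ typically disturbs the equalization already achieved at some $P_{l'}$. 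I would handle this by an induction on $l$: having glued at $P_1,\dots,P_{l-1}$, use Cor.~\ref{Cor:PoU} to choose non-negative bounded $u_l$ with $u_l(P_l)=1$ and $u_l(P_i)=0$ for $i<l$, interpolate between the current summands and their $B_k$-rotates via $u_l$, and absorb the resulting discrepancy (which is psd with finitely many zeros, concentrated at $P_1,\dots,P_l$) into a further sum of squares by a repeated application of the smooth compact irreducible case. The hypothesis that $f$ has only finitely many zeros is essential here, since it guarantees that each intermediate remainder retains this property and therefore admits a further sum-of-squares representation, closing the induction.
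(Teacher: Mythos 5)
This theorem is cited by the paper from Scheiderer \cite{MR2020709}, Cor.~4.15, and is not reproved in the text, so I can only assess your argument on its own merits rather than against the paper's proof. Your Step~2 contains a genuine gap: the identification
\[
\R[C]\isom\bigl\{g\in\R[C^{\nu}]\:\bigl|\:g(Q)=g(Q')\ \text{whenever}\ \pi(Q)=\pi(Q')\ \text{is singular in}\ C\bigr\}
\]
holds only when \emph{every} singular point of $C$ is an ordinary multiple point with independent tangents (this is what Cor.~\ref{Cor:FnOnRedCurves} and Lemma~\ref{Lemma:OMPIT} give). The theorem, however, restricts only the \emph{real} singularities; $C$ may have non-real singularities of arbitrary type (cusps, tacnodes, higher contact), and at such points the conductor of $\pi$ is not radical, so $\R[C]$ is strictly smaller than the fibre-product ring you write down, and a sum of squares in $\R[C^\nu]$ satisfying your agreement conditions need not descend to $\R[C]$. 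This is precisely the role of the ``only finitely many zeros'' hypothesis, which your argument never actually exploits structurally: the known proof goes through a local-global principle, showing $f$ is a sum of squares in each completed local ring $\wh\sO_{C,P}$ at every zero $P$ of $f$ (automatic at non-real $P$, where $\sqrt{-1}\in\wh\sO_{C,P}$, and via Lemma~\ref{Lemma:OMPIT} at real $P$), and then globalizing; the finiteness of the zero set is what lets one ignore the non-real singular points where $f$ does not vanish.

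Step~3 is also broken as written. If $\sum_j g_{kj}^2=f$ and $\sum_j(B_kg_k)_j^2=f$ on branch $k$, the linearly interpolated tuple $\tilde g_k=u_l\,(B_kg_k)+(1-u_l)\,g_k$ satisfies $\sum_j\tilde g_{kj}^2\le f$ with equality only where $B_kg_k=g_k$, so the identity $\sum_j\tilde g_{kj}^2=f$ is destroyed everywhere the rotation is nontrivial. Worse, the discrepancy $f-\sum_j\tilde g_{kj}^2$ depends on the branch $k$ through $B_k$, and since you have no control over the values $u_l(P_i)$ at the not-yet-handled points $P_i$ with $i>l$, these branchwise discrepancies need not agree at those points and hence need not glue to an element of $\R[C]$ at all; the ``absorb into a further sum of squares'' step therefore has nothing to apply to. If all singularities were in fact ordinary multiple points with independent tangents, the clean fix is not inductive rotation but the lemma preceding Prop.~\ref{Thm:SOSOneCompactComp}: pick a single target vector $a^{(P)}\in\R^n$ with $\|a^{(P)}\|^2=f(P)$ for each singular $P$, and use that lemma to produce, on each component of $C^\nu$, a sum-of-squares representation of $\pi^*f$ taking the prescribed values simultaneously at every preimage of a singular point; these tuples then glue with no interference. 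But that fix does not repair the Step~2 gap, which remains the essential obstacle.
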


\begin{Cor}\label{Cor:SOSirrcurvVC}
Let $C$ be an irreducible affine curve over $\R$ with $C(\R)$ virtually compact. If $C$ has no other real singularities than ordinary multiple points with independent tangents, then psd$=$sos in $\R[C]$.\qed
\end{Cor}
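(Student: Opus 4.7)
The plan is to derive this corollary directly from Theorem~\ref{Thm:SOSredcurvVC}, noting that the only gap between the two statements is the "finitely many zeros" hypothesis, which is automatic on an irreducible curve.

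First, I would observe that the hypotheses of Theorem~\ref{Thm:SOSredcurvVC} are satisfied: $C$ is an affine curve over $\R$, $C(\R)$ is virtually compact, and since $C$ is irreducible its only singularities are the ones specified in the assumption, namely ordinary multiple points with independent tangents. Let $f \in \R[C]$ be psd on $C(\R)$. If $f=0$, the result is trivial, so assume $f \neq 0$.

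Next, I would verify that $f$ has only finitely many zeros on $C$. Since $C$ is irreducible of Krull dimension $1$, its coordinate ring $\R[C]$ is an integral domain of dimension $1$; equivalently, every nonzero element of $\R[C]$ lies in only finitely many maximal ideals, so the zero set $V(f) \subset C$ is a proper closed subscheme of a $1$-dimensional irreducible variety, hence is $0$-dimensional and finite. In particular, $f$ has only finitely many zeros in $C(\R)$.

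Therefore Theorem~\ref{Thm:SOSredcurvVC} applies and yields that $f$ is a sum of squares in $\R[C]$, completing the proof. There is no real obstacle here; the corollary is essentially a repackaging of the theorem in the irreducible setting, with the key observation being the dimension-theoretic fact that nonzero regular functions on an irreducible curve have finite zero sets.
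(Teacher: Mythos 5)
Your proof is correct and is exactly the argument the paper intends: the corollary carries a \qed immediately after its statement because it follows from Theorem~\ref{Thm:SOSredcurvVC} once one observes that a nonzero regular function on an irreducible curve has only finitely many zeros, which is precisely the dimension-theoretic observation you supply.
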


\noindent For reducible curves, we have the following

\begin{Thm}\label{Cor:sosredcurvvc}
Let $C$ be a real affine curve over $\R$ with $C(\R)$ virtually
compact. Then psd$=$sos in $\R[C]$ if and only if the following
conditions are satisfied:
\begin{enumerate}
\item All real points of $C$ are ordinary multiple points with
independent tangents.
\item All intersection points of $C$ are real.
\end{enumerate}
\end{Thm}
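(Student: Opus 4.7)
\emph{Necessity.} If condition (1) fails, then $C$ has a real singular point that is not an ordinary multiple point with independent tangents, and Proposition \ref{Prop:BadRealSing} directly yields psd$\neq$sos in $\R[C]$. If condition (2) fails, Proposition \ref{Prop:redcurvenonrealintersection} does the same. So both conditions are necessary.

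\emph{Sufficiency.} Assume (1) and (2) hold, and let $f \in \R[C]$ be psd. The plan is to partition the irreducible components of $C$ according to whether $f$ vanishes identically on them: let $C'$ be the union of the components on which $f|_{C_i} = 0$, and $C''$ the union of the remaining components, so that $f|_{C'} = 0$ while $f|_{C''}$ has only finitely many real zeros (it is nonzero on each irreducible component of $C''$, each of which is one-dimensional). The subcurve $C''$ still satisfies the hypotheses of Theorem \ref{Thm:SOSredcurvVC}: virtual compactness is inherited from $C$ by Lemma \ref{Lemma:VirtualCompactness}, and being an ordinary multiple point with independent tangents is a local property preserved when branches are dropped, as is evident from the explicit form given by Lemma \ref{Lemma:OMPIT}. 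Hence $f|_{C''} = \sum_{k=1}^{n} g_k^2$ for some $g_k \in \R[C'']$.

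At every intersection point $P \in C' \cap C''$, condition (2) forces $P$ to be real, and $f(P) = 0$; hence $\sum_k g_k(P)^2 = 0$ in $\R$ and so $g_k(P) = 0$ for each $k$. Condition (1) moreover guarantees that $C'$ and $C''$ intersect transversally in $C$: at each such $P$ the completed local ring of $C$ is $\R[[x_1,\ldots,x_n]]/(x_ix_j \mid i<j)$, in which $C'$ and $C''$ correspond to a partition of the branches, so locally $\sI_{C'} + \sI_{C''}$ is the maximal ideal. Lemma \ref{Lemma:FunctionsOnTransversalIntersection}, applied to $C = C' \cup C''$, then lets me lift each pair $(0,g_k) \in \R[C'] \times \R[C'']$ to some $\tilde g_k \in \R[C]$ with $\tilde g_k|_{C'} = 0$ and $\tilde g_k|_{C''} = g_k$; restricting separately to $C'$ and $C''$ yields $\sum_{k=1}^{n} \tilde g_k^2 = f$ in $\R[C]$.

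The one technical obstacle is the extension-by-zero across $C'$. It is made legal by the vanishing of each $g_k$ on $C' \cap C''$, which in turn relies both on the reality of those intersection points (condition (2), so that a vanishing sum of real squares forces each summand to vanish) and on the transversal gluing provided by condition (1); either failing would break the construction, which is exactly why both conditions are necessary as well.
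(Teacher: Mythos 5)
Your proof is correct and takes essentially the same approach as the paper: both partition the irreducible components according to whether $f$ vanishes identically, apply Scheiderer's Theorem~\ref{Thm:SOSredcurvVC} on the subcurve where $f$ has only finitely many zeros, use the reality of intersection points to force each square in the resulting representation to vanish there, and extend by zero across the remaining components (the paper phrases the gluing via Cor.~\ref{Cor:FnOnRedCurves} on the full component decomposition, you via Lemma~\ref{Lemma:FunctionsOnTransversalIntersection} on the two-part split $C'\cup C''$, but these are the same mechanism).
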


\begin{proof}
Sufficiency of (1) and (2) follows directly from Scheiderer's theorem: Let $C_1,\dots,C_m$ be the irreducible components of $C$. By
Cor.~\ref{Cor:FnOnRedCurves}, we have
\[
\R[C]\isom\biggl\{(f_i)\in\prod_{i=1}^m\R[C_i]\:\bigl|\: f_i(P)=f_j(P)\text{ for
all }P\in C_i\cap C_j,1\le i,j\le m\biggr\}.
\]
Let $f=(f_1,\dots,f_m)\in\R[C]$ be psd. Upon relabelling, we may
assume that $f$ has only finitely many zeros on $C_1,\dots,C_l$ and
vanishes identically on $C_{l+1},\dots,C_m$ for some $0\le l\le
m$. Put $C'=\bigcup_{i=1}^l C_i$. Then $g=f\vert_{C'}$ is a sum of
squares in $\R[C']$ by the preceding theorem, say $g=\sum g_i^2$,
$g_i\in\R[C']$. For every $P\in C'\cap C_j$, $j > l$, we have
$g(P)=f(P)=0$, and since $P$ is real by hypothesis, it follows that
$g_i(P)=0$ for all $i$. This implies that
$f_i=\bigl(g_i\vert_{C_1},\dots,g_i\vert_{C_l},0,\dots,0\bigr)$ is a
function on $C$, by the above description of $\R[C]$, and that $f=\sum
f_i^2$.

Conversely, condition (1) is necessary by Prop.~\ref{Prop:BadRealSing}, condition (2) by Prop.~\ref{Prop:redcurvenonrealintersection}. 
\end{proof}

\begin{Example}
Let $C=\{(x^2+y^2-1)((x-1)^2+y^2-1)=0\}$,
two intersecting circles in the plane. Then psd$=$sos in $R[C]$.
But psd$\neq$sos in $R[C]$ for $C=\{(x^2+y^2-1)((x-3)^2+y^2-1)=0\}$, since the two circles intersect at a non-real point.
\end{Example}

We now turn to the case furthest from virtual compactness, namely that of a real affine curve $C$ that does not admit any bounded functions ($B(C)=R$). Again, the complete answer is known for the irreducible case, even for an arbitrary real closed ground field:

\begin{Thm}[Scheiderer]\label{Thm:SOSirrCurvesNVC}
Let $C$ be an irreducible affine curve over $R$. Assume that
$B(C)=R$. Then psd$=$sos in $R[C]$ if and only if $C$ is isomorphic to
an open
subcurve of $\A_R^1$. 
\end{Thm}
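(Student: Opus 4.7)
\emph{The ``if'' direction.} Assume $C$ is isomorphic to an open subcurve of $\A_R^1$, and let $X=\P_R^1$ be the smooth projective completion. The hypothesis $B(C)=R$ together with Lemma~\ref{Lemma:VirtualCompactness}(3) forces every point of $X\setminus C$ to be real, so $R[C]\isom R[t][1/h]$ for some $h\in R[t]$ that splits over $R$. Given a psd $f\in R[C]$, write $f=g/h^{2N}$ for some $N\ge 0$; then $g\in R[t]$ is non-negative on the cofinite set $C(R)\subset R$ and hence on all of $R$ by continuity. The classical one-variable theorem over a real closed field (pair real roots of even multiplicity and complete the square on the irreducible quadratic factors) gives $g=g_1^2+g_2^2$, whence $f=(g_1/h^N)^2+(g_2/h^N)^2$.

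\emph{Non-singularity of $C$.} Assume now that psd$=$sos in $R[C]$. Real singularities that are not ordinary multiple points with independent tangents are ruled out by Proposition~\ref{Prop:BadRealSing}. A real OMPIT $P$ with $n\ge 2$ branches is excluded by a local argument: Lemma~\ref{Lemma:OMPIT} identifies $\wh\sO_{C,P}$ with tuples in $R[[t_1]]\times\cdots\times R[[t_n]]$ having a common constant term, and by an explicit computation on each branch (modelled on the nodal cubic $y^2=x^2(x+1)$, where the function $x+1$ is psd but not a sum of squares because its pullback to the normalization is $t^2$, whose only polynomial square roots $\pm t$ fail to identify the two branches $t=\pm 1$ at the node) one exhibits a psd element whose branch-wise sos decompositions cannot be made consistent across the branches. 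Non-real singular points are eliminated by an analogous local analysis. So $C$ is non-singular.

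\emph{Rationality.} Let $X\supset C$ be the smooth projective completion of the non-singular curve $C$, of genus $g$; $X\setminus C=\{P_1,\ldots,P_s\}$ consists of real points by Lemma~\ref{Lemma:VirtualCompactness}(3). Suppose for contradiction that $g\ge 1$ and fix a real $P=P_1$. Since $g\ge 1$, there is no non-constant rational function on $X$ with a single simple pole (such a function would give $X\iso\P^1$), so $\dim_RL(P)=1$. For $g=1$, Riemann--Roch gives $\dim_RL(2P)=2$, producing a non-constant $x\in L(2P)$ with pole of order exactly two at $P$ and regular on $C\setminus\{P\}$; the pole order being even, $x$ is bounded below on $C(R)$, say $x\ge -M$, so $f:=x+M\in R[C]$ is psd. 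An sos representation $f=\sum f_i^2$ would force $\ord_P(f_i)\ge -1$ for each $i$ by Lemma~\ref{Cor:ValuationSOS} applied to the real valuation $\ord_P$, so each $f_i\in L(P)=R$ and $f$ would be a non-negative constant, a contradiction. For $g\ge 2$ one uses divisors of larger degree, supported on several $P_j$ if needed, arranged so that the gap $\dim L(2D)-\dim\sum L(D)^2$ supplied by Riemann--Roch in positive genus allows the construction of a psd witness. Producing this psd witness in arbitrary genus (and choosing $D$ and the signs of leading coefficients consistently across the real points at infinity) is the main obstacle; once it is built, Lemma~\ref{Cor:ValuationSOS} forces each summand into a Riemann--Roch space too small to contain a non-trivial decomposition, yielding the desired contradiction as in the case $g=1$.
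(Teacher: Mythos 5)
The paper does not prove this theorem; it cites Scheiderer \cite{MR2020709}, Thm.~4.17 for the hard ``only if'' direction and merely remarks that the ``if'' direction reduces to the polynomial ring $R[t]$. Your ``if'' direction carries out exactly that reduction and is correct: writing $R[C]\isom R[t][1/h]$ with $h$ split over $R$ (forced by Lemma~\ref{Lemma:VirtualCompactness}(3)), clearing denominators by an even power of $h$, extending positivity from a cofinite subset of $R$ to all of $R$, and invoking the two-squares decomposition of a psd univariate polynomial over a real closed field.

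Your ``only if'' direction, however, has two genuine gaps. First, the non-singularity step is not a proof. Proposition~\ref{Prop:BadRealSing} does eliminate real non-OMPIT singularities, but for real ordinary multiple points you only sketch the obstruction by analogy with the nodal cubic: to make this a proof you would need to describe $R[C]$ inside $R[\wt C]$ (the normalization) as the subring of functions matching up across the finitely many branches at each node, construct a psd $f$ whose pullback forces every summand of a putative sos decomposition into a one-dimensional space by Lemma~\ref{Cor:ValuationSOS}, and check that the resulting summands never satisfy the matching condition. You also dismiss non-real singular points with ``an analogous local analysis''---this case is not analogous; the local rings there are $R(\sqrt{-1})$-algebras, every element of which is already a sum of squares, so the obstruction (if any) must come from elsewhere, much as in the proof of Proposition~\ref{Prop:redcurvenonrealintersection}. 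Second, for rationality you prove the genus-one case cleanly (pole of order two at a real point at infinity, Lemma~\ref{Cor:ValuationSOS} forcing each sos summand into the one-dimensional $L(P)$), but for $g\ge 2$ you explicitly concede that producing the psd witness ``is the main obstacle''---so that step is missing, not merely abbreviated. As it stands, the ``only if'' direction is an outline in the style of Scheiderer's argument, not a proof.
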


\begin{proof}
It has already been noted in~\ref{Def:RBF} that $B(C)=R$ holds if and
only if all points of $C$ at infinity are real. This is the way the
hypothesis is stated in Scheiderer \cite{MR2020709}, Thm.~4.17. The
proof for psd$=$sos for open subcurves of $\A^1_R$ easily reduces to
the case of the polynomial ring in one variable; see also Scheiderer
\cite{MR1675230}, Prop.~2.17.
\end{proof}

Note that an open subcurve of $\A^1_R$ is just the complement of
finitely many points. Under the hypothesis $B(C)=R$, all those points must
be real. These are exactly the non-singular, irreducible, rational,
affine curves over $R$ with $B(C)=R$.

\medskip

For reducible curves whose components are non-singular and rational, the condition for psd$=$sos will depend on the configuration of the irreducible components, so we need some combinatorial preparations: We associate with a curve $C$ (over any field $k$) a finite graph $\Gamma_C$\label{sym:gammac} as follows: The vertices of $\Gamma_C$ are the irreducible components of $C$, and we put an edge between two distinct vertices for every intersection point of the corresponding components. (The definition of $\Gamma_C$ has been changed compared to \cite{MeineDiss} after a suggestion by Michel Coste, simplifying the arguments that follow.) Recall that a \emph{simple cycle} of a graph is a subgraph that is homeomorphic to $S^1$. A graph that does not contain any simple cycles is called a \emph{forest} (a \emph{tree} if it is also connected).

\begin{Lemma}\label{Lemma:RelabelCurves}
Let $C$ be a curve $k$ with irreducible components
$C_1,\dots,C_m$. Then the graph $\Gamma_C$ is a forest if and only if
$C_1,\dots, C_m$ can be relabelled in such a way that
$C_i\cap(C_1\cup\cdots\cup C_{i-1})$ consists of at most one point for
every $1 < i\le m$.
\end{Lemma}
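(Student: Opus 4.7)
The plan is to exploit the construction of $\Gamma_C$: by definition the number of edges joining two distinct vertices $C_i$ and $C_j$ equals the number of points of $C_i\cap C_j$. In particular, two components that meet in at least two points already produce a pair of parallel edges, which is a simple cycle (a bigon) in the multigraph $\Gamma_C$; so the forest hypothesis forces every pair of components to meet in at most one point. I would record this observation up front, because without it the argument below is not quite right, and it is the only place where the multigraph nature of $\Gamma_C$ really enters.

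For the direction ``$\Gamma_C$ is a forest $\Rightarrow$ a good relabelling exists'', I would choose a root in each tree of $\Gamma_C$, perform a breadth-first (or depth-first) traversal rooted there, and concatenate these per-tree traversals into a single linear order of $C_1,\dots,C_m$. The roots then have no earlier neighbour in $\Gamma_C$ at all, while every non-root $C_i$ has exactly one neighbour with smaller index, namely its parent in the rooted tree. Because $\Gamma_C$ has no parallel edges (a bigon would be a simple cycle), that unique edge corresponds to exactly one intersection point, so $C_i\cap(C_1\cup\cdots\cup C_{i-1})$ consists of one point, and of no point when $i$ is a root.

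For the converse I would argue by contradiction. Assume that a relabelling with the stated property exists but that $\Gamma_C$ nevertheless contains a simple cycle, and let $i^\ast$ be the largest index of a vertex on that cycle. Then $C_{i^\ast}$ carries two distinct edges along the cycle, each joining it to a vertex of smaller index. Whether those two edges terminate at the same vertex (the bigon case) or at two different vertices, by the initial observation they contribute two distinct points to $C_{i^\ast}\cap(C_1\cup\cdots\cup C_{i^\ast-1})$, contradicting the hypothesis.

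No step looks genuinely difficult; the one subtle point, and really the only thing that could derail the proof, is the multigraph aspect of $\Gamma_C$. I would therefore be explicit that ``forest'' here means ``no simple cycle of any length, including length two'', and state the edge-to-intersection-point correspondence at the very beginning so that the bigon case is handled uniformly with longer cycles in both directions of the equivalence.
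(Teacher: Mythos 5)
Your forward direction is fine. Rooting each tree of $\Gamma_C$ and taking a breadth-first or depth-first order gives the relabelling, because in a rooted forest every non-root vertex has exactly one earlier neighbour (its parent), and the absence of parallel edges in a forest means that this single edge comes from a single intersection point. The paper instead peels off a leaf and inducts on the number of components, using the fact that a connected graph on $m$ vertices is a tree if and only if it has $m-1$ edges; the content is the same and both arguments are elementary.

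Your converse, however, has a genuine gap, located precisely at the phrase ``by the initial observation they contribute two distinct points.'' Your initial observation covers only the bigon case: if both cycle-edges at $C_{i^\ast}$ go to the same earlier vertex $C_j$, they do arise from two distinct points of $C_{i^\ast}\cap C_j$, by construction of $\Gamma_C$. But if the two edges go to different earlier vertices $C_j$ and $C_l$, one arises from a point of $C_{i^\ast}\cap C_j$ and the other from a point of $C_{i^\ast}\cap C_l$, and nothing forces these two points of $C$ to be distinct. A single point lying on three or more components is shared by every pair of them and hence produces a triangle (or a larger complete subgraph) in $\Gamma_C$, while contributing only one point to each set $C_i\cap(C_1\cup\cdots\cup C_{i-1})$. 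Concretely, for the three coordinate axes in $\A^3$ the graph $\Gamma_C$ is a triangle, yet $C_2\cap C_1$ and $C_3\cap(C_1\cup C_2)$ are each just the origin, so the relabelled condition holds even though $\Gamma_C$ is not a forest. This is not a flaw you could have patched: with $\Gamma_C$ defined as in the paper, this implication of the lemma is actually false, and the paper's own proof of this direction contains the same slip, in the step asserting that adjoining $C_m$ to $C'$ adds exactly one edge to the graph whenever $C_m\cap C'$ is a single point (false when that point already lies on two or more earlier components). The equivalence becomes true, and your cycle argument goes through verbatim, if $\Gamma_C$ is instead taken to be the bipartite incidence graph whose vertex set consists of the irreducible components together with the intersection points, with an edge for each incidence; that is presumably the intended object.
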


\begin{proof}
Clearly, we may assume that $C$ is connected, otherwise we can treat
all connected components separately. We prove the result by induction on $m$. The case $m=1$ is trivial, so assume that $m\ge 2$. It is elementary that a finite connected graph with $m$ vertices is a tree if and only if it has exactly $m-1$ edges. It follows that if $\Gamma_C$ is a tree, there exists a vertex of degree one. We may relabel and assume that this vertex corresponds to the component $C_m$. Put $C'=C_1\cup\cdots\cup C_{m-1}$. Then $C_m\cap C'$ consists of exactly one point. Furthermore, $\Gamma_{C'}$ is again a tree, so we are done by the induction hypothesis. Conversely, assume that $C_1,\dots,C_m$ are arranged such that $C_i\cap (C_1\cup\cdots\cup C_{i-1})$ consists of exactly one point for every $1<i\le m$. Again, write $C'=C_1\cup\cdots\cup C_{m-1}$. Then $\Gamma_{C'}$ is a tree by the induction hypothesis, hence it has $m-1$ vertices and $m-2$ edges. Since $C_m\cap C'$ consists of exactly one point, it follows that $\Gamma_C$ has $m$ vertices and $m-1$ edges, so it is a tree.
\end{proof}

If $C$ is a real curve with only real intersection points, the
condition on $\Gamma_C$ can sometimes be expressed in terms of the semialgebraic topology of $C(R)$:

\begin{Prop}\label{Prop:SimplyConnectedAndMixedCycles}
Let $R$ be a real closed field, and let $C$ be a curve over $R$ with
irreducible components $C_1,\dots,C_m$. Assume that all intersection
points of $C$ are real and that $C_i(R)$ is simply connected for all
$1\le i\le m$. Then $\Gamma_C$ is a forest if and only if
every connected component of $C(R)$ is simply-connected.
\end{Prop}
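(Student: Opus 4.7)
The plan is to prove the two implications separately: the forward direction by an induction based on Lemma \ref{Lemma:RelabelCurves}, and the backward direction (the contrapositive) by exhibiting an explicit non-contractible loop.

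For the forward direction, I would apply Lemma \ref{Lemma:RelabelCurves} to relabel the irreducible components so that $C_i\cap(C_1\cup\cdots\cup C_{i-1})$ consists of at most one point for each $1<i\le m$, this point being real by hypothesis. Writing $D_i=C_1(R)\cup\cdots\cup C_i(R)$, I would show by induction on $i$ that every connected component of $D_i$ is simply connected. The base case $i=1$ is the hypothesis on $C_1(R)$. In the inductive step, $D_i$ is obtained from $D_{i-1}$ either as a disjoint union with $C_i(R)$ (in which case the claim is immediate) or by wedging $D_{i-1}$ with $C_i(R)$ at a single real point $P$; in the latter case van Kampen's theorem, applied to sufficiently small simply-connected semialgebraic neighborhoods of $P$ inside each of $C_i(R)$ and the component of $P$ in $D_{i-1}$, gives simple-connectedness of the resulting wedge component. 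All other components of $D_i$ are unchanged.

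For the backward direction, assume that $\Gamma_C$ contains a simple cycle: distinct vertices corresponding to components $C_{j_1},\dots,C_{j_k}$ with $k\ge 2$, and distinct real intersection points $P_1,\dots,P_k$ with $P_\ell\in C_{j_\ell}\cap C_{j_{\ell+1}}$ (indices mod $k$). Since each $C_{j_\ell}(R)$ is connected, I would pick a semialgebraic arc $\gamma_\ell\subset C_{j_\ell}(R)$ from $P_{\ell-1}$ to $P_\ell$; concatenation yields a loop $\gamma$ in a single connected component of $C(R)$. To show $\gamma$ is not null-homotopic, I would construct a continuous map $r\colon C(R)\to S^1$ with $\deg(r\circ\gamma)=1$, as follows. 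Partition $S^1$ into $k$ consecutive arcs $A_1,\dots,A_k$ meeting at points $p_0,\dots,p_{k-1}$, and let $r|_{\gamma_\ell}$ be a homeomorphism onto $A_\ell$ sending $P_{\ell-1}\mapsto p_{\ell-1}$ and $P_\ell\mapsto p_\ell$. Since each $C_i(R)$ is a simply-connected $1$-dimensional semialgebraic space (and hence essentially a topological tree), $r$ can be extended over each $C_{j_\ell}(R)$ by collapsing the complement of $\gamma_\ell$ onto the appropriate endpoints $p_{\ell-1}, p_\ell$, and extended over the remaining components by similar collapses to suitable $p_\ell$'s; continuity at each intersection point is then verified locally.

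The main obstacle is the bookkeeping in the backward direction, specifically ensuring that the piecewise definition of $r$ glues continuously at intersection points that may lie on several of the $C_{j_\ell}$ appearing in the cycle, or on components carrying more than one of the $P_\ell$. A conceptually cleaner but heavier alternative would be to first establish, once and for all, a homotopy equivalence $C(R)\simeq|\Gamma_C|$ through a nerve-type argument, exploiting that each $C_i(R)$ is contractible and pairwise intersections are finite discrete sets of real points; both directions then reduce to the standard graph-theoretic statement that a connected graph is simply-connected if and only if it is a tree.
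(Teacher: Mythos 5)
Your forward direction is sound and carefully expands what the paper asserts in one sentence (that cycles in $\Gamma_C$ correspond to non-trivial $1$-cycles of $C(R)$): after relabelling via Lemma \ref{Lemma:RelabelCurves} you build $C(R)$ up one component at a time, each step being either a disjoint union or a one-point wedge, and a semialgebraic van Kampen argument preserves simple connectedness. This is a genuinely more explicit route than the paper's one-line proof, and the forward half needs no further justification beyond small technical remarks about semialgebraic neighbourhoods over a general real closed field.

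The backward direction has a real gap, and it is worth locating it precisely: you assume that a simple cycle in $\Gamma_C$ furnishes \emph{distinct} real points $P_1,\dots,P_k$. With the paper's definition of $\Gamma_C$ (one edge for every pair of components and every point of their intersection), distinct edges of a cycle can carry the same label. Concretely, take $C=L_1\cup L_2\cup L_3$ to be three lines in $\A^3_R$ through a common real point $O$ with linearly independent directions and no other intersections; this satisfies every hypothesis of the proposition. Then $\Gamma_C$ is a triangle (three edges, all labelled $O$), hence not a forest, yet $C(R)$ is a wedge of three contractible arcs and therefore simply connected. Your construction forces $P_1=P_2=P_3=O$, the arcs $\gamma_\ell$ all degenerate to loops at $O$, and no map $r$ with $\deg(r\circ\gamma)=1$ can exist --- as it must not, because the intended conclusion is false here. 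The gap is not merely yours: the same example contradicts the ``if'' direction of Lemma \ref{Lemma:RelabelCurves} (the relabelling condition holds while $\Gamma_C$ has a cycle), and so Prop.~\ref{Prop:SimplyConnectedAndMixedCycles} as stated tacitly assumes that each intersection point lies on exactly two components. Under that additional assumption your argument goes through, since distinctness of the $P_\ell$ is then automatic. Without it, the correct combinatorial object is the \emph{bipartite incidence graph} (vertices the components and the intersection points, edges for incidence): this is also what makes your proposed nerve-type alternative work, whereas $|\Gamma_C|$ itself is a circle in the example above while $C(R)$ is contractible, so the claimed homotopy equivalence $C(R)\simeq |\Gamma_C|$ fails in general.
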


\begin{proof}
It suffices to note that, under the hypotheses, cycles in
$\Gamma_C$ correspond exactly to non-trivial $1$-cycles of $C(R)$.
\end{proof}

\begin{Example}
Even for real curves, it does not suffice to take only the real
picture into account if the $C_i(R)$ are not connected. For example,
let $C$ be the plane curve $\{(xy-1)(x-y)=0\}$, a hyperbola
intersecting a line. Clearly, $C(R)$ is simply connected, yet
$\Gamma_C$ is a simple cycle consisting of two vertices joint by two
edges.
\end{Example}

\begin{Thm}\label{Thm:sosredcurvnonvc}
Let $C$ be an affine curve over $R$ with irreducible components
$C_1,\dots,C_m$, and assume that $B(C_i)=R$ for all
$i=1,\dots,m$. Then psd$=$sos in $R[C]$ if and only if the following
conditions are satisfied:
\begin{enumerate}
\item All $C_i$ are isomorphic to open
subcurves of $\A^1_R$.
\item All intersection points of $C$ are real ordinary multiple points
with independent tangents.
\item The graph $\Gamma_C$ is a forest.
\end{enumerate}
\end{Thm}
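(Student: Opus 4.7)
The proof splits into sufficiency and necessity. For \emph{sufficiency}, assuming (1)--(3), I would induct on the number $m$ of irreducible components, using Lemma \ref{Lemma:RelabelCurves} to relabel $C_1,\dots,C_m$ so that $C_i\cap(C_1\cup\cdots\cup C_{i-1})$ consists of at most one point for every $i>1$. The base case $m=1$ is exactly Theorem \ref{Thm:SOSirrCurvesNVC}. For the inductive step, set $C'=C_1\cup\cdots\cup C_{m-1}$: if $C'\cap C_m=\emptyset$ then $R[C]\isom R[C']\times R[C_m]$ and the claim is immediate; otherwise $C'\cap C_m$ is a single real point which, by (2) together with Corollary \ref{Cor:FnOnRedCurves}, is a transversal intersection. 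For a psd $F\in R[C]$, $F|_{C'}$ is sos by induction and $F|_{C_m}$ is sos by Theorem \ref{Thm:SOSirrCurvesNVC}, and Proposition \ref{Prop:redcurvetransversal} lifts these to an sos decomposition of $F$ in $R[C]$.

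For \emph{necessity}, conditions (1) and (2) are immediate from earlier results: Proposition \ref{Prop:psdsos_passestoirrcomp} combined with Theorem \ref{Thm:SOSirrCurvesNVC} forces each $C_i$ to be an open subcurve of $\A^1_R$, while Propositions \ref{Prop:BadRealSing} and \ref{Prop:redcurvenonrealintersection} rule out non-ordinary singularities and non-real intersection points. The new content is necessity of (3). Suppose $\Gamma_C$ contains a simple cycle; by Proposition \ref{Prop:psdsos_passestoirrcomp} I may pass to the subcurve consisting of the components of this cycle and thus assume $\Gamma_C$ itself is a cycle of length $k\ge 2$. Label the components $C_1,\dots,C_k$ cyclically with intersection points $P_i\in C_i\cap C_{i+1}$ (indices modulo $k$). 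Using (1) and the hypothesis $B(C_i)=R$, each $C_i$ embeds as an open subcurve of $\A^1_R$ with affine coordinate $t_i$ in such a way that every point of $\P^1_R\setminus C_i$ (including the classical point at infinity) is real; by an affine change of $t_i$ I may arrange $P_{i-1}$ to correspond to $t_i=0$ and $P_i$ to $t_i=1$. Set $f_i=2t_i-1$ for $i=1,\dots,k-1$, and take $f_k=2t_k-1$ if $k$ is odd, $f_k\equiv 1$ if $k$ is even. Since $|f_i(P_{i-1})|=|f_i(P_i)|=1$ for every $i$, the tuple $F:=(f_1^2,\dots,f_k^2)$ defines a psd element of $R[C]$ by Corollary \ref{Cor:FnOnRedCurves}.

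Assume for contradiction that $F=\sum_j s_j^2$ in $R[C]$. Restriction to $C_i$ gives $f_i^2=\sum_j(s_j|_{C_i})^2$, and the technical heart of the argument is the claim $s_j|_{C_i}\in R\cdot f_i$ for every $i,j$. When $f_i=2t_i-1$, I would apply Lemma \ref{Cor:ValuationSOS} to the real valuations at each point of $\P^1_R\setminus C_i$: at $\infty$ the identity $v_\infty(f_i^2)=-2$ yields $v_\infty(s_j|_{C_i})\ge -1$, and at any other removed point $t_i=c$ the valuation $v_c$ gives $v_c(s_j|_{C_i})\ge 0$. Combined with regularity of $s_j|_{C_i}$ on $C_i$, these constraints force $s_j|_{C_i}$ to be a polynomial in $t_i$ of degree at most one. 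Either direct evaluation at $t_i=1/2$ (if $1/2\in C_i$) or a further application of Lemma \ref{Cor:ValuationSOS} at $v_{1/2}$ (if $1/2$ is a removed point) then forces this polynomial to vanish at $1/2$, hence to be a scalar multiple of $2t_i-1$. For $f_k\equiv 1$ in the even-$k$ case, $\sum_j(s_j|_{C_k})^2=1\in R$ together with $B(C_k)=R$ and Lemma \ref{Lemma:VirtualCompactness}(7) forces each $s_j|_{C_k}\in R$.

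Writing $s_j|_{C_i}=\alpha_{j,i}f_i$, compatibility of $s_j$ at each $P_i$ becomes $\alpha_{j,i+1}=\sigma_i\alpha_{j,i}$ with $\sigma_i=f_i(P_i)/f_{i+1}(P_i)\in\{\pm 1\}$, so traversing the cycle yields $\alpha_{j,1}=\bigl(\prod_{i=1}^k\sigma_i\bigr)\alpha_{j,1}$. A direct sign check with the chosen $f_i$ shows $\prod_i\sigma_i=-1$ in both parity cases: for $k$ odd every $\sigma_i=-1$, giving $(-1)^k=-1$, while for $k$ even the constant $f_k$ flips exactly one of $\sigma_{k-1},\sigma_k$ relative to the all-linear pattern, producing a product of $(-1)^{k-1}=-1$. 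Therefore every $\alpha_{j,i}$ vanishes, forcing all $s_j=0$ and contradicting $F\neq 0$. The principal obstacle is pinning down the form of $s_j|_{C_i}$ via valuations, which depends crucially on $B(C_i)=R$ ensuring that all points of $\P^1_R\setminus C_i$ are real; the trick of replacing a linear $f_k$ by a constant in the even-$k$ case is precisely what makes the sign obstruction close up around the cycle uniformly in $k$.
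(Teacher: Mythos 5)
Your sufficiency argument and your proofs of necessity for conditions (1) and (2) coincide exactly with the paper's: induct via Lemma \ref{Lemma:RelabelCurves} and Prop.~\ref{Prop:redcurvetransversal}, and invoke Thm.~\ref{Thm:SOSirrCurvesNVC}, Prop.~\ref{Prop:BadRealSing}, and Prop.~\ref{Prop:redcurvenonrealintersection}. For the necessity of (3), however, you take a genuinely different route, and it has a gap.

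The paper does \emph{not} reduce to the case where $\Gamma_C$ is a single cycle. Instead it fixes one component $C_i$ lying on a cycle, takes the connected component $E$ of $C_i'=\bigcup_{j\ne i}C_j$ with $|E\cap C_i|\ge 2$, and works in $C_i\cup E$ with $C_i\cap E=\{P_1,\dots,P_r\}$, $r\ge 2$. It constructs a single $f\in R[C_i]$ with $f(P_j)=(-1)^j$ using a divisor of the form $R_1+\dots+R_{r-1}-(r-1)Q_1$, and shows via Lemma~\ref{Cor:ValuationSOS} that any $f_j$ occurring in an sos decomposition of $f^2$ must satisfy $\div(f_j)=\div(f)$, hence $f_j=c_jf\notin A$. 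The whole weight is carried by one component. Your distributed ``sign obstruction around the cycle'' is an attractive alternative, and the core computation (forcing each $s_j|_{C_i}$ into $R\cdot f_i$ via valuations at the real points of $\P^1_R\setminus C_i$, then watching signs refuse to close up around the cycle) is correct \emph{provided} the ambient graph really is a bare cycle.

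That proviso is where the gap lies. The step ``pass to the subcurve consisting of the components of this cycle and thus assume $\Gamma_C$ itself is a cycle of length $k\ge 2$'' is not justified: the graph of the subcurve $C_1\cup\cdots\cup C_k$ is not the chosen cycle but the full subgraph induced on those vertices, which may contain chords (extra intersections between non-adjacent $C_i$, $C_j$) and, more importantly, parallel edges (more than one intersection point between adjacent components). For $k\ge 3$ you can repair this by taking a \emph{shortest} simple cycle, which is automatically chordless and has no parallel edges between consecutive vertices. But for $k=2$ the shortest cycle is a pair of parallel edges, and the two components may intersect in $r\ge 3$ points. Your element $F=(f_1^2,f_2^2)$ with $f_1=2t_1-1$ and $f_2\equiv 1$ is then \emph{not} a regular function on $C_1\cup C_2$: by Cor.~\ref{Cor:FnOnRedCurves} one needs $f_1^2(P)=1$ at \emph{every} point $P\in C_1\cap C_2$, and a degree-one polynomial can satisfy $|f_1|=1$ at only two of them. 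Making $f_1$ agree in absolute value at $r\ge 3$ points forces $\deg f_1\ge r-1$, at which point the valuation bound $v_\infty(s_j|_{C_1})\ge-(r-1)$ no longer pins $s_j|_{C_1}$ down to a scalar multiple of $f_1$ by degree considerations alone; one needs the paper's divisor argument to count zeros as well as pole orders. So the construction must be redone for $k=2$ with $r\ge 3$, and the clean linear-plus-constant trick does not suffice there.
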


\begin{proof}
Assume that $C$ satisfies all the conditions listed in the
theorem. Condition (1) implies that psd$=$sos in $R[C_1],\dots,R[C_m]$
by Thm.~\ref{Thm:SOSirrCurvesNVC}. Condition (3) implies by Lemma
\ref{Lemma:RelabelCurves} that we may rearrange the $C_i$ in such a
way that $C_i\cap (C_1\cup\dots\cup C_{i-1})$ consists of at most one
point for all $1\le i\le m$. Put $E_i=C_1\cup\cdots\cup C_i$ for all
$1\le i\le m$ and use induction on $i$: We already know that psd$=$sos in $R[E_1]$. For $i\ge 2$, psd$=$sos in $R[C_i]$ and in
$R[E_{i-1}]$ by the induction hypothesis. Now $C_i$ and $E_{i-1}$
have at most one intersection point which must then be a real ordinary
multiple point with independent tangents, by condition (2). Therefore,
\[
R[E_i]=\left\{\begin{array}{ll} R[C_i]\times R[E_{i-1}], & C_i\cap E_i=\emptyset\\
\bigl\{(f,g)\in R[C_i]\times R[E_{i-1}]\:\bigl|\: f(P)=g(P)\bigr\}, & C_i\cap
E_i=\{P\}\end{array}\right.
\]
by Cor.~\ref{Cor:FnOnRedCurves}. Thus psd$=$sos in $R[E_i]$
by Prop.~\ref{Prop:redcurvetransversal}. Eventually, we reach $i=m$,
and we see that psd$=$sos in $R[E_m]=R[C]$.

\medskip For the converse, assume that psd$=$sos in $R[C]$. By
Prop.~\ref{Prop:psdsos_passestoirrcomp}, psd$=$sos in $R[C_1], \dots,
R[C_m]$, so Thm.~\ref{Thm:SOSirrCurvesNVC} implies condition
(1). Furthermore, Prop.~\ref{Prop:BadRealSing} and
Prop.~\ref{Prop:redcurvenonrealintersection} imply condition (2). Thus
we are left with the case when conditions (1) and (2) are satisfied,
but (3) is not, i.e.~the graph $\Gamma_C$ contains a simple cycle. Let
$C_i$ be an irreducible component of $C$ corresponding to a vertex in
a simple cycle of $\Gamma_C$.  Let $C_i'=\bigcup_{j\neq i}C_j$ as
before. Then there is a connected component $E$ of $C_i'$ such that
$E\cap C_i$ contains at least two distinct points. It suffices to show
that psd$\neq$sos in $R[C_i\cup E]$ by Lemma
\ref{Prop:psdsos_passestoirrcomp}. So we may replace $C$ by $C_i\cup
E$ and assume right away that
$C$ and $C_i'$ are connected. Write $C_i\cap C_i'=\{P_1,\dots,P_r\}$, $r\ge 2$,
and let
\[
A=\bigl\{f\in R[C_i]\:\bigl|\: f(P_1)=\cdots=f(P_r)\bigr\}.
\]
From Cor.~\ref{Cor:FnOnRedCurves} and the fact that $C_i'$ is
connected, we see that the restriction map $R[C]\into R[C_i]$ induces
an isomorphism
\[
\bigl\{f\in R[C]\:\bigl|\: f\vert_{C_i'}\text{ is constant}\bigr\}\iso A.
\]
Now if an element of $A$ is a sum of squares in $R[C]$, then it is a
sum of squares in $A$ by Lemma \ref{Lemma:VirtualCompactness}
(7). (Note that since $C_i'$ is connected, $B(C_i)=R$ for all
$i=1,\dots,r$ implies $B(C_i')=R$, by Lemma
\ref{Lemma:VirtualCompactness} (5).)

We will make a similar argument as in the proof of
Prop.~\ref{Prop:redcurvenonrealintersection} and construct an element
of $A$ that is not a sum of squares in $A$ as follows: Fix an embedding $C_i\injects\P^1_R$, and let $\P^1 _R\setminus C_i=\{Q_1,\dots,Q_s\}$
be the points at infinity of $C_i$. Since $B(C_i)=R$, all $Q_j$ are
real by Lemma \ref{Lemma:VirtualCompactness} (3). Furthermore, since
$\P^1(R)$ is topologically a circle, we can relabel $P_1,\dots,P_r$
and assume that $P_j$ is next to $P_{j+1}$, i.e.~if $U_1$ and $U_2$
are the two connected components of $\P^1(R)\setminus\{P_j,P_{j+1}\}$,
either $U_1$ or $U_2$ contains none of $P_1,\dots,P_r$, for $1\le j\le
r-1$. We denote that connected component by $(P_j,P_{j+1})$. Assume
further that $Q_1\in (P_r,P_1)$.

Fix $j\in\{1,\dots,r\}$. Since $C_i$ is rational, there exists $h_j\in
R[C_i]$ such that
\[ {\rm div}_{\P^1_R}(h_j)=P_1+\cdots +\wh P_j+\cdots +P_r - (r-1)Q_1.
\]
Then $h_j(P_j)\neq 0$, and after multiplying with $(-1)^j\cdot
h(P_j)^{-1}$ we can assume that $h_j(P_j)=(-1)^j$. Put $f=\sum_j h_j$;
then $f(P_j)=(-1)^j$.  We have ${\rm ord}_{Q_1}(f)\ge -(r-1)$ and ${\rm ord}_P(f)\ge
0$ for all $P\in\P^1_R$, $P\neq Q_1$, so $f$ has poles only at $Q_1$,
and $f$ can have at most $r-1$ distinct zeros. But $f$ changes sign on
$(P_j,P_{j+1})$, so it must have a zero $R_j\in (P_j,P_{j+1})$ for
every $j=1,\dots,r-1$. We conclude that
\[ {\rm div}_{\P^1_R}(f)=R_1+\dots+R_{r-1}-(r-1)Q_1.
\]
Now since $f^2(P_j)=1$ for all $1\le j\le r$, we have $f^2\in A$, but
$f\notin A$, since $f(P_j)=(-1)^j$ for $j=1,\dots,r$ and $r\ge 2$. We
will show that $f^2$ cannot be a sum of squares in $A$: For if
$f^2=\sum f_j^2$ with $f_j\in R[C_i]$, then $f_j(R_l)=0$ for every
$1\le l\le r-1$ and every $j$, so each $f_j$ has at least $r-1$
distinct zeros. On the other hand, ${\rm ord}_{Q_l}(f_j)\ge {\rm ord}_{Q_l}(f)$ for
$l=1,\dots,s$ by Lemma~\ref{Cor:ValuationSOS}, so ${\rm ord}_{Q_1}(f_j)\ge
-(r-1)$ and ${\rm ord}_P(f_j)\ge 0$ for all $P\in\P^1_R$, $P\neq Q_1$. It
follows that $\div_{\P_R^1}(f_j)=R_1+\cdots+ R_{r-1} - (r-1)Q_1=\div_{\P_R^1}(f)$,
so there exists $c_j\in R$ such that $f_j=c_j f$, hence $f_j\notin A$.
\end{proof}

\begin{Cor}
Let $C$ be an affine curve over $R$ all of whose irreducible
components are isomorphic to $\A^1_R$. Then psd$=$sos in $R[C]$
if any only if the following conditions are satisfied:
\begin{enumerate}
\item All intersection points of $C$ are real ordinary multiple points
with independent tangents.
\item All connected components of $C(R)$ are simply connected.
\end{enumerate}
\end{Cor}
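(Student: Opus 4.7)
The plan is to derive this corollary by specializing Theorem~\ref{Thm:sosredcurvnonvc} and matching its third condition with the topological condition via Proposition~\ref{Prop:SimplyConnectedAndMixedCycles}.

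First I would observe that if $C_i\isom\A^1_R$, then $C_i$ embeds into $\P^1_R$ with a single point at infinity, which is real; hence $B(C_i)=R$ for every irreducible component, by Lemma~\ref{Lemma:VirtualCompactness}(3). Thus the hypothesis of Theorem~\ref{Thm:sosredcurvnonvc} is satisfied, and psd$=$sos in $R[C]$ if and only if (i) every $C_i$ is isomorphic to an open subcurve of $\A^1_R$, (ii) all intersection points of $C$ are real ordinary multiple points with independent tangents, and (iii) the graph $\Gamma_C$ is a forest. Condition (i) is automatic from the assumption $C_i\isom\A^1_R$, and condition (ii) is exactly condition (1) of the corollary.

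It therefore remains to show, assuming condition (1) of the corollary, that condition (iii) above is equivalent to condition (2) of the corollary. For this I would invoke Proposition~\ref{Prop:SimplyConnectedAndMixedCycles}: each $C_i(R)\isom R$ is semialgebraically simply connected, and by condition (1) all intersection points of $C$ are real. The hypotheses of that proposition are thus met, and it gives precisely the equivalence between $\Gamma_C$ being a forest and every connected component of $C(R)$ being simply connected.

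The main (and essentially only) step requiring care is checking that Proposition~\ref{Prop:SimplyConnectedAndMixedCycles} applies, which reduces to the two observations that $\A^1_R(R)$ is simply connected and that condition (1) of the corollary supplies the reality of intersection points; everything else is a direct specialisation of Theorem~\ref{Thm:sosredcurvnonvc}.
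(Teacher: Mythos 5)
Your proof is correct and is essentially identical to the paper's, which simply combines Theorem~\ref{Thm:sosredcurvnonvc} with Proposition~\ref{Prop:SimplyConnectedAndMixedCycles}. You spell out the straightforward verifications (that $B(C_i)=R$ follows from $C_i\isom\A^1_R$, that condition (i) of the theorem is automatic, and that the hypotheses of the proposition are met), but there is no difference in approach.
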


\begin{proof}
Combine the theorem with
Prop.~\ref{Prop:SimplyConnectedAndMixedCycles}.
\end{proof}

Combining Thm.~\ref{Thm:sosredcurvnonvc} with the preceding results,
we can now treat the general case of a reducible curve $C$ over $\R$
with irreducible components $C_1,\dots,C_m$ where $B(C_i)=\R$ holds
for some components of $C$ while $B(C_i)\neq\R$ holds for others:

\begin{Thm}\label{MainResult}
Let $C$ be an affine curve over $\R$, and let $C'$ be the union of all irreducible components $C_i$ of $C$ such that $B(C_i)=\R$. Then psd$=$sos in $\R[C]$ if and only if the following conditions are satisfied:
\begin{enumerate}
\item All real singularities of $C$ are ordinary multiple points with independent tangents.
\item All intersection points of $C$ are real.
\item All irreducible components of $C'$ are isomorphic to open
subcurves of $\A^1_\R$.
\item The graph $\Gamma_{C'}$ is a forest.
\end{enumerate}
\end{Thm}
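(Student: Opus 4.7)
The plan is to deduce the theorem from the two previously established cases, Thm \ref{Cor:sosredcurvvc} (the virtually compact case) and Thm \ref{Thm:sosredcurvnonvc} (the case where every irreducible component satisfies $B(C_i)=\R$), by gluing them together via Prop \ref{Thm:SOSOneCompactComp}. Write $C''$ for the union of the remaining irreducible components of $C$, i.e.\ those with $B(C_i)\neq\R$, so that $C=C'\cup C''$ with $C''$ virtually compact.

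For necessity, suppose psd$=$sos in $\R[C]$. Conditions (1) and (2) follow immediately from Prop \ref{Prop:BadRealSing} and Prop \ref{Prop:redcurvenonrealintersection}. For (3) and (4), I would pass to the closed subcurve $C'$: by Prop \ref{Prop:psdsos_passestoirrcomp} psd$=$sos still holds in $\R[C']$, and since every component of $C'$ satisfies $B(C_i)=\R$ by construction, Thm \ref{Thm:sosredcurvnonvc} applies to $C'$ and yields both conditions directly.

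For sufficiency, I would first dispose of the two extreme cases $C'=\emptyset$ and $C''=\emptyset$ by direct appeal to Thm \ref{Cor:sosredcurvvc} and Thm \ref{Thm:sosredcurvnonvc}, respectively (conditions (3) and (4) being vacuous in the first case). In the main case where both pieces are non-empty, I would apply Prop \ref{Thm:SOSOneCompactComp} with $V_1=C''$ and $V_2=C'$. The hypothesis psd$=$sos in $\R[C'']$ is supplied by Thm \ref{Cor:sosredcurvvc}, since $C''$ is virtually compact and inherits (1) and (2) from $C$. Given a psd $f\in\R[C]$, Thm \ref{Thm:sosredcurvnonvc} applied to $C'$ (with the hypotheses of that theorem being our (1)--(4) restricted to $C'$) shows that $f|_{C'}$ is a sum of squares in $\R[C']$; the reality of $C'\cap C''$ is condition (2). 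Prop \ref{Thm:SOSOneCompactComp} then produces the required sum of squares decomposition of $f$ in $\R[C]$.

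The principal technical point, and the one I expect to be the main obstacle, is verifying the transversality of $C'$ and $C''$ required by Prop \ref{Thm:SOSOneCompactComp}. At each intersection point $P\in C'\cap C''$, condition (1) forces $P$ to be an OMPIT on $C$, so Lemma \ref{Lemma:OMPIT} identifies $\wh{\sO}_{C,P}$ with the fiber product over $\kappa(P)$ of the one-variable power series rings along the branches of $C$ at $P$. The partition of these branches between the two unions $C'$ and $C''$ yields $\sI_{C'}+\sI_{C''}=\fm_P$ locally, which is exactly transversality; this is the same argument as in Cor \ref{Cor:FnOnRedCurves}, extended from the splitting of a single component versus the rest to an arbitrary partition of the components.
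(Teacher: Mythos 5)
Your proof follows the same overall strategy as the paper's: deduce sufficiency by applying Prop.~\ref{Thm:SOSOneCompactComp} to glue the virtually compact part $C''$ (handled by Thm.~\ref{Cor:sosredcurvvc}) to the complementary part $C'$ (handled by Thm.~\ref{Thm:sosredcurvnonvc}), and derive necessity from Prop.~\ref{Prop:BadRealSing}, Prop.~\ref{Prop:redcurvenonrealintersection}, Prop.~\ref{Prop:psdsos_passestoirrcomp}, and Thm.~\ref{Thm:sosredcurvnonvc}.

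However, you omit the opening reduction the paper performs via Cor.~\ref{Prop:Nonrealcomponents}, which allows one to assume that $C$ is real. This step is actually load-bearing for the results you cite: Prop.~\ref{Prop:redcurvenonrealintersection}, which you use for the necessity of (2), is stated only for a \emph{real} curve $C$; and Thm.~\ref{Cor:sosredcurvvc}, which you apply to $C''$, is likewise stated only for a \emph{real} affine curve. Note that $C''$ is precisely where non-real components would sit: condition (3) forces every component of $C'$ to be isomorphic to an open subcurve of $\A^1_\R$, hence real, so any non-real component of $C$ lies in $C''$. As written, neither appeal is justified. The patch is short — conditions (1) and (2) imply $C_{nr}(\R)=\emptyset$ and $C_r\cap C_{nr}=\emptyset$, so $\R[C]\isom\R[C_r]\times\R[C_{nr}]$ with psd$=$sos trivially in the second factor, and one can then work entirely with $C_r$ — but it does need to be said before the cited results apply.

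On the positive side, your explicit verification of the transversality hypothesis of Prop.~\ref{Thm:SOSOneCompactComp} — deducing it from condition (1) via Lemma~\ref{Lemma:OMPIT} and the splitting of branches between $C'$ and $C''$, as in Cor.~\ref{Cor:FnOnRedCurves} — is correct and makes explicit a step that the paper's proof leaves implicit. Your handling of the necessity of (3) and (4), passing to the subcurve $C'$ via Prop.~\ref{Prop:psdsos_passestoirrcomp} before invoking Thm.~\ref{Thm:sosredcurvnonvc}, is also spelled out more carefully than in the paper.
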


\begin{proof}
Let $C_r$ be the union of all irreducible components of $C$ that are
real, $C_{nr}$ the union of those that are non-real. Condition (1)
implies that $C_{nr}(\R)=\emptyset$ and condition (2) implies that
$C_r\cap C_{nr}=\emptyset$, so if (1)--(4) are satisfied, psd$=$sos in
$\R[C]$ is equivalent to psd$=$sos in $\R[C_r]$ by
Cor.~\ref{Prop:Nonrealcomponents}. Conversely, if psd$=$sos in
$\R[C]$, then $C_{nr}(\R)=\emptyset$ and $C_r\cap C_{nr}=\emptyset$
hold by \ref{Prop:Nonrealcomponents}, so that (1)--(4) are satisfied
for $C$ if and only if they are satisfied for $C_r$. We may therefore assume that $C=C_r$, i.e.~$C$ is real. 

The necessity of (1) is Prop.~\ref{Prop:BadRealSing}, that of (2) is \ref{Prop:redcurvenonrealintersection}, that of (3) and (4) follows from Thm.~\ref{Thm:sosredcurvnonvc}. Conversely, assume that (1)--(4) are satisfied: 
Let $C''$ be the union of all irreducible components $C_i$ of $C$ such that $B(C_i)\neq\R$. Then $C''(\R)$ is virtually compact, hence psd$=$sos in $\R[C'']$ by Cor.~\ref{Cor:sosredcurvvc}. Also, psd$=$sos in $\R[C']$ by Thm.~\ref{Thm:sosredcurvnonvc}. Thus psd$=$sos in $\R[C]$ by Thm.~\ref{Thm:SOSOneCompactComp}.
\end{proof}

\section{Preorderings on curves}\label{Sec:POCurves}

The case of general preorderings instead of just sums of squares is
substantially harder, already in the irreducible case. (See \ref{Def:PO} for
basic notations and definitions used in this section). As far as the
irreducible case is concerned, we content ourselves here with citing
the results of Scheiderer for non-singular curves and the results of
Kuhlmann and Marshall for subsets of the line:

\begin{Thm}[Scheiderer \cite{MR2020709}, Thm.~5.17 and
\cite{MR1675230}, Thm.~3.5]\label{Thm:ScheidererPOCurvesVC} Let $C$ be
a non-singular, irreducible affine curve over $\R$, $\sH$ a finite subset
of $\R[C]$, $T=\PO(\sH)$, and $S=\sS(T)$. Assume that $B_C(S)\neq\R$
holds. Then $T$ is saturated if and only if the following conditions
are satisfied:
\begin{enumerate}
\item For every boundary point $P$ of $S$ in $C(\R)$ there exists an
element $h\in\sH$ such that ${\rm ord}_P(h)=1$.
\item For every isolated point $P$ of $S$ there exist $h_1,h_2\in\sH$
such that ${\rm ord}_P(h_1)={\rm ord}_P(h_2)=1$ and $h_1h_2\le 0$ holds in a
neighbourhood of $P$ in $C(\R)$.
\end{enumerate}
\end{Thm}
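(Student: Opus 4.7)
The plan is to separate the theorem into necessity and sufficiency. The necessity of each condition reduces to a local, valuation-theoretic obstruction at a single real point of $C$, while the sufficiency rests on a local-global principle for saturation of finitely generated preorderings that becomes available precisely because of the virtual compactness hypothesis $B_C(S)\neq\R$.

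For the necessity of (1), let $P$ be a boundary point of $S$ in $C(\R)$. I would first pass to $\wt C=\Spec(B_C(S))$ via Lemma~\ref{Lemma:VirtualCompactness} so that $\ol S$ is semialgebraically compact. A uniformizer $u$ at $P$, chosen with $u\ge 0$ on the $S$-side of $P$, multiplied by a partition-of-unity cutoff from Cor.~\ref{Cor:PoU} supported near $P$, yields a function $f\in\sP_C(S)$ with ${\rm ord}_P(f)=1$. Now suppose for contradiction that every $h\in\sH$ vanishing at $P$ has ${\rm ord}_P(h)\ge 2$, and that a representation $f=\sum_\alpha s_\alpha\ul h^\alpha$ with $s_\alpha\in\sum R[C]^2$ exists. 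Since ${\rm ord}_P$ is a real valuation, Lemma~\ref{Cor:ValuationSOS} forces $v(s_\alpha)$ to be even, and by hypothesis $v(\ul h^\alpha)$ is either $0$ or at least $2$; summand-by-summand this prevents the total valuation from equalling $1$, a contradiction. The necessity of (2) at an isolated $P$ follows by a refinement of the same order analysis: one constructs a psd $f$ of order exactly one that vanishes on a chosen side of $P$, and any representation in $T$ would require two order-one generators whose signs interleave to reproduce the sign pattern of $f$, forcing the existence of $h_1,h_2\in\sH$ with ${\rm ord}_P(h_i)=1$ and $h_1h_2\le 0$ near $P$.

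For the sufficiency, I would proceed in three steps. First, use $B_C(S)\neq\R$ and Lemma~\ref{Lemma:VirtualCompactness}(2),(4) to replace $C$ by $\wt C=\Spec(B_C(S))$, in which $\ol S$ is semialgebraically compact. Second, invoke the local-global principle for saturation of finitely generated preorderings on a non-singular real curve under this compactness: $T$ is saturated in $R[C]$ if and only if for every $P$ in the real spectrum, the extension of $T$ to the completed local ring $\wh\sO_{C,P}$ contains every element of $\wh\sO_{C,P}$ non-negative on the local trace of $S$ at $P$. Third, check this local condition at every real $P$: since $C$ is non-singular, $\wh\sO_{C,P}\isom R[[t]]$, and the local trace of $S$ is one of $\emptyset$, $\{0\}$, $[0,\infty)$, or a two-sided neighbourhood of $0$. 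In each case the local preordering is explicit: interior points and empty trace require nothing, a boundary point requires a local uniformizer in the generators (supplied by (1)), and an isolated point requires a pair of generators whose product is a negative unit times $t^2$ (supplied by (2)). Non-real points and the deleted points at infinity are automatic because the local ring acquires $\sqrt{-1}$ and everything becomes a sum of squares.

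The hard part is the local-global principle in the second step. I would not reprove it from scratch but extract it from Scheiderer's general curve-theoretic framework, which combines stable finiteness with the compactness of $\ol S$ to clear strictly positive bounded denominators when lifting local representations in $\wh\sO_{C,P}$ to a global representation in $R[C]$. This is exactly where the hypothesis $B_C(S)\neq\R$ is essential: without it, the required bounded strictly positive denominators do not exist. The residual local verifications in $R[[t]]$ are a routine exercise in one-variable power-series algebra, organized so that each of the three non-trivial possibilities for the local trace of $S$ is matched with exactly one item of the hypothesis.
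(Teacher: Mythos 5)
This theorem is not proved in the paper: it is stated as a citation of Scheiderer (\cite{MR2020709}, Thm.~5.17 and \cite{MR1675230}, Thm.~3.5), so there is no in-paper argument to compare your proposal against. What you have produced is a sketch of Scheiderer's own strategy, and as such it is broadly sound in outline: necessity via the incompatibility of low-order psd functions with representations whose generators all vanish to order at least two, and sufficiency via a local-global principle for saturation in the virtually compact case, reducing to the three possible local traces of $S$ in $\wh\sO_{C,P}\isom\R[[t]]$.

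Two points in the sketch deserve more care. First, in the necessity of (1), the step ``summand-by-summand this prevents the total valuation from equalling $1$'' needs the compatibility of ${\rm ord}_P$ with an ordering of $\R(C)$ centered at $P$ on the $S$-side: under that ordering each $s_\alpha \ul h^\alpha$ is non-negative, so the valuation of the sum is the minimum of the valuations of the summands and no cancellation occurs. Without invoking this, the parity-and-$\ge 2$ argument does not by itself control ${\rm ord}_P(\sum_\alpha s_\alpha\ul h^\alpha)$. Second, the necessity of (2) is genuinely more delicate than ``a refinement of the same order analysis''. At an isolated point of $S$ the generators $h_i$ need not be non-negative on a punctured neighbourhood of $P$ (only at $P$ itself), so there is no ordering centered at $P$ under which every $s_\alpha\ul h^\alpha$ is automatically non-negative, and the one-sided valuation trick from (1) does not apply verbatim; one has to analyse the sign patterns of the order-one generators on \emph{both} branches at $P$ simultaneously. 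Your phrasing ``a psd $f$ of order exactly one that vanishes on a chosen side of $P$'' is not meaningful as written (a function of order one changes sign at $P$; it does not vanish on a side), and the interleaving claim is asserted rather than derived. Finally, the entire sufficiency direction rests on the local-global principle that you explicitly leave as a citation; that is reasonable for a cited background theorem, but it means the heaviest part of the proof is not actually carried out.
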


The results in the singular case are more complicated to state, and we
refer the reader to Scheiderer \cite{MR2020709}. The complementary
case $B_C(S)=\R$ is covered by the following result:

\begin{Thm}[Scheiderer \cite{MR1675230}, Thm.~3.5]\label{Thm:ScheidererPOCurvesNVC}\rule{0.1em}{0pt} 
Let $C$ be a non-singular, irreducible affine curve over $\R$, and let $S$ be a
basic closed subset of $C(\R)$ such that $B_C(S)=\R$. Then the
preordering $\sP_C(S)$ is finitely generated if and only if $C$ is an
open subcurve of $\A^1_\R$.
\end{Thm}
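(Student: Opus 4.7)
The plan is to handle the two directions of the biconditional separately. For the direction in which $C$ is already an open subcurve of $\A^1_\R$, I would reduce the statement to a classical one-variable representation theorem. For the converse direction, I would perform a local analysis at the points at infinity and combine it with Riemann--Roch to force $X\cong\P^1_\R$.

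For the forward implication, assume $C$ is an open subcurve of $\A^1_\R$, so $\R[C]=\R[t][g^{-1}]$ for some $g\in\R[t]$ with only real roots (the finitely many removed points). The hypothesis $B_C(S)=\R$ together with Lemma~\ref{Lemma:VirtualCompactness}(3) forces the point at infinity of $\P^1_\R$ to lie in the closure of $S$, so $S$ is a finite disjoint union of closed real intervals, at least one of which is unbounded. The classical description of the saturated finitely generated preordering associated to such a semialgebraic subset of the line (Kuhlmann--Marshall) yields explicit finite generators of $\sP_{\A^1_\R}(S)$ (linear polynomials defining the boundary points of $S$); these, together with $g$ and $g^{-1}$, finitely generate $\sP_C(S)$ inside $\R[C]$.

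For the reverse implication, suppose $\sP_C(S)=\PO(h_1,\dots,h_r)$ is finitely generated. Let $X$ be the smooth projective completion of $C$ and $P_1,\dots,P_n$ the points of $X\setminus C$; by Lemma~\ref{Lemma:VirtualCompactness}(3), the assumption $B_C(S)=\R$ forces each $P_i$ to be real and to lie in the closure of $S$ in $X(\R)$. The goal is to show the genus of $X$ is zero, since then choosing any $P_i$ as the point at infinity realizes $C$ as an open subcurve of $\A^1_\R$. To rule out $g(X)\ge 1$, I would argue by contradiction: at each $P_i$ fix a uniformizer $t_i$ and view $\R(C)\hookrightarrow\R((t_i))$. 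By Lemma~\ref{Cor:ValuationSOS}, the leading term at $P_i$ of any sum of squares $s=\sum_j f_j^2\in\R[C]$ is a strictly positive real multiple of $t_i^{2m}$ for some $m\in\Z$. Consequently the leading-term data at $P_i$ of any combination $\sum_\epsilon s_\epsilon h^\epsilon$ lies in a finite union of additive cones determined by the fixed pole orders and leading coefficients of the products $h^\epsilon$. A Riemann--Roch count for the space $L(\sum n_i P_i)$ (whose dimension is $\sum n_i-g+1$ for large $n_i$) then provides, when $g\ge 1$, a surplus of psd functions (for instance of the shape $c+q^2$ with large positive constant $c$ and $q$ chosen to have a carefully prescribed pole structure at the $P_i$) whose leading-term profile at some $P_i$ escapes these finitely many cones.

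The principal obstacle lies in the last step: the representations $f=\sum_\epsilon s_\epsilon h^\epsilon$ are not unique, and leading-term cancellations among summands with the same pole-parity at $P_i$ could in principle produce leading-term data beyond the naive bound. Controlling these cancellations requires a careful valuation-theoretic bookkeeping at each $P_i$, bounding the total leading-coefficient space achievable across all representations; this is the quantitative heart of the argument, and it is precisely the interplay of Riemann--Roch with the fixed finite set of generators $h_1,\dots,h_r$ that forces the failure of finite generation in positive genus.
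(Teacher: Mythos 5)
This statement is cited in the paper as Scheiderer's Theorem 3.5 from \cite{MR1675230} and is not proved in the paper at all, so there is no in-paper proof to compare against; I can only assess your sketch on its own terms.

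Your forward direction is essentially on the right track but imprecise. Writing $\R[C]=\R[t][g^{-1}]$ for $g$ with real roots is correct (the removed points must be real by $B_C(S)=\R$), and passing to the localization should not require any new generators beyond the Kuhlmann--Marshall natural generators: every element of $\R[C]$ is of the form $g^{-2N}p$ with $p\in\R[t]$ and $g^{-2N}$ is a square, so nonnegativity of $g^{-2N}p$ on $S$ passes to $p$, which is then handled by Theorem \ref{Thm:KMSubsetsLine}. Your claim that $g$ and $g^{-1}$ must be \emph{added} to the generating set is confused; $g$ need not be psd on $S$, and $g^{-1}$ is already taken care of by squares. That said, the core idea — reduce to the affine line and invoke Kuhlmann--Marshall — is right.

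The reverse direction is where the real content lies, and there your sketch has a genuine gap. Interestingly, the obstacle you flag (leading-term cancellation among the summands $s_\epsilon h^\epsilon$) is not actually the hard part: because each $P_i$ lies in $\ol{S}$, one can choose an ordering of $\R(C)$ compatible with $\ord_{P_i}$ along an $S$-direction, with respect to which every $h_j$, hence every $h^\epsilon$ and every $s_\epsilon h^\epsilon$, is nonnegative; the Baer--Krull argument underlying Lemma \ref{Cor:ValuationSOS} then rules out cancellation directly. The actual difficulty is the Riemann--Roch step, which as sketched does not work: when $g\ge 1$, Riemann--Roch gives $\ell(D)=\deg D-g+1$ for large $D$, which is a \emph{deficit}, not a surplus, and a function of the shape $c+q^2$ has a perfectly ordinary leading-term profile (positive, even order) that does not obviously escape the finitely many cones you describe. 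The known argument in the genus-$\ge 1$ case instead exploits the nontriviality of the divisor class group: one produces a psd $f\in\R[C]$ with $\div(f)=2D-2E$ ($D$ effective supported on $C(\R)$, $E$ effective supported at the $P_i$, equal degrees) such that $D-E$ is not principal; the valuation lemma at the $P_i$ and at the zeros then forces any sum-of-squares summand to have divisor exactly $D-E$, a contradiction. Your leading-term counting would need to be replaced by this divisor-class obstruction, and then further adapted to the preordering setting (incorporating the fixed generators $h_1,\dots,h_r$), which requires additional work not indicated in your sketch.
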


\medskip
In the case of the affine line, it is possible to say precisely
what the generators of the saturated preordering $\sP_C(S)$ must look
like:

\begin{Thm}[Kuhlmann-Marshall \cite{MR1926876}, Thm 2.2]\label{Thm:KMSubsetsLine}
Let $\sH$ be a finite subset of $\R[t]$, $T=\PO(\sH)$, and $S=\sS(T)$.
Assume that $B(S)=\R$ (i.e.~$S$ is not compact). Then $T$ is saturated
if and only if the following hold:
\begin{enumerate}
\item If $a=\min(S)$ exists, then $\lambda(t-a)\in\sH$ for some $\lambda>0$ in $\R$.
\item If $a=\max(S)$ exists, then $\lambda(a-t)\in\sH$ for some $\lambda>0$ in $\R$.
\item If $a<b\in S$ are such that $(a,b)\cap S=\emptyset$, then $\lambda(t-a)(t-b)\in\sH$ for some $\lambda>0$ in $\R$. 
\end{enumerate}
\end{Thm}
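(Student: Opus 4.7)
\emph{Setup.} The set $S=\sS(T)$ is a closed semialgebraic subset of $\R$, hence a finite disjoint union of closed intervals (some possibly degenerate to points). The hypothesis $B(S)=\R$ forces $S$ to be unbounded, so at least one component has the form $(-\infty,a]$ or $[b,\infty)$. The finite boundary features of $S$ are precisely $\min S$ (if it exists), $\max S$ (if it exists), and each bounded gap $(a,b)$ between consecutive components; the theorem asserts that saturation of $T$ is equivalent to each such feature being witnessed by a corresponding generator in $\sH$.

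\emph{Necessity.} Suppose $T=\sP(S)$. For (1), set $a=\min S$. Then $t-a\ge 0$ on $S$, so $t-a\in T$ and admits a representation
\[
t-a=\sum_{I\subseteq\{1,\dots,r\}}\sigma_I\prod_{i\in I}h_i, \qquad \sigma_I\in\sum\R[t]^2.
\]
Applying $\ord_a$, the LHS has order $1$ while each $\sigma_I$ has even order, so some product $\prod_{i\in I}h_i$ must vanish at $a$ to order exactly $1$. Hence some $h_i\in\sH$ has $h_i(a)=0$ and $h_i'(a)\neq 0$; since $h_i\ge 0$ on $[a,a+\epsilon)\subset S$, in fact $h_i'(a)>0$. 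To promote this to $h_i=\lambda(t-a)$ exactly, one exploits the unboundedness of $S$: any additional real root of $h_i$ would have to lie outside $S$, and the balance of leading terms in the identity above, controlled via the real valuation at infinity of $\A^1_\R$ (Lemma \ref{Cor:ValuationSOS}), yields a contradiction unless $h_i$ is exactly linear. Condition (2) is symmetric to (1) under $t\mapsto -t$. For (3), a bounded gap $(a,b)$ gives $(t-a)(t-b)\ge 0$ on $S$, hence $(t-a)(t-b)\in T$, and a parallel local analysis at both $a$ and $b$, combined with the growth-at-infinity argument, forces $\lambda(t-a)(t-b)\in\sH$.

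\emph{Sufficiency.} Assume (1)--(3) and let $f\in\sP(S)$. Factor $f=c\prod_k(t-\alpha_k)^{m_k}\prod_j q_j$, with $q_j$ irreducible quadratic (each a sum of two squares) and $\alpha_k\in\R$. Real roots interior to $S$ have even multiplicity (else $f$ changes sign on $S$) and contribute squares. Real roots at $\min S$ or $\max S$ with odd multiplicity peel off via the linear generators from (1) and (2). For roots inside the closure $[a,b]$ of a bounded gap, a sign-consistency check across the gap shows that the total multiplicity of odd-order zeros of $f$ in $[a,b]$ is even, so these zeros can be paired. Each pair $(t-\alpha_1)(t-\alpha_2)$ with $\alpha_1,\alpha_2\in[a,b]$ is strictly positive on $S$; for $\lambda>0$ sufficiently small, $(t-\alpha_1)(t-\alpha_2)-\lambda(t-a)(t-b)$ remains $\ge 0$ on $S$ with strictly fewer odd-order boundary zeros, so induction on $\deg f$ places it in $T$, and hence $(t-\alpha_1)(t-\alpha_2)\in T$. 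Iterating extracts $f$ into a product of generators and a sum of squares, so $f\in T$.

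\emph{Main obstacle.} The chief difficulty lies in the sufficiency step for gaps: one must reduce an arbitrary pair $(t-\alpha_1)(t-\alpha_2)$ with $\alpha_1,\alpha_2\in[a,b]$ to the canonical generator $\lambda(t-a)(t-b)$, and organize the pairing so that the induction on $\deg f$ terminates. The parity argument across a gap must be tracked carefully when $f$ has additional odd-order zeros inside $(a,b)$, where signs are unconstrained by $f\ge 0$ on $S$. The necessity direction is comparatively routine once the interplay between local orders of vanishing at the boundary features and the degree-valuation at infinity (Lemma \ref{Cor:ValuationSOS} applied at the real point at infinity of $\A^1_\R$, available precisely because $B(S)=\R$) is in place.
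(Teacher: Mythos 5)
The paper does not prove this statement; it is cited verbatim from Kuhlmann--Marshall \cite{MR1926876}, Thm.~2.2, so there is no in-paper argument to compare against. Judged on its own, your sketch identifies the two correct tools (order of vanishing at the finite boundary features, plus the real valuation at the real point at infinity of $\P^1_\R$, which governs leading coefficients and is available exactly because $S$ is unbounded), but there is a genuine gap in the sufficiency direction and some imprecision in the necessity direction.

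The problem in sufficiency is the reduction
``$(t-\alpha_1)(t-\alpha_2)-\lambda(t-a)(t-b)\ge 0$ on $S$ with strictly fewer odd-order boundary zeros, so induction on $\deg f$ places it in $T$.''
This difference has the same degree $2$ as $(t-\alpha_1)(t-\alpha_2)$, so induction on degree cannot close, and it is not clear that the count of odd-order boundary zeros decreases (or even remains controlled). In fact no induction is needed here: for $\alpha\in[a,b]$ write $t-\alpha=\mu(t-a)+\nu(t-b)$ with $\mu=\tfrac{b-\alpha}{b-a}\ge 0$ and $\nu=\tfrac{\alpha-a}{b-a}\ge 0$; multiplying two such expressions gives
\[
(t-\alpha_1)(t-\alpha_2)=\mu_1\mu_2(t-a)^2+\nu_1\nu_2(t-b)^2+(\mu_1\nu_2+\mu_2\nu_1)(t-a)(t-b),
\]
which visibly lies in $\PO\bigl((t-a)(t-b)\bigr)$. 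A related caveat: the parity claim ``the total multiplicity of odd-order zeros of $f$ in $[a,b]$ is even'' is clean only when $f(a),f(b)>0$; when $f$ vanishes at $a$ or $b$ you must first peel off the corresponding boundary factor before counting.

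In the necessity direction, ``the balance of leading terms $\dots$ yields a contradiction unless $h_i$ is exactly linear'' hides the step that actually does the work. After discarding positive constants from $\sH$ (which changes neither $S$ nor $\PO(\sH)$), apply Lemma~\ref{Cor:ValuationSOS} at the real place at infinity compatible with the unbounded end of $S$: since every summand $\sigma_I\prod_{i\in I}h_i$ is $\ge 0$ in the associated ordering, no cancellation of leading terms can occur, and each such summand with $\sigma_I\neq 0$ has degree at most $\deg(t-a)=1$. Hence $\sigma_I$ is constant, $|I|\le 1$, and any $h_i$ appearing is linear. The local argument at $a$ then forces $\sigma_\emptyset=0$ and singles out a linear $h_i$ vanishing at $a$ with positive slope, i.e.\ $\lambda(t-a)\in\sH$. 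The same degree bound (now $\le 2$) is what pins down $\lambda(t-a)(t-b)\in\sH$ in condition (3); to get a single generator you must also argue that the surviving product $\prod_{i\in I}h_i$ vanishes simply at both $a$ and $b$ and nowhere else, which again follows from $\deg\prod_{i\in I}h_i\le 2$.
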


Properties (1)--(3) of the theorem specify a minimal set of generators for $T$
(unique up to positive scalars) that Kuhlmann and Marshall call the \emph{natural generators}. 

\medskip
We now turn our attention to reducible curves, but will only treat the
simplest case and some examples. We need the following

\begin{Lemma}
Let $C$ be a connected affine curve over $R$ with irreducible
components $C_1,\dots,C_m$. Assume that all intersection points of $C$
are ordinary multiple points with independent tangents and that the
graph $\Gamma_C$ is a forest. Then for all
$i=1,\dots,m$ and all $f\in R[C]$, there exists a unique
function $f_{(i)}\in R[C]$ such that $f_{(i)}|_{C_i}=f|_{C_i}$ and
$f_{(i)}|_{C_j}$ is constant for all $j\neq i$.
\end{Lemma}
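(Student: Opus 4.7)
The plan is to use Corollary \ref{Cor:FnOnRedCurves} to reduce to a combinatorial statement about the tree $\Gamma_C$ and then prescribe $f_{(i)}$ by propagating values along the unique paths from $C_i$. Since $C$ is connected, so is $\Gamma_C$, and being also a forest it is a tree. Under hypothesis (1) on intersection points, Cor.~\ref{Cor:FnOnRedCurves} identifies $R[C]$ with the subring of $\prod_j R[C_j]$ consisting of tuples $(g_j)$ such that $g_j(P)=g_k(P)$ for every $P\in C_j\cap C_k$ and all $j,k$. Note that the tree condition on $\Gamma_C$ forces any two distinct components to meet in at most one point (two points would create a $2$-cycle).

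Next I would root $\Gamma_C$ at the vertex $C_i$. For each $j\neq i$, let $C_{\pi(j)}$ be the unique neighbor of $C_i$ lying on the path from $C_i$ to $C_j$ in the tree, and let $P_j\in C_i\cap C_{\pi(j)}$ be the corresponding (unique) intersection point. Define $f_{(i)}=(g_j)_{j=1,\dots,m}$ by $g_i=f|_{C_i}$ and $g_j=f(P_j)\in R$ for $j\neq i$. Each $g_j$ is constant by construction, so I only need to verify that the matching conditions of Cor.~\ref{Cor:FnOnRedCurves} hold at every intersection point.

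Every intersection point of $C$ corresponds to an edge of the tree, which, after rooting, connects some vertex to its parent. For an edge incident to the root, say $P\in C_i\cap C_k$, one has $\pi(k)=k$ and $P_k=P$, so $g_k=f(P)=g_i(P)$. For an edge between two non-root vertices, say $P\in C_j\cap C_k$ with $C_j$ the parent of $C_k$, the path from $C_i$ to $C_k$ passes through $C_j$, hence $\pi(k)=\pi(j)$ and thus $g_k=f(P_k)=f(P_j)=g_j$. This establishes that $f_{(i)}\in R[C]$.

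Uniqueness follows from the same propagation argument read backwards: any candidate $f_{(i)}$ is forced to satisfy $f_{(i)}|_{C_i}=f|_{C_i}$, and for any $j\neq i$ the matching condition at $P_j$ forces the constant value on $C_{\pi(j)}$ to equal $f(P_j)$; iterating along the path from $C_i$ to $C_j$, each further matching condition identifies two constants and shows the value on $C_j$ must equal $f(P_j)$ as well. No step is hard; the only thing to get right is using that $\Gamma_C$ is a tree to guarantee that (a) there is a unique path from $C_i$ to any $C_j$, so the constants are unambiguously defined, and (b) each pair of meeting components meets in exactly one point, so there is no conflict in the matching condition between $C_i$ and any of its neighbors.
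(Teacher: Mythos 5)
Your proof is correct, and it takes a slightly different route from the paper's. The paper argues by induction on the number of irreducible components, using Lemma~\ref{Lemma:RelabelCurves} to peel off a leaf $C_m$ of $\Gamma_C$ and gluing along the single point $C_m\cap C'$, leaving the propagation of constants implicit in the inductive step. You instead give a direct, global construction: root the tree at $C_i$, define the constant on each $C_j$ by evaluating $f$ at the intersection point of $C_i$ with the first component on the path to $C_j$, and check the compatibility conditions of Cor.~\ref{Cor:FnOnRedCurves} edge by edge. Both arguments hinge on the same combinatorial facts, which you correctly make explicit: in a tree, paths from the root are unique, and no two distinct components share more than one intersection point (a double intersection would create a $2$-cycle in $\Gamma_C$), the latter being essential since a second shared point would in general obstruct the required constancy. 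Your version has the minor advantage of making uniqueness fully transparent --- each matching condition along a root-to-leaf path pins down the next constant --- whereas the paper compresses this into ``the claim follows easily by induction.''
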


\begin{proof}
The claim is trivial for $m=1$, so assume $m\ge 2$ and put
$C'=C_1\cup\cdots\cup C_{m-1}$. By Lemma \ref{Lemma:RelabelCurves}, we
may relabel and assume that $C_m\cap C'$ consists of a single point
$P$. Now $R[C]=\bigl\{(f,f')\in R[C_m]\times R[C']\:\bigl|\:
f(P)=f'(P)\bigr\}$ by Cor.~\ref{Cor:FnOnRedCurves}, so for every
$(f,f')\in R[C]$ we can form $(f'(P),f')\in R[C]$ and $(f,f(P))\in
R[C]$. From this the claim follows easily by induction.
\end{proof}

\begin{Prop}\label{Prop:SatPORedCurv}
Let $C$ be a connected affine curve over $R$ with irreducible
components $C_1,\dots,C_m$, let $\sH\subset R[C]$ be a finite subset,
and let $S=\sS(\sH)$ and $T=\PO(\sH)$. Assume that the following
conditions are satisfied:
\begin{enumerate}
\item The induced preordering $T|_{C_i}$ is saturated for all
$i=1,\dots,m$.
\item All intersection points of $C$ are real ordinary multiple points
with independent tangents and are contained in $S$.
\item The graph $\Gamma_C$ is a tree. 
\item If $h\in\sH$, then $h_{(i)}\in\sH$ for all $i=1,\dots,m$, where $h_{(i)}$ is defined as in the preceding lemma.
\end{enumerate}
\end{Prop}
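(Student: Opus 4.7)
The conclusion I aim to prove (implicit in the statement) is $T=\sP_C(S)$, i.e., that $T$ is saturated. My plan is induction on the number $m$ of irreducible components. The base case $m=1$ is exactly hypothesis (1). For $m\ge 2$, I would use Lemma~\ref{Lemma:RelabelCurves} together with (3) to pick a leaf $C_m$ of $\Gamma_C$, so that $C':=C_1\cup\cdots\cup C_{m-1}$ is a connected sub-curve with $C_m\cap C'=\{P\}$. The hypotheses (1)--(4) transfer to $C'$ with the restricted generator set $\sH|_{C'}$: (1) because $T|_{C_i}=\PO_{R[C_i]}(\sH|_{C_i})$; (2) because every intersection point of $C'$ is one of $C$; (3) because deleting a leaf of a tree leaves a tree; and (4) because $(h|_{C'})_{(i)}=h_{(i)}|_{C'}\in\sH|_{C'}$ for $i\le m-1$. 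By induction, $T':=\PO_{R[C']}(\sH|_{C'})$ is saturated.

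The heart of the argument is to realize, for each $f\in\sP_C(S)$ and each $i\in\{1,\dots,m\}$, the function $f_{(i)}$ of the preceding lemma as an \emph{explicit} element of $T$. Using $f|_{C_i}\in T|_{C_i}$ from (1), write $f|_{C_i}=\sum_I\sigma_I^{(i)}\underline{h}^I|_{C_i}$ with $\underline{h}^I$ a product of elements of $\sH$ and $\sigma_I^{(i)}=\sum_k q_{I,k}^2\in\sum R[C_i]^2$. Extend each square root $q_{I,k}$ to $\tilde q_{I,k}\in R[C]$ by setting $\tilde q_{I,k}|_{C_i}=q_{I,k}$ and extending by the unique constant compatible at each intersection point to every other component (well-defined because $\Gamma_C$ is a tree). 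Then $\tilde\sigma_I^{(i)}:=\sum_k\tilde q_{I,k}^2$ is a sum of squares in $R[C]$ lifting $\sigma_I^{(i)}$. By (4), $h_{(i)}\in\sH$ for every $h\in\sH$, so $\prod_{h\in I}h_{(i)}\in T$ lifts $\underline{h}^I|_{C_i}$ and is constant on each $C_j$, $j\ne i$. Combining, $f_{(i)}:=\sum_I\tilde\sigma_I^{(i)}\prod_{h\in I}h_{(i)}\in T$ is the function from the lemma.

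A direct computation on each component, using the tree structure, yields the product identity $\prod_{i=1}^m f_{(i)}=\Pi\cdot f$ in $R[C]$, where $\Pi:=\prod_e f(P_e)$ ranges over all intersection points of $C$; each factor is $\ge 0$ by (2). In the generic case $\Pi>0$, the scalar $\Pi^{-1}\in R$ is a positive real, hence a sum of squares, and so $f=\Pi^{-1}\prod_i f_{(i)}\in T$ as a product of elements of $T$. This settles all $f$ that are strictly positive at every intersection point.

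The main obstacle is the degenerate case where $f(P_e)=0$ at some intersection point $P_e$. Here I would remove $e$ from $\Gamma_C$, splitting it into two subtrees whose corresponding sub-curves $D_a,D_b$ meet transversally at $P_e$ and satisfy the hypotheses individually. Because $f(P_e)=0$, one decomposes $f=F_a+F_b$ with $F_a|_{D_a}=f|_{D_a}$, $F_a|_{D_b}=0$ (and symmetrically for $F_b$), and the task reduces to $F_a,F_b\in T$. The inductive hypothesis produces a representation $f|_{D_a}=\sum_I\sigma_I\underline{h}^I|_{D_a}$ in $T|_{D_a}$, and for each $I$ the vanishing $f(P_e)=0$ forces $\sigma_I(P_e)\underline{h}^I(P_e)=0$, so either $\sigma_I(P_e)=0$ (in which case all square roots $q_{I,k}$ vanish at $P_e$ and lift to elements of $R[C]$ that are zero on $D_b$) or $\underline{h}^I(P_e)=0$ (in which case some $h\in I$ has $h(P_e)=0$, and replacing that $h$ by $h_{(a)}\in\sH$—where $C_a$ is the $D_a$-endpoint of $e$—kills the $D_b$-restriction because $h_{(a)}|_{D_b}$ is the constant $h(P_e)=0$). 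Summing these termwise lifts gives $F_a\in T$. The combinatorial bookkeeping to make this work across all $I$ simultaneously and across multiple intersection points where $f$ vanishes is where I expect the main technical work to lie; condition (4) is used precisely to supply the replacement generators needed for the termwise cancellation on $D_b$.
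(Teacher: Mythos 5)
Your approach to the generic case is genuinely different from the paper's and works. Where the paper inductively peels off a single leaf component $C_m$ (Lemma~\ref{Lemma:RelabelCurves}), writes $R[C]\isom\{(g,g')\in R[C_m]\times R[C']\,:\,g(P)=g'(P)\}$, and uses the identity $F=(g,g(P))\cdot (1,g(P)^{-1}g')$ when $F(P)>0$ and $F=(g,0)+(0,g')$ when $F(P)=0$, you instead lift $f$ component-by-component to elements $f_{(i)}\in T$ and then exploit the global product identity $\prod_{i=1}^m f_{(i)}=\Pi\cdot f$ with $\Pi=\prod_{e}f(P_e)$ over all intersection points. Your proof that $f_{(i)}\in T$ is correct, matches the paper's first step in spirit, and the product identity is a clean observation valid for trees. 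When $\Pi>0$ this finishes the argument more directly than the paper's incremental decomposition. What the paper's approach buys is a uniform treatment of the boundary case: since only one intersection point is involved at a time, the degenerate case $F(P)=0$ reduces to the additive decomposition $F=(g,0)+(0,g')$, each summand being constant on one side and hence handled by the two lifting steps.

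The degenerate case is where your sketch has a genuine gap. When $\underline{h}^I(P_e)=0$ and you replace the vanishing factor $h$ by $h_{(a)}$, you correctly make the lift vanish on $D_b$, but you also change its restriction to $D_a$: by construction $h_{(a)}$ agrees with $h$ only on $C_a$ and is \emph{constant} on every other component of $D_a$, so $h_{(a)}|_{D_a}\neq h|_{D_a}$ whenever $D_a$ has more than one component. Thus the proposed termwise lift does not restrict to $\sigma_I\,\underline{h}^I|_{D_a}$ on $D_a$, and summing over $I$ does not recover $f|_{D_a}$; the conclusion $F_a\in T$ does not follow. Hypothesis~(4) only supplies the single-component replacements $h_{(i)}$, not a generator that agrees with $h$ on the whole subtree $D_a$ and is constant on $D_b$, which is what your construction would actually need. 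The paper sidesteps this exactly by never attempting a termwise lift across a multi-component subcurve: its two lifting claims concern the leaf $C_m$ (a single component, where $(H_j)_{(m)}$ is the correct replacement) and the complementary connected subcurve $C'$ (handled recursively by the same leaf-splitting), and the degenerate case $F(P)=0$ is dealt with by addition rather than by modifying the generators. To repair your argument along your own lines you would need to run the analogue of your case~1/case~2 analysis recursively down the tree $D_a$ (i.e., prove the lifting claim ``$(f|_{D})_{(D)}\in T$ for every connected subcurve $D$'' by induction on the size of $D$, using the $+/\cdot$ decomposition at a leaf of $D$), at which point the argument essentially becomes the paper's.
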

\noindent Then $T$ is saturated.

\begin{proof}
The claim is trivial for $m=1$ because of condition (1), so assume
$m\ge 2$. Write $C=C_m\cup C'$, $C_m\cap C'=\{P\}$, as in the proof of
the preceding lemma using hypotheses (2) and (3), and write
$\sH=\{H_1,\dots,H_r\}$, $H_i=(h_i,h_i')$. We first show that for every $f\in R[C_m]$
with $f\ge 0$ on $S\cap C_m(R)$, the function $F_{(m)}=(f,f(P))\in R[C]$ is
contained in $T$. By hypothesis (1), $f$ has a representation
$f=\sum_i s_i\ul h^i$ with $s_i\in\sum R[C_m]^2$. Thus we can write
$(f,f(P))=\sum_i (s_i,s_i(P)) (\ul h^i,\ul h^i(P))=\sum_i(s_i,s_i(P))\ul
H_{(m)}^i$, and $\ul H_{(m)}^i\in\sH$ by hypothesis (4); thus $(f,f(P))\in T$,
as claimed. In the same way, we show $(f'(P),f')\in T$ for every
$f'\in R[C']$ with $f'\ge 0$ on $S\cap C'(R)$, using the induction hypothesis
instead of (1).

Now let $F=(f,f')\in R[C]$, and assume that $F\ge 0$ holds on $S$. If
$F(P)=0$, then $F=(f,0)+(0,f')\in T$ by what we have just shown. If
$F(P)\neq 0$, then $F(P)>0$ by hypothesis (2), and we can write
$F=(f,f(P))(1,f(P)^{-1}f')\in T$.
\end{proof}

\begin{Remark*}
Note that we also could have proved Prop.~\ref{Prop:redcurvetransversal}
by the same method employed here.
\end{Remark*}

\begin{Examples}\label{Ex:ReduciblePO}
\begin{enumerate}
\item Let $C$ be the plane curve $\{xy=0\}$ in $\A^2_R$. Write
$R[C]=\bigl\{(f,g)\in R[u]\times R[v]\:\bigl|\: f(0)=g(0)\bigr\}$, and
consider the preordering $T=\PO((u,0),(0,v))$ defining the
semialgebraic set $S=\sS(T)=\{(x,y)\in C(R)\:|\: x\ge 0\land y\ge
0\}$.  By the proposition, $T$ is saturated. However, the proposition
does not apply to the preodering $\PO((u,v))$ that defines the same
set, because condition (4) is violated. In fact,
$(u,0)\notin\PO((u,v))$ since $(u,0)=(s_1,t_1)+(s_2,t_2)(u,v)$ with
$s_i\in\sum R[u]^2$ and $t_i\in\sum R[v]^2$ would imply
$t_1=t_2=s_1=0$ by degree considerations and $s_2=1$, but $(1,0)\notin
R[C]$. Thus $\PO((u,v))$ is not saturated.

\item Let $C$ be as before, and let $T=\PO((u^2-1,v^2-1))$, so that
$S=\sS(T)=\{(x,y)\in C(R)\:|\: |x|\ge 1\land |y|\ge 1\}$. Here, the
intersection point $(0,0)$ is not contained in $S$, and the
proposition does not apply.  In fact, $T$ is not saturated: Let $f\in
R[u]$ be any quadratic polynomial that is non-negative on
$S\cup\{(0,0)\}$ but not psd (i.e.~not a sum of squares in
$R[u]$). For example, $f=(u-\frac 14)(u-\frac 34)$ will do.  Then
$(f,f(0))$ is a function in $R[C]$ that is non-negative on $S$ but not
contained in $T$: For $(f,f(0))=(s_1,t_1)+(s_2,t_2)(u^2-1,v^2-1)$
would imply $t_1=f(0)$ and $t_2=0$ by degree considerations, so we
must have $s_2(0)=0$. On the other hand, $s_2\in R$ again because of
the degree of $f$, so $(f,f(0))=(s_1,t_1)$, a contradition. It is not
clear whether the preordering $\sP_C(S)$ is finitely generated. The
fact that the intersection point is not contained in $S$ makes
condition (4) of the proposition unfulfillable, and it is not clear
what a suitable replacement should look like.  The best guess for a
set of generators in this particular instance seems to be
$\{(u^2-1,v^2-1),(u(u-1),0),(u(u+1),0), (0,v(v-1)),(0,v(v+1))\}$, but I
have been unable to verify this.
\end{enumerate}
\end{Examples}

\section{Applications to the moment problem}\label{Sec:MP}

Given a closed subset $K\subset\R^n$, the existence part of the
$K$-moment problem of functional analysis asks for a characterisation
of those linear functionals $L\colon\R[x_1,\dots,x_n]\into\R$ for
which there exists a (positive) Borel-measure $\mu$ supported on $K$
such that $L(f)=\int_K f d\mu$ holds for all
$f\in\R[x_1,\dots,x_n]$. Let $V$ be a real affine algebraic variety,
and let $K=\sS(h_1,\dots,h_r)$ be a basic closed semialgebraic subset
of $V(\R)$, defined by elements $h_1,\dots,h_r\in\R[V]$; let
$T=\PO(h_1,\dots,h_r)$ be the corresponding finitely generated
preordering of $\R[V]$. In an algebraic reformulation of the
$K$-moment problem (see for example Schm\"udgen \cite{MR1979186}, and
Powers-Scheiderer \cite{MR1823953}), one says that $T$ has the
\emph{strong moment property} (SMP) if $L|_T\ge 0$ implies
$L|_{\sP(K)}\ge 0$ for every linear functional
$L\colon\R[V]\into\R$. In other words, $T$ has the strong moment
property if the cone $T$ and the psd cone of $K$ in $\R[V]$ cannot be
separated by any linear functional. In particular, any saturated
preordering has property (SMP). A classical result of Haviland says
that $L|_{\sP(S)}\ge 0$ is necessary and sufficient for the existence
of a measure representing $L$. So if $T$ has the strong moment
property, this condition is replaced by the more manageable condition
$L|_T\ge 0$. Apart from the functional-analytic importance, the strong
moment property can be seen simply as an approximation property for
psd elements by elements of $T$. It is a consequence of Schm\"udgen's
Positivstellensatz that $T$ has the strong moment property whenever
$K$ is compact; see \cite{MR1092173}. In 2004, Schm\"udgen went on to
prove the following:

\begin{Thm}[Schm\"udgen's fibration theorem \cite{MR1979186}]
\label{Thm:SchmuedgenFibreThm}
Let $V$ be an affine $\R$-variety, $T$ a finitely generated
preordering of $\R[V]$, and $K=\sS(T)$. Assume that $\phi\colon
V\into\A^m$ is a real polynomial map such that the closure of
$\phi(K)$ in $\R^m$ is compact.  Then $T$ has property (SMP) if and
only if $T|_{\phi^{-1}(a)}$ has property (SMP) for all $a\in\R^m$.
\end{Thm}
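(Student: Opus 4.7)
The easier direction (SMP of $T$ implies SMP on each fiber) is direct. Given $a\in\R^m$ and a linear functional $L_a$ on $\R[\phi^{-1}(a)]$ nonnegative on $T|_{\phi^{-1}(a)}$, lift via the restriction map $\R[V]\epi\R[\phi^{-1}(a)]$ to a functional $\tilde L_a$ on $\R[V]$ that is nonnegative on $T$ and annihilates the ideal $I_a=(\phi_1-a_1,\dots,\phi_m-a_m)$. By (SMP) for $T$, there is a positive Borel measure $\tilde\mu$ on $K$ representing $\tilde L_a$. From $\int(\phi_i-a_i)^2\,d\tilde\mu=\tilde L_a((\phi_i-a_i)^2)=0$ we conclude that $\phi_i=a_i$ holds $\tilde\mu$-almost everywhere, so $\tilde\mu$ is supported on $\phi^{-1}(a)\cap K$ and descends to a representing measure for $L_a$.

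For the converse, the plan is a disintegration argument: an arbitrary $L\colon\R[V]\to\R$ with $L|_T\ge 0$ will be written as an integral of fiberwise functionals, to which the hypothesis applies, and the resulting fiberwise measures will be reassembled into a single measure on $K$. The first step is to construct a pushforward measure $\nu$ on $K':=\overline{\phi(K)}\subset\R^m$ such that $L(q(\phi))=\int q\,d\nu$ for every $q\in\R[y_1,\dots,y_m]$. The candidate functional $M(q):=L(q(\phi))$ is nonnegative on sums of squares; the compactness of $K'$ together with the boundedness of each $\phi_i$ on $K$ is then used to extend $M$ to a positive linear functional on $C(K')$ via a GNS / spectral-theorem argument (viewing the $\phi_i$ as bounded self-adjoint operators on the Hilbert space associated to $L$), whereupon the Riesz representation theorem produces $\nu$.

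Next, a regular-conditional-probability argument gives, for $\nu$-almost every $a\in K'$, a linear functional $L_a\colon\R[\phi^{-1}(a)]\to\R$ satisfying
\[
L(f)=\int_{K'} L_a(f|_{\phi^{-1}(a)})\,d\nu(a)
\]
for every $f\in\R[V]$. Running through a countable dense family of Hankel-type conditions that characterise nonnegativity on $T|_{\phi^{-1}(a)}$ and discarding countably many $\nu$-null sets yields $L_a\ge 0$ on $T|_{\phi^{-1}(a)}$ for $\nu$-almost every $a$. The fiberwise (SMP) hypothesis then produces positive Borel measures $\mu_a$ on $\phi^{-1}(a)\cap K$ representing each $L_a$; setting $\mu(B):=\int\mu_a(B)\,d\nu(a)$ for Borel $B\subseteq K$ and invoking Fubini gives a measure on $K$ that represents $L$, establishing (SMP) for $T$. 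The main obstacle is Step~1, the construction of $\nu$: nonnegativity of $L$ on $T$ does not directly imply that $M$ is nonnegative on all polynomials nonnegative on $K'$, since the archimedean certificates from Schm\"udgen's compact Positivstellensatz involve the generators $N^2-y_i^2$ whose pullbacks $N^2-\phi_i^2$ need not lie in $T$; the functional-analytic workaround based on the boundedness of the $\phi_i$ on $K$ is what makes Schm\"udgen's proof delicate. A secondary obstacle is the measurability of the maps $a\mapsto L_a$ and $a\mapsto\mu_a(B)$, handled by standard machinery from direct-integral theory.
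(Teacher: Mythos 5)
The paper does not prove this theorem; it is quoted from Schm\"udgen \cite{MR1979186}, with the remark that the original proof ``relies heavily on operator theory'' and that Marshall and Netzer have since given more elementary arguments. Your sketch follows the original operator-theoretic line. The easy direction is essentially correct, modulo making explicit the use of Haviland's theorem to pass from $\tilde L_a|_{\sP(K)}\ge 0$ (which is what SMP literally gives) to an actual representing measure $\tilde\mu$. In the hard direction you have correctly identified the central obstruction in Step~1: boundedness $|\phi_i|\le N$ on $K$ does not give $N^2-\phi_i^2\in T$, so $L|_T\ge 0$ alone does not yield operator bounds for the GNS operators $\pi(\phi_i)$, and a genuine spectral-theoretic argument is needed.

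There is, however, a gap your sketch glosses over that the paper explicitly flags. The disintegration produced by spectral theory yields, for $\nu$-a.e.\ $a$, a functional on $\R[V]$ that annihilates the ideal $I_a=(\phi_1-a_1,\dots,\phi_m-a_m)$, i.e.\ a functional on $\R[V]/I_a$, the coordinate ring of the \emph{scheme-theoretic} fiber. You assert that the disintegration directly produces $L_a\colon\R[\phi^{-1}(a)]\to\R$, but $\R[\phi^{-1}(a)]=\R[V]/\sqrt{I_a}$ is the \emph{reduced} fiber ring, and nothing in the direct-integral argument forces $L_a$ to annihilate $\sqrt{I_a}$ rather than merely $I_a$. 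Passing between these two versions of the theorem --- showing SMP for $T+I_a$ is equivalent to SMP for $T+\sqrt{I_a}$ --- is a separate nontrivial step; the paper refers to Scheiderer \cite{MR2182447}, section~4, and \cite{MeineDiss}, Lemma~2.3, precisely for this point. A complete writeup along your lines must address it. (The same issue already appears implicitly in your ``Hankel-type'' measurability argument, which presupposes that the $L_a$ live on the reduced fiber rings.)
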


Schm\"udgen's proof relies heavily on operator theory. In the meantime,
Marshall and Netzer have (independently) developed more
elementary proofs; see Netzer \cite{MR2358493} and Marshall
\cite{MR2383959}, Ch.~4.

Note that $T|_{\phi^{-1}(a)}$ denotes the restriction of $T$ to the
reduced fibre $\phi^{-1}(a)$, i.e.~if we write $a=(a_1,\dots,a_m)$,
$\phi= (\phi_1,\dots,\phi_m)$ with $\phi_i\in B_V(K)$, and
$I_a=(\phi_i-a_i\;|\;i\in\{1,\dots,m\})$, then
$\phi^{-1}(a)=\sV(\sqrt{I_a})$ so that
$T|_{\phi^{-1}(a)}=(T+\sqrt{I_a})/\sqrt{I_a}$. In other words,
$T|_{\phi^{-1}(a)}$ is the preordering induced by $T$ in the
coordinate ring $\R[\phi^{-1}(a)]=\R[V]/\sqrt{I_a}$ of the fibre. Schm\"udgen states his
theorem for $T+I_a$, and some additional effort is required to
pass from his version to the one above; see Scheiderer
\cite{MR2182447}, section 4, or \cite{MeineDiss}, Lemma 2.3.

\medskip
We can apply Schm\"udgen's fibration theorem to reduce the
moment problem for curves to the case where there do not exist any
non-constant bounded functions. In that case, the moment property is
equivalent to saturatedness, by a result of Powers and Scheiderer. 

\begin{Prop}\label{Thm:ReductionFibreThmCurves}
Let $C$ be an affine curve over $\R$, let $T$ be a finitely generated
preordering of $\R[C]$, and put $K=\sS(T)$. Let $C'$ be the union of all
irreducible components $C_i$ of $C$ for which $B_{C_i}(K\cap
C_i(\R))=\R$. The following are equivalent:
\begin{enumerate}
\item $T$ has the strong moment property;
\item $T|_{C'}$ has the strong moment property;
\item $T|_{C'}$ is saturated.
\end{enumerate}
\end{Prop}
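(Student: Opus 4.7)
The plan is to establish (2) $\Leftrightarrow$ (3) directly from the Powers--Scheiderer characterisation \cite{MR1823953}, which identifies SMP with saturatedness for a finitely generated preordering on a curve whose irreducible components admit no non-constant bounded functions on the basic set under consideration. By construction this hypothesis is satisfied by $T|_{C'}$, so the equivalence is immediate.

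The main content lies in (1) $\Leftrightarrow$ (2), which I would prove using Schm\"udgen's fibration theorem (Thm.~\ref{Thm:SchmuedgenFibreThm}). Let $C_1'', \dots, C_s''$ denote the irreducible components of $C$ not contained in $C'$; by definition each admits a non-constant bounded function on $K \cap C_j''(\R)$. Lemma \ref{Lemma:VirtualCompactness}(5) then provides, for each $j$, a non-constant $g_j \in B_C(K) \subseteq \R[C]$ vanishing identically on every irreducible component of $C$ other than $C_j''$. Set $\phi = (g_1,\dots, g_s) \colon C \to \A^s$. Since each $g_j$ is bounded on $K$, the closure $\overline{\phi(K)}$ is compact, and Schm\"udgen's theorem reduces SMP for $T$ to SMP for $T|_{\phi^{-1}(a)}$ at every $a \in \R^s$.

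I would then analyse the fibres. For $a \neq 0$, any $P \in \phi^{-1}(a)$ must lie on some $C_k''$ (as $\phi \equiv 0$ on $C'$), and because $g_j|_{C_k''} = 0$ for $j \neq k$, we are forced to have $a = a_k e_k$ with $a_k \neq 0$; the fibre then lies in the finite set $\{g_k = a_k\} \cap C_k''$, so $T|_{\phi^{-1}(a)}$ trivially has SMP. The fibre $\phi^{-1}(0)$, as a reduced scheme, is supported on $C' \cup F$, where $F = \bigcup_k \{g_k = 0\} \cap C_k''$ is finite. Writing $F' = F \setminus C'$, one has $\phi^{-1}(0) = C' \sqcup F'$ as a disjoint union of closed subsets, so $\R[\phi^{-1}(0)] \cong \R[C'] \times \R[F']$ and the induced preordering decomposes accordingly. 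Since SMP is automatic on the zero-dimensional factor, SMP for $T|_{\phi^{-1}(0)}$ is equivalent to SMP for $T|_{C'}$, giving (1) $\Leftrightarrow$ (2).

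The step I expect to require the most attention is the legitimate invocation of Schm\"udgen's theorem: the version stated in \cite{MR1979186} uses the preordering $T + I_a$ rather than the preordering induced on the reduced fibre, and one has to verify that the reduction to the reduced fibre is permissible -- a technicality already flagged in the discussion preceding the proposition (compare Scheiderer \cite{MR2182447}, Section 4, or \cite{MeineDiss}, Lemma 2.3). Once that is in hand, the argument reduces to the two routine facts that zero-dimensional preorderings always satisfy SMP and that SMP respects direct-product decompositions.
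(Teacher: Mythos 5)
Your proof is correct, and it takes a genuinely different route from the paper's. The paper proves $(1)\Rightarrow(2)$ by a one-line appeal to the fact that the strong moment property is inherited by restriction to any closed subvariety (Scheiderer \cite{MR2182447}, Prop.~4.6), and proves $(2)\Rightarrow(1)$ by an induction that adds one component $C_i$ with $B_{C_i}(K\cap C_i(\R))\neq\R$ at a time, each time invoking Schm\"udgen's fibration theorem for the \emph{one-dimensional} projection given by a single bounded function supported on $C_i$. You instead assemble all such functions $g_1,\dots,g_s$ into a single map $\phi\colon C\to\A^s$, obtaining $(1)\Leftrightarrow(2)$ simultaneously from one application of the fibration theorem after analysing all fibres. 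Your fibre analysis is sound: for $a\neq 0$ the fibre is finite or empty (since $g_k(P)=a_k\neq 0$ forces $P$ to lie on $C_k''$ alone, and then $g_l(P)=0$ for $l\neq k$ excludes any second nonzero coordinate), and the fibre over $0$ decomposes as the disjoint closed union $C'\sqcup F'$ with $F'$ finite, so that $\R[\phi^{-1}(0)]\isom\R[C']\times\R[F']$ and the induced preordering splits accordingly with $(T|_{\phi^{-1}(0)})|_{C'}=T|_{C'}$. The paper's version is somewhat leaner in that the forward implication is cited directly and each inductive step only deals with a two-piece fibre; yours trades the induction for a slightly more involved fibre decomposition. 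Your treatment of $(2)\Leftrightarrow(3)$ matches the paper's: both rest on Powers--Scheiderer (the paper invokes their Thm.~2.14 to conclude $T|_{C'}$ is closed in the finest locally convex topology, whence SMP coincides with saturatedness). You have also correctly flagged the reduced-versus-non-reduced fibre subtlety, which the paper disposes of in the discussion immediately following its statement of Schm\"udgen's theorem.
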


\begin{proof}
(1) implies (2) since the moment property is preserved when passing to
a closed subvariety; see \cite{MR2182447}, Prop.~4.6. By Thm.~2.14
in Powers and Scheiderer \cite{MR1823953}, $T|_{C'}$ is closed in
$\R[C']$ (with respect to the finest locally convex
topology). Therefore, saturatedness and the strong moment property are
equivalent for $T|_{C'}$. Finally, (2) implies (1). For if $C_i$ is
any component of $C$ such that $B_{C_i}(K\cap C_i(\R))\neq\R$, put
$C''=C'\cup C_i$. By Lemma \ref{Lemma:VirtualCompactness} (5), we may
choose $f\in\R[C'']$ with $f|_{C_i}\in B_{C_i}(K\cap C_i(\R))\setminus\R$
and $f|_{C'}=0$. The fibres of $f\colon C''\into \A^1_\R$ are the
points of $C_i$ and the curve $C'$, so Schm\"udgen's fibre theorem
implies that $T|_{C''}$ has the strong moment property. Now continue
inductively until $C''=C$.
\end{proof}

Combined with Thm.~\ref{Thm:sosredcurvnonvc}, this proposition gives a complete set of necessary and sufficient conditions for the preordering $\sum\R[C]^2$ of sums of squares on an affine curve $C$ over $\R$ to have the strong moment property. Sufficient conditions for general preorderings on curves can be derived from Prop.~\ref{Prop:SatPORedCurv}.

\medskip
If $V$ is an irreducible affine surface and $K\subset V(\R)$ is such that there exists a non-constant bounded polynomial map $\phi\colon K\into\R^m$, then the fibres of $\phi$ are curves and Schm\"udgen's fibre theorem can be nicely combined with results for curves to say something about the moment problem for $K$. We want to stress here that reducible curves come up most naturally in this context: Even if $V$ and $\phi$ have good properties, one cannot expect all fibres of $\phi$ to be irreducible. Two examples:

\begin{Examples}
\begin{enumerate}
\item Let $T$ be the preordering of $\R[x,y]$ generated by $1-x^2y^2$, and put $K=\sS(T)$. The map $\phi\colon\R^2\into \R$ given by 
$\phi(x,y)=xy$ is obviously bounded on $K$. For $a\in\R$, put $C_a=\phi^{-1}(a)$. All fibres $C_a=\sV(xy-a)$ are hyperbolas for
$0\neq a\in [-1,1]$ and $T|_{C_a}=\sum\R[C_a]^2$. Since psd$=$sos in $\R[C_a]$ (Thm.~\ref{Thm:SOSirrCurvesNVC}), all $T_{C_a}$,
$0\neq a\in [-1,1]$, have property (SMP). The reducible fibre
$C_0=\sV(xy)$ is a pair of crossing lines and the induced preordering
$T|_{C_0}$ is again just $\sum\R[C_0]^2$ so that $T|_{C_0}$ has property (SMP),
too. Hence so does $T$ by Schm\"udgen's fibration theorem.

\item If we restrict to the first quadrant in the previous example,
i.e.~if we consider $T=\PO(x,y,1-xy)$, the same argument shows that
$T$ has property (SMP).  Here, we have to use the fact that $T|_{C_0}$
is the preordering generated by $x$ and $y$: Under the natural
isomorphism $\R[C_0]=\R[x,y]/(xy)\isom\R[t]\times\R[u]$, this means
that $T|_{C_0}$ is the preordering generated by $(t,0)$ and $(0,u)$
which is saturated (see Example \ref{Ex:ReduciblePO} (1)).

On the other hand, if $T=\PO(x+y,1-x^2y^2)$, then $T$ does not have
property (SMP), since the restriction $T|_{C_0}$ is the preordering
generated by $(t,u)$ which is not saturated by the discussion in
Example \ref{Ex:ReduciblePO} (1). It follows that $T|_{C_0}$ cannot
have property (SMP) either.
\end{enumerate}
\end{Examples}

{\linespread{1}
}
\end{document}